\documentclass{amsart}
\usepackage[utf8]{inputenc}
\usepackage{microtype}
\usepackage{graphicx,enumerate,tikz-cd}
\usepackage [english]{babel}
\usepackage{pinlabel}
\usepackage{caption,subcaption}
\usepackage{amsmath,amssymb,amsthm,setspace}
\usepackage{accents}
\usepackage{hyperref}
\usepackage{csquotes}
\usepackage{mathrsfs} 
\newcommand{\Zm}[1]{\Z/#1\Z}
\newcommand{\symm}[1]{S_{#1}}

\title{Topological complexity of unordered configuration spaces of certain graphs}

\author{Steven Scheirer}

\address{Department of Mathematics and Computer Science\\Ashland University, Ashland, OH  44805}
\curraddr{Department of Mathematics and Statistics, Carleton College, Northfield MN 55057}
\email{sscheirer@carleton.edu}

\newcommand{\Z}{\mathbb{Z}}

\newcommand{\TC}{\mathrm{TC}}

\newcommand{\C}[2]{C^{#1}({#2})}
\newcommand{\UC}[2]{\mathrm{UC}^{#1}({#2})}
\newcommand{\D}[2]{D^{#1}({#2})}
\newcommand{\UD}[2]{\mathrm{UD}^{#1}({#2})}
\newcommand{\sym}[1]{S_{#1}}
\newcommand{\cycle}{\mathscr{C}}

\newcommand{\image}{\mathrm{image}}

\newcommand{\w}{\widetilde}

\numberwithin{equation}{section}
\newtheorem{thm}[equation]{Theorem}

\newtheorem{lemma}[equation]{Lemma}
\newtheorem{defn}[equation]{Definition}

\newtheorem{cor}[equation]{Corollary}

\def\from{\colon\thinspace}

\begin{document}

\keywords{Topological complexity, configuration spaces, graphs}
\subjclass[2010]{55R80, 57M15}
\begin{abstract} 
The unordered configuration space of $n$ points on a graph $\Gamma,$ denoted here by $\UC{n}{\Gamma},$ can be viewed as the space of all configurations of $n$ unlabeled robots on a system of one-dimensional tracks, which is interpreted as a graph $\Gamma.$  The topology of these spaces is related to the number of vertices of degree greater than 2; this number is denoted by $m(\Gamma).$  We discuss a combinatorial approach to compute the topological complexity of a ``discretized" version of this space, $\UD{n}{\Gamma},$ and give results for certain classes of graphs.  In the first case, we show that for a large class of graphs, as long as the number of robots is at least $2m(\Gamma)$, then $\TC(\UD{n}{\Gamma})=2m(\Gamma)+1.$  In the second, we show that as long as the number of robots is at most half the number of vertex-disjoint cycles in $\Gamma,$ we have $\TC(\UD{n}{\Gamma})=2n+1.$
\end{abstract}

\maketitle

\section{Introduction}\label{sec:intro}
For any path-connected space $X,$ let $P(X)$ denote the space of all continuous paths in $X;$ an element of $P(X)$ is a map $\sigma\from I\to X,$ where $I$ denotes the unit interval.  This gives a fibration $p\from P(X)\to X\times X$ which sends a path $\sigma$ to its endpoints:  $p(\sigma)=(\sigma(0),\sigma(1)).$  A (not necessarily continuous) section $s$ of this fibration takes a pair of points as input and produces a path between those points.  That is, such a section can be viewed as a rule which assigns a path between any two points in $X.$  If $s$ is continuous at a point $(x,y)\in X\times X,$ then whenever $(x',y')$ is obtained by slightly perturbing the point $(x,y),$ the path $s(x',y')$ only varies slightly from the path $s(x,y).$  Unless the space $X$ is contractible, it is impossible to find a section which is continuous over all of $X\times X.$  The topological complexity of $X,$ introduced by Farber and denoted by $\TC(X),$ is a measure of this inability to find a globally continuous section.   Specifically, we have the following definition:

\begin{defn}{\rm\cite{FarberTC}}\label{defn:TC}
For any path-connected space $X,$ the topological complexity of $X,$ denoted by $\TC(X),$ is the smallest integer $k$ such that there are open sets $U_1,\dots,U_k\subset X\times X$ which cover $X\times X,$ and continuous sections $s_i\from U_i\to P(X).$  If there is no such $k,$ let $\TC(X)=\infty.$
\end{defn}

Some authors prefer a ``reduced" version of topological complexity, which is one less than the number $k$ in Definition \ref{defn:TC}.  Viewing the sections $s_i$ as ``continuous rules," we can interpret the topological complexity of $X$ as the minimum number of continuous rules needed to describe how to move between any two points in $X.$  In light of this interpretation, it is often desirable to view the space $X$ as the configuration space of a robot or several robots, so that $\TC(X)$ is related to the problem of moving the robot(s) from one configuration to another.  One such example is the case in which $X$ is the space of configurations of $n$ robots which move along a system of one-dimensional tracks.  In this setting, a point in $X$ is a configuration of all $n$ robots, and studying the topological complexity of $X$ addresses the problem of moving all robots from one configuration to another.  The tracks are interpreted as a graph $\Gamma$ and the robots are then $n$ distinct points on $\Gamma.$  

Specifically, for any graph $\Gamma,$ let $\C{n}{\Gamma}$ denote the space of all configurations of $n$ distinct points on $\Gamma.$  That is, 
\[
\C{n}{\Gamma}=\overbrace{\Gamma\times\cdots\times\Gamma}^{n\ \text{times}}-\Delta,
\]
where $\Delta=\{(x_1,\dots,x_n):x_i=x_j\ \text{for some } i\ne j\}\subset\Gamma\times\cdots\times\Gamma.$  The space $\C{n}{\Gamma}$ will be called the \emph{ordered topological configuration space}.  We can view the points $x_1$ through $x_n$ as the locations of $n$ distinguishable robots on $\Gamma.$  In this space, the robots (viewed as infinitesimally small) are permitted to move arbitrarily close to one another.  Aside from the real-world impracticality of this, the space $\C{n}{\Gamma}$ faces another downfall.  The space $\Gamma\times\cdots\times\Gamma$ inherits a closed cell-structure from $\Gamma.$ The 0-cells of $\Gamma$ are the vertices, and the open 1-cells are the interiors of the edges.  Note that when we refer to an ``edge'' here, we mean the closure of a 1-cell in $\Gamma.$  We will refer to the interior of an edge as an ``open edge.'' Open cells in $\Gamma\times\cdots\times\Gamma$ are then products of the form $c=c_1\times\cdots\times c_n,$ where each $c_i$ is either a vertex or an open edge of $\Gamma;$ the dimension of such a cell is the number of edges that appear in the product.  However, by removing $\Delta,$ we lose this structure.  This can be addressed by working with a ``discretized'' version of the space, namely
\[
\D{n}{\Gamma}=\overbrace{\Gamma\times\cdots\times\Gamma}^{n\ \text{times}}-\w{\Delta},
\]
where $\w{\Delta}=\{c_1\times\cdots\times c_n:\overline{c}_i\cap\overline{c}_j\ne\emptyset\ \text{for some } i\ne j\}.$  Again, here each $c_i$ is either a vertex or open edge of $\Gamma.$  The space $\D{n}{\Gamma}$ will be called the \emph{ordered discrete configuration space}.  Since we remove entire open cells rather than just the subspace $\Delta,$ the space $\D{n}{\Gamma}$ retains the cell structure from $\Gamma\times\cdots\times\Gamma.$  Points in $\D{n}{\Gamma}$ are again viewed as the locations of $n$ robots on $\Gamma,$ but now any two robots must be separated by at least a full open edge of the graph $\Gamma.$  Open cells in $\D{n}{\Gamma}$ are now of the form $c=c_1\times\cdots\times c_n$ with the property that $\overline{c}_i\cap\overline{c}_j=\emptyset$ whenever $i\ne j.$

In both $\C{n}{\Gamma}$ and $\D{n}{\Gamma},$ the configurations are ordered in the sense that the specific robot at each specified location on $\Gamma$ is of importance.  That is, we view the robots as being labeled.  If we wish to ignore the labels of the robots, we can work with the \emph{unordered} configuration spaces, which are obtained by factoring out the action of the symmetric group $\sym{n}:$
\[ 
\UC{n}{\Gamma}=\C{n}{\Gamma}/\sym{n} \quad\quad \UD{n}{\Gamma}=\D{n}{\Gamma}/\sym{n}.
\]
The spaces $\UC{n}{\Gamma}$ and $\UD{n}{\Gamma}$ will be called the \emph{unordered} topological and discrete configuration spaces, respectively.  Since the order of the robots on $\Gamma$ is irrelevant in these spaces, a point in $\UC{n}{\Gamma}$ is viewed as a collection $\{x_1,\dots,x_n\}$ of points on $\Gamma$ with $x_i\ne x_j$ when $i\ne j.$  Likewise, points in $\UD{n}{\Gamma}$ are viewed as collections $\{x_1,\dots,x_n\}$ of points on $\Gamma$ such that $x_i$ and $x_j$ are separated by at least a full open edge of $\Gamma$ whenever $i\ne j.$  Cells in $\UD{n}{\Gamma}$ are viewed as collections $c=\{c_1,\dots,c_n\}$ where each $c_i$ is either a vertex or an open edge of $\Gamma,$ and $\overline{c}_i\cap\overline{c_j}=\emptyset$ whenever $i\ne j.$  Again, the dimension of such a cell is the number of edges that appear in the collection.

One may wonder if there is a relationship between the (un)ordered topological configuration space and the (un)ordered discrete configuration space.  Certainly in general, $\C{n}{\Gamma}$ and $\UC{n}{\Gamma}$ can be quite different than $\D{n}{\Gamma}$ and $\UD{n}{\Gamma},$ respectively, but Abrams shows that under suitable hypotheses on the subdivision of $\Gamma,$ the topological configuration spaces deformation retract onto the discrete configuration spaces.

\begin{thm}{\rm\cite{AbramsThesis,KimKoPark}}\label{thm:SuffSubdivision}
Let $\Gamma$ be a graph with at least $n$ vertices and suppose $\Gamma$ has the following properties:
\begin{enumerate}
\item Each path between distinct vertices of degree not equal to 2 in $\Gamma$ contains at least $n-1$ edges.
\item Each loop at a vertex in $\Gamma$ which is not homotopic to a constant map contains at least $n+1$ edges.
\end{enumerate}
Then $\C{n}{\Gamma}$ and $\UC{n}{\Gamma}$ deformation retract onto $\D{n}{\Gamma}$ and $\UD{n}{\Gamma},$ respectively.
\end{thm}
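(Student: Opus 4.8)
The plan is to construct an $\sym{n}$-equivariant deformation retraction of $\C{n}{\Gamma}$ onto $\D{n}{\Gamma}$; since the $\sym{n}$-action on $\C{n}{\Gamma}$ is free (no two coordinates ever coincide) and $\D{n}{\Gamma}$ is $\sym{n}$-invariant, any such equivariant retraction descends to a deformation retraction of the quotient $\UC{n}{\Gamma}$ onto $\UD{n}{\Gamma}$. Thus it suffices to treat the ordered case. First I would fix the path metric on $\Gamma$ in which every edge has length $1$, inducing a product metric on $\Gamma\times\cdots\times\Gamma$ and hence on $\C{n}{\Gamma}$ and $\D{n}{\Gamma}$.

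The subspace to be removed from $\C{n}{\Gamma}$ is the ``forbidden collar'' $\w{\Delta}\setminus\Delta$, where $\w{\Delta}$ is the union of the bad cells $c_1\times\cdots\times c_n$ for which $\overline{c}_i\cap\overline{c}_j\ne\emptyset$ for some $i\ne j$. The retraction should push any configuration lying in $\w{\Delta}\setminus\Delta$ outward until the closures of the cells occupied by distinct robots become pairwise disjoint, i.e.\ until the configuration lands in $\D{n}{\Gamma}$. Concretely, I would define the homotopy by relieving one ``conflict'' at a time: whenever two robots occupy cells whose closures intersect but whose locations differ, slide them apart along the edges of $\Gamma$ toward distinct vertices, normalizing so that configurations already in $\D{n}{\Gamma}$ are fixed throughout.

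The heart of the argument is verifying that this motion is well defined, continuous, $\sym{n}$-equivariant, and collision-free, and this is exactly where the two subdivision hypotheses enter. If several robots are crowded along a segment joining two vertices of degree $\ne 2$, condition (1) guarantees that segment contains at least $n-1$ edges, hence at least $n$ vertices counting endpoints, so there is always enough room to spread all of the crowded robots onto distinct vertices without any of them passing through another. If the crowding occurs around an essential cycle, condition (2) guarantees the cycle has at least $n+1$ edges, so a robot can be slid to a free vertex of the cycle without ``wrapping around'' and meeting a robot on the far side. Together these ensure the spreading motion never forces a collision, so the homotopy stays inside $\C{n}{\Gamma}$.

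I expect the main obstacle to be the global consistency of these local spreading moves: relieving the conflict between one pair of robots must not manufacture a new conflict with a third robot, and the simultaneous motions of all robots must be assembled into a single continuous (and equivariant) homotopy rather than a sequence of discrete choices. The subdivision conditions are precisely calibrated to supply the ``elbow room'' that makes this possible, but converting the qualitative statement ``there is enough room'' into an explicit, continuous, collision-free flow is the delicate point, and is presumably where the refinement of Kim--Ko--Park to Abrams' original argument becomes necessary.
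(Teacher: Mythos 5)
This theorem is not proved in the paper at all: it is quoted from Abrams' thesis and from Kim--Ko--Park, so there is no internal argument to compare your sketch against. Judged on its own terms, your proposal correctly identifies the standard strategy (Abrams' original one): build an $\sym{n}$-equivariant deformation retraction of $\C{n}{\Gamma}$ onto $\D{n}{\Gamma}$ and let it descend to the free quotient, with the two subdivision hypotheses supplying the ``elbow room'' needed to spread crowded robots onto disjoint closed cells. The reduction of the unordered case to the ordered case via equivariance is correct, and your reading of where conditions (1) and (2) enter is essentially right.

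However, there is a genuine gap, and you have named it yourself: the entire mathematical content of the theorem is the construction of a single well-defined, continuous, collision-free, equivariant homotopy, and your proposal defers exactly that step. ``Relieve one conflict at a time'' is not yet a map $\C{n}{\Gamma}\times I\to\C{n}{\Gamma}$: the order in which conflicts are resolved is a discrete choice that varies discontinuously with the configuration, resolving one pair can create a new conflict with a third robot, and configurations on the frontier between ``conflicted'' and ``unconflicted'' must be handled so that the homotopy is continuous there and fixes $\D{n}{\Gamma}$ pointwise. Abrams' actual argument (and the corrections and refinements in the later literature, which is why the second citation exists) consists precisely of an explicit, carefully ordered flow with verified continuity; the counting observations you make about conditions (1) and (2) are necessary inputs but nowhere near sufficient. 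As written, the proposal is a plausible plan with the decisive construction missing, so it does not constitute a proof.
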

A graph that satisfies the conditions in Theorem \ref{thm:SuffSubdivision} is called \emph{sufficiently subdivided for $n$}.  Thus, provided the graph $\Gamma$ is sufficiently subdivided for $n,$ the topological features of the spaces $\C{n}{\Gamma}$ and $\UC{n}{\Gamma}$ can be studied using the cell structure of $\D{n}{\Gamma}$ and $\UD{n}{\Gamma},$ respectively.  From here on, we will assume all graphs are sufficiently subdivided.

It is not surprising that many topological features of the configuration spaces of graphs are related to the number of vertices of degree at least three.  These vertices are called \emph{essential} vertices, and the number of essential vertices of $\Gamma$ is denoted by $m(\Gamma).$  The theorems listed below give the main known results regarding the topological complexity of graph configurations spaces in both the ordered and the unordered contexts.  The first result, due to Farber, addresses ordered configuration spaces of trees:

\begin{thm}\rm{\cite{FarberCollisionFree, FarberConfigSpacesMotionPlanning}}\label{thm:FarberTC}
Let $\Gamma$ be a tree with at least one essential vertex and let $n$ be an integer satisfying $n\ge2m(\Gamma).$  If $n=2,$ assume further that $\Gamma$ is not homeomorphic to the letter $Y.$  Then $\TC(\C{n}{\Gamma})=2\cdot m(\Gamma)+1.$
\end{thm}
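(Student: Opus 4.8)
The plan is to prove the two inequalities $\TC(\C{n}{\Gamma})\le 2m(\Gamma)+1$ and $\TC(\C{n}{\Gamma})\ge 2m(\Gamma)+1$ separately, combining a dimension estimate for the upper bound with a zero-divisor cup-length computation for the lower bound. Throughout I would work with the discrete model, since by Theorem~\ref{thm:SuffSubdivision} we may assume $\C{n}{\Gamma}\simeq\D{n}{\Gamma}$.

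For the upper bound I would first replace $\C{n}{\Gamma}$ by a homotopy-equivalent complex of small dimension. The key structural input is that the discrete configuration space of a tree collapses (via discrete Morse theory) onto a complex of dimension at most $m(\Gamma)$: a robot can only maneuver past another at an essential vertex, so at most $m(\Gamma)$ of them can move ``independently'' at once, and critical cells appear only up to dimension $m(\Gamma)$. Granting this, Farber's general dimensional bound $\TC(X)\le 2\dim X+1$ yields $\TC(\C{n}{\Gamma})\le 2m(\Gamma)+1$ at once. This half needs no lower bound on $n$ beyond what makes the discrete model valid.

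For the lower bound I would use $\TC(X)\ge \operatorname{zcl}(X)+1$, where $\operatorname{zcl}(X)$ is the cup length of the ideal of zero-divisors in $H^*(X\times X)$ (classes restricting to $0$ on the diagonal), and exhibit $2m(\Gamma)$ zero-divisors with nonzero product. The model case is a single essential vertex: when $\Gamma$ is a star (and we are not in the excluded case $n=2$, $\Gamma=Y$), the connected $1$-complex $\C{n}{\Gamma}$ is homotopy equivalent to a wedge of $k\ge 2$ circles, so $H^1$ contains independent classes $a,b$ with $ab=0$; a direct computation gives $\bar a\,\bar b = b\otimes a - a\otimes b\ne 0$ for the zero-divisors $\bar a=1\otimes a-a\otimes 1$ and $\bar b=1\otimes b-b\otimes 1$, whence $\operatorname{zcl}\ge 2$ and $\TC\ge 3=2\cdot 1+1$. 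For general $\Gamma$ I would localize this picture at each essential vertex: choose disjoint star-neighborhoods $N_1,\dots,N_{m(\Gamma)}$ and distribute the robots so that each $N_i$ carries two independent local loops, parking the remaining robots as stationary blockers. This produces classes $a_i,b_i\in H^1(\C{n}{\Gamma})$ for $i=1,\dots,m(\Gamma)$, and the claim is that $\prod_i \bar a_i\,\bar b_i\ne 0$, giving $\operatorname{zcl}(\C{n}{\Gamma})\ge 2m(\Gamma)$ and hence the lower bound. It is precisely here that the hypothesis $n\ge 2m(\Gamma)$ enters: it guarantees enough robots to realize two independent loops at every essential vertex simultaneously, while the case $n=2$, $\Gamma=Y$ is exactly where only a single loop survives (the space is a circle), so it is rightly excluded.

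The main obstacle is the lower bound, and specifically verifying that $\prod_i \bar a_i\,\bar b_i$ is genuinely nonzero -- equivalently, that the classes from distinct essential vertices are cohomologically independent and multiply nontrivially across the vertices. I would establish this in the cubical model $\D{n}{\Gamma}$: because the neighborhoods $N_i$ are disjoint, the local contributions occupy independent coordinate directions of the cube complex, so the cross products do not collapse and the relevant localization is injective on cohomology. The delicate combinatorial point is the careful use of the blocker robots, namely checking that they really do endow each local star with a rank-$\ge 2$ free fundamental group; this is exactly the property that fails for $n=2$, $\Gamma=Y$, and confirming it uniformly under $n\ge 2m(\Gamma)$ is where the technical weight of the argument lies.
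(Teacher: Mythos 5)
The paper does not actually prove this statement (it is quoted from Farber), so the relevant benchmark is Farber's original argument and the paper's closely parallel proof of Theorem \ref{thm:VertexDisjointOrdered}. Your template --- a dimension collapse for the upper bound and a zero-divisor cup-length of $2m(\Gamma)$ for the lower bound, with classes localized at essential vertices --- is the right one, and the upper-bound half is fine. The gap is in the lower bound, at exactly the point you flag as carrying the technical weight. Your plan needs each essential vertex $v_i$, with the two robots allotted to it, to contribute \emph{two} independent classes $a_i,b_i\in H^1$ supported in a star neighborhood, i.e.\ a local configuration space with first Betti number at least $2$. But if $\deg(v_i)=3$ and $n=2m(\Gamma)$ (the minimal case the hypothesis allows), the ordered configuration space of two robots on a degree-$3$ star is homotopy equivalent to a single circle, so only one local class $a_i$ exists, and $\overline{a}_i^{\,2}=0$ (the square of a degree-one zero-divisor vanishes --- this is why $\TC(S^1)=2$). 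The exclusion of $(n,\Gamma)=(2,Y)$ only removes the case $m(\Gamma)=1$; for a tree with two degree-$3$ essential vertices and $n=4$ the theorem still asserts $\TC(\C{4}{\Gamma})=5$, yet your construction yields at most $m(\Gamma)$ usable zero-divisor factors, not $2m(\Gamma)$. So the assertion that $n\ge 2m(\Gamma)$ ``guarantees two independent loops at every essential vertex'' is false in general.

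The missing idea is to exploit the \emph{labels} of the robots rather than only the local topology of each star: the second family of classes must come from a different assignment of labeled robots (or of branches) to the essential vertices. This is precisely the device in the paper's proof of Theorem \ref{thm:VertexDisjointOrdered}: two maps $\w{f}_\sigma$ and $\w{f}_{\sigma'}$ indexed by two permutations produce two top-degree homology classes $y$ and $y'$, and the product of the $2n$ zero-divisors is shown to be nonzero by pairing it explicitly against $y\otimes y'$. Relatedly, your sketch leaves the nonvanishing of $\prod_i\overline{a}_i\,\overline{b}_i$ resting on an unproved claim that ``localization is injective on cohomology''; the cohomology of $\D{n}{\Gamma}$ is not a tensor product of local pieces, so some evaluation of the product against explicit degree-$m(\Gamma)$ homology classes (images of fundamental classes of products of local cycles under maps into $\D{n}{\Gamma}$) is needed to close the argument, and that evaluation is where the cross-vertex independence is actually verified.
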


In the exceptional case in which $\Gamma$ is homeomorphic to the letter $Y$ and $n=2,$ it is well known that $\C{n}{\Gamma}$ is homotopy equivalent the circle $S^1$ and thus $\TC(\C{n}{\Gamma})=\TC(S^1)=2.$  The author extended this result to include the unordered configuration spaces as well as values of $n$ which are less than $2 m(\Gamma)$:

\begin{thm}{\rm\cite{ScheirerTrees}}\label{thm:TCofTrees}
Let $\Gamma$ be a tree with at least one essential vertex.  Provided $n$ satisfies some technical restrictions which depend on properties of $\Gamma,$ we have 
\[
\TC(\C{n}{\Gamma})=\TC(\UC{n}{\Gamma})=2K+1,
\]
where $K=\min\left\{\left\lfloor\frac{n}{2}\right\rfloor,m(\Gamma)\right\}.$
\end{thm}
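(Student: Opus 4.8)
The plan is to establish matching upper and lower bounds, working throughout with the discrete models $\D{n}{\Gamma}$ and $\UD{n}{\Gamma}$; since $\Gamma$ is sufficiently subdivided, Theorem~\ref{thm:SuffSubdivision} identifies their homotopy types, and hence their topological complexities, with those of $\C{n}{\Gamma}$ and $\UC{n}{\Gamma}$. The ordered computation for $n \ge 2m(\Gamma)$ is already Theorem~\ref{thm:FarberTC}, so the genuinely new content is the unordered space together with the regime $n < 2m(\Gamma)$, where $K = \lfloor n/2\rfloor$ and the number of robots, rather than the number of essential vertices, is the binding constraint.

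For the upper bound I would show that $\D{n}{\Gamma}$ and $\UD{n}{\Gamma}$ each deformation retract onto a CW complex of dimension $K$. The combinatorial input is that the dimension of a cell equals the number of robots simultaneously traversing edges, that two such edges can move ``independently'' only when they are anchored at distinct essential vertices, and that each moving edge consumes robots; an explicit collapse (a discrete Morse argument of the sort available for tree configuration spaces, cf.\ \cite{AbramsThesis}) then trims away all cells above dimension $K = \min\{\lfloor n/2\rfloor, m(\Gamma)\}$. The standard estimate $\TC(X) \le 2\dim X + 1$ yields $\TC(\C{n}{\Gamma}), \TC(\UC{n}{\Gamma}) \le 2K+1$ simultaneously.

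The crux is the lower bound $\TC \ge 2K+1$, which I would derive from the zero-divisor cup length: $\TC(X) \ge \mathrm{zcl}(X) + 1$, where $\mathrm{zcl}(X)$ is the length of the longest nonzero product of classes in $\ker\!\big(H^*(X \times X) \to H^*(X)\big)$. The goal is thus to manufacture $2K$ zero-divisors with nonzero product. I would select $K$ essential vertices (possible as $K \le m(\Gamma)$), devote a pair of robots to each, and park the remaining $n - 2K$ robots on fixed vertices well away from the chosen ones; with $2K \le n$ there are enough robots. This produces a subcomplex that splits as a $K$-fold product, and the task reduces to showing each factor contributes \emph{two} zero-divisors rather than one. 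Because zero-divisor cup length adds over products, $\mathrm{zcl} \ge 2K$ would follow, and pulling the classes back along the retraction onto this subcomplex (or verifying directly that the relevant cup product survives in $H^*(\UD{n}{\Gamma} \times \UD{n}{\Gamma})$) gives the bound. For the ordered space one can invoke Theorem~\ref{thm:FarberTC} when $n \ge 2m(\Gamma)$ and adapt the same construction otherwise; to pass between the ordered and unordered settings I would use the transfer isomorphism $H^*(\UD{n}{\Gamma};\Q) \cong H^*(\D{n}{\Gamma};\Q)^{\sym{n}}$ to descend symmetric classes.

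The main obstacle is precisely the count of two classes per active vertex. The naive motion — a single pair of robots cycling around one essential vertex — contributes only \emph{one} zero-divisor (indeed the configuration space of $2$ points on the letter $Y$ is homotopy equivalent to $S^1$, the source of the $n=2$ exception in Theorem~\ref{thm:FarberTC}), and $K$ such circles assemble only into a torus with $\TC(T^K) = K+1$, far short of $2K+1$. Producing the extra $K$ classes requires exploiting the full graph near each vertex — additional branches, or robots that traverse the edges joining essential vertices — so that each factor is homotopy equivalent to a wedge of at least two circles, and then checking that these $2K$ one-dimensional classes remain linearly independent and multiply nontrivially in $H^*\!\big(\UD{n}{\Gamma} \times \UD{n}{\Gamma}\big)$ without the parked robots ever obstructing the moving pairs. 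It is this independence, together with the upper bound forcing the homotopical dimension down to $K$, that pins the answer to exactly $2K+1$, and it is exactly here that the unordered setting and the technical restrictions on $n$ must be used.
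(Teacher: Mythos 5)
Your overall architecture coincides with the one this paper attributes to \cite{ScheirerTrees} and then generalizes in Sections 3--4: the statement is only cited here, not proved, but the intended argument is exactly a discrete Morse-theoretic collapse to dimension $K$ for the upper bound (cf.\ Theorems \ref{thm:UDnUpperBound} and \ref{thm:TCUDnUB}) plus a zero-divisor cup-length computation producing $2K$ classes, two per chosen essential vertex, for the lower bound; and you correctly isolate the central difficulty, namely that a single pair of robots at a degree-$3$ vertex yields only a circle and hence only one zero-divisor, which is precisely what the ``technical restrictions on $n$'' are designed to circumvent. Where your sketch diverges is in how the $2K$ classes are produced and multiplied: you propose restricting to a subcomplex that splits as a $K$-fold product, invoking additivity of zero-divisor cup length, and ``pulling back along the retraction.'' No such retraction exists in general, and a nonzero product on a subcomplex does not by itself force a nonzero product on the ambient space, so this step is a genuine gap as written; the method this paper develops instead defines global cellular cocycles $\phi_{[c]}$ indexed by equivalence classes of $1$-cells (Definition \ref{defn:EquivClassesTree}), computes their cup products via least upper bounds in a partial order (Lemma \ref{lemma:Products}), and detects nonvanishing by evaluating against critical cells that are alone in their equivalence class up to redundant cells (Lemma \ref{lemma:DistinctHomologyClasses}) --- concretely, the two classes at each essential vertex correspond to placing the moving edge in direction $2$ versus direction $3$, which is why degree-$3$ vertices cause trouble. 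Your parenthetical alternative of verifying the product directly in $H^*(\UD{n}{\Gamma}\times\UD{n}{\Gamma})$ is the viable route, but it amounts to rebuilding that machinery. Finally, note that the relevant computations are carried out with $\Zm{2}$ coefficients, and the transfer isomorphism over $\Q$ you invoke to pass between ordered and unordered spaces is not how the two settings are related here: the ordered statement is obtained as a byproduct of the unordered computation rather than deduced from it by averaging.
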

The complete description of the restrictions on $n$ mentioned in the above result is given in \cite{ScheirerTrees} and eliminates at most finitely many values of $n$ for any fixed tree $\Gamma.$  In fact, if the tree $\Gamma$ does not contain any vertices of degree 3, then there are no restrictions, and Theorem \ref{thm:TCofTrees} determines the topological complexity for all values of $n,$ provided the configuration spaces are connected.  It is worth noting the method for obtaining Theorem \ref{thm:TCofTrees} focuses primarily on the unordered spaces; the statement concerning the ordered spaces is essentially a byproduct. 

L\"utgehetmann and Recio-Mitter extended Farber's result concerning the ordered spaces to include all values of $n$:

\begin{thm}{\rm\cite{RecioMitter}}
\label{thm:RecioMitter}
Let $\Gamma$ be a tree with at least one essential vertex.  For $n=2,$ assume $\Gamma$ is not homeomorphic to the letter $Y$.  Then, 
\[
\TC(\C{n}{\Gamma})=2K+1,
\]
where $K=\min\left\{\left\lfloor\frac{n}{2}\right\rfloor,m(\Gamma)\right\}.$
\end{thm}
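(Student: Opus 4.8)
The plan is to establish matching upper and lower bounds, with the value $2K+1$ where $K=\min\{\lfloor n/2\rfloor, m(\Gamma)\}$. For the upper bound I would use that, since $\Gamma$ is a tree, $\C{n}{\Gamma}$ is homotopy equivalent to the cube complex $\D{n}{\Gamma}$ by Theorem \ref{thm:SuffSubdivision}, which (via the discrete Morse theory of Farley--Sabalka, building on Ghrist) is homotopy equivalent to a CW complex of dimension $K$. The point is that each critical cell records a set of essential vertices at which a robot is executing a branched move, and such a move locally consumes a second robot as a blocker, so at most $\lfloor n/2\rfloor$ vertices can be simultaneously active, while of course no more than $m(\Gamma)$ can be. Since $\TC$ is a homotopy invariant and $\TC(X)\le 2\dim X+1$ for a CW complex $X$, this yields $\TC(\C{n}{\Gamma})\le 2K+1$ immediately. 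I would also record that $\C{n}{\Gamma}$ is aspherical, so the cohomological obstructions below are genuine and one expects them to be sharp.

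For the lower bound I would use the zero-divisor cup-length estimate $\TC(X)\ge \mathrm{zcl}(X;\mathbb{F})+1$ over a field $\mathbb{F}$, with the goal of producing $2K$ zero-divisors whose product is nonzero. The guiding cohomological model is a product of $K$ graphs each with first Betti number at least $2$: for a one-dimensional $G$ and independent $a,b\in H^1(G)$ one has $ab=0$, so the zero-divisors $\bar a=1\otimes a-a\otimes 1$ and $\bar b$ satisfy $\bar a\,\bar b=b\otimes a-a\otimes b\ne 0$ (while $\bar a^2=0$ for degree reasons), giving $\mathrm{zcl}(G)=2$; the Künneth theorem then gives $\mathrm{zcl}$ of the $K$-fold product equal to $2K$. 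Concretely I would select $K$ essential vertices (possible as $K\le m(\Gamma)$), devote two robots to the vicinity of each while parking the remaining $n-2K\ge 0$ robots at fixed locations, and extract $2K$ classes $a_1,\dots,a_{2K}\in H^1(\C{n}{\Gamma})$. The Farley--Sabalka Morse theory supplies explicit reduced-cycle representatives and a combinatorial rule for cup products, which is the tool I would use to verify that $\bar a_1\cdots\bar a_{2K}\ne 0$ in $H^{2K}(\C{n}{\Gamma}\times\C{n}{\Gamma})$, equivalently that a $K$-fold product $a_{i_1}\cdots a_{i_K}$ is a nonzero top class in $H^{K}(\C{n}{\Gamma})$ with the correct factored structure.

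The hard part will be the lower bound, and within it two linked difficulties. First is the nonvanishing of the global cup product: because $\C{n}{\Gamma}$ is connected and not literally a product, the naive picture of one independent loop per degree-three vertex already fails to supply enough classes when $n=2m(\Gamma)$, so one must also incorporate loops in which robots travel \emph{between} essential vertices and prove that these, together with the local swap classes, yield a nonzero $2K$-fold zero-divisor product. I would attempt this either by exhibiting a subspace onto which $H^*$ restricts isomorphically to the product model above, or by pairing the product directly against an explicit $2K$-cycle in $\C{n}{\Gamma}\times\C{n}{\Gamma}$. Second is the uniform bookkeeping across all admissible $n$: verifying that for a tree with $m(\Gamma)\ge 1$ essential vertices the required $2K$ independent classes can always be realized, including the small-robot regime $n<2m(\Gamma)$ where only $\lfloor n/2\rfloor$ vertices are activated and one must group essential vertices into $K$ regions each of branching complexity at least two. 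This allocation degenerates precisely at the excluded pair $(\Gamma,n)=(Y,2)$, where a single degree-three vertex with two robots supports only one loop and $\C{2}{Y}\simeq S^1$ forces $\TC=2$ rather than $3$; pinning down that this is the unique obstruction is the combinatorial crux of the argument.
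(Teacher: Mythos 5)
This statement is not proved in the paper at all: Theorem \ref{thm:RecioMitter} is quoted from L\"utgehetmann and Recio-Mitter \cite{RecioMitter}, so there is no internal proof to compare against. Judged on its own terms, your proposal correctly identifies the standard strategy (homotopy dimension for the upper bound, zero-divisor cup-length for the lower bound, with the model computation $\mathrm{zcl}=2K$ for a product of $K$ graphs of first Betti number at least $2$), but it is a plan rather than a proof, and the two steps you defer are exactly the substantive content of the theorem.

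First, the upper bound. The dimension bound you invoke --- that the configuration space deformation retracts onto a complex of dimension $K=\min\{\lfloor n/2\rfloor, m(\Gamma)\}$ --- is established by the Farley--Sabalka discrete gradient vector field on the \emph{unordered} space $\UD{n}{\Gamma}$ (Theorems \ref{thm:ClassificationOfCells} and \ref{thm:UDnUpperBound} here). Your theorem concerns the \emph{ordered} space $\C{n}{\Gamma}\simeq\D{n}{\Gamma}$, whose cells are ordered tuples; the vector field does not transfer for free, and the ``each active vertex consumes a blocker'' count is an argument about unordered critical cells. One can repair this (e.g.\ via the finite covering $\D{n}{\Gamma}\to\UD{n}{\Gamma}$, asphericity, and cohomological versus geometric dimension, or by building an ordered Morse function), but some argument is required and you supply none. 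Note that for $n\ge 2m(\Gamma)$ the bound is already Theorem \ref{thm:FarberTC}; the new regime is precisely $n<2m(\Gamma)$, where $K=\lfloor n/2\rfloor$. Second, the lower bound. You candidly label the nonvanishing of the $2K$-fold product of zero-divisors as ``the hard part'' and offer two possible routes without carrying out either. The verification genuinely requires, for each of the $K$ chosen essential vertices, two independent classes in $H^1$ supported by two robots near that vertex, together with a pairing against an explicit $2K$-dimensional homology class of $\C{n}{\Gamma}\times\C{n}{\Gamma}$ showing that exactly one monomial survives (compare the evaluation argument in the proof of Theorem \ref{thm:VertexDisjointOrdered}, where disjointness of the supporting cycles is what kills the cross terms; at a single essential vertex of a tree the two loops are \emph{not} disjointly supported, so the analogous cancellation analysis is more delicate). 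Until that computation is done --- including the allocation of robots when $n<2m(\Gamma)$ and the isolation of the exceptional case $(\Gamma,n)=(Y,2)$ --- the proposal does not establish the theorem.
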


The results above all address the case in which the underlying graph is a tree.  L\"utgehetmann and Recio-Mitter also study the topological complexity of ordered configuration spaces of graphs which are not trees.  Their results address two classes of graphs: fully articulated graphs and banana graphs.  A graph is fully articulated if removing any essential vertex disconnects the graph.  The class of fully articulated graphs contains the class of trees as a subset.  The banana graph $B_k$ consists of two vertices $x$ and $y,$ and $k$ edges each having $x$ and $y$ as endpoints.  Note that no banana graph is fully-articulated.

\begin{thm}{\rm\cite{RecioMitter}}\label{thm:TCFullyArticulated}
If $\Gamma$ is fully articulated and $\Gamma$ is not homeomorphic to the letter $Y,$ and $n\ge 2m(\Gamma),$ then 
\[
\TC(\C{n}{\Gamma})=2m(\Gamma)+1.
\]
\end{thm}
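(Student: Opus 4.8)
The plan is to prove the two inequalities $\TC(\C{n}{\Gamma}) \ge 2m(\Gamma)+1$ and $\TC(\C{n}{\Gamma}) \le 2m(\Gamma)+1$ separately. Since every graph is assumed sufficiently subdivided, Theorem \ref{thm:SuffSubdivision} lets me replace $\C{n}{\Gamma}$ by the homotopy equivalent finite CW complex $\D{n}{\Gamma}$, so that cohomology and cell dimensions are available as combinatorial data throughout. Write $m=m(\Gamma)$.

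For the lower bound I would argue cohomologically, aiming to show that the zero-divisor cup length of $\D{n}{\Gamma}$ is at least $2m$, which forces $\TC \ge 2m+1$. The model to keep in mind is the $m$-fold product $(S^1\vee S^1)^m$, whose reduced cohomology is generated by $2m$ one-dimensional classes, whose products of length $m$ are nonzero, and whose zero-divisor cup length is exactly $2m$. The structural input is that ``fully articulated'' means each essential vertex is a cut vertex: I would pick pairwise disjoint neighborhoods $N_1,\dots,N_m$ of the essential vertices $v_1,\dots,v_m$ and, using $n\ge 2m$, assign robots so that each $N_i$ supports two independent one-dimensional cohomology classes localized at $v_i$ (for a degree-$3$ vertex this is exactly where $\Gamma\not\cong Y$ enters, ruling out the degenerate single-circle case). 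Because the $v_i$ are cut vertices, classes attached to distinct essential vertices are supported on disjoint portions of $\Gamma$ and therefore multiply as in a tensor product, so the length-$2m$ product of the corresponding zero-divisors is nonzero. This is the same mechanism behind Farber's tree computation (Theorem \ref{thm:FarberTC}), now carried out on the possibly non-simply-connected pieces that full articulation allows.

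For the upper bound the naive estimate $\TC \le 2\dim+1$ is unavailable: unlike a tree, a fully articulated graph may contain cycles, so $\D{n}{\Gamma}$ can have homotopy dimension strictly larger than $m$ (a single cycle already contributes a copy of $S^1$), and the dimension bound would overshoot. Instead I would induct on $m$ using the disconnection property directly, in the style of a block--cut-tree decomposition. Peeling off one ``leaf'' essential vertex $v$ splits $\Gamma$ into the material that becomes disconnected at $v$ and a remainder with strictly fewer essential vertices; the key is to group any cycles hanging off $v$ together with $v$ itself, so that this local block is never separated into a standalone cyclic component. Realizing $\D{n}{\Gamma}$ up to homotopy as built from these pieces and applying the product-type inequality $\TC(X\times Y)\le \TC(X)+\TC(Y)-1$, each essential vertex (together with its attached cycles) should contribute at most $2$ to the reduced topological complexity. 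The induction then closes with the base case $m=1$, where one checks by hand that for a single essential vertex with $\Gamma\not\cong Y$ the space $\C{n}{\Gamma}$ is an aspherical complex of topological complexity exactly $3=2\cdot 1+1$.

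The main obstacle will be this upper bound, and specifically the cyclic part of a fully articulated graph. The point is that homology carried by cycles must be shown not to raise $\TC$ past $2m+1$, even though it does raise the dimension; this is the same phenomenon as $\TC(T^2)=3<2\dim T^2+1$, and capturing it requires exhibiting a genuine fibration or product structure on $\D{n}{\Gamma}$ coming from the cut vertices, rather than a mere cohomological splitting. In particular one must avoid ever detaching a cycle as its own $m=0$ component, since such a piece carries $\TC=2$ and would make the product inequality overshoot; the cycle must instead be absorbed into the $+2$ already allotted to the essential vertex it hangs from. Verifying that each essential vertex contributes exactly $2$---no more despite the cycles, and no less than the lower bound demands---is where the fully articulated hypothesis does its real work, and is the step I expect to require the most care.
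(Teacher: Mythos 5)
First, note that Theorem \ref{thm:TCFullyArticulated} is quoted from \cite{RecioMitter}; the present paper does not prove it, so your sketch has to be judged against the known argument and against the closely parallel proof the paper gives for $S$-graphs (Theorem \ref{thm:S_Graph}). Your lower bound is in the right spirit: localizing a pair of degree-one classes at each essential (cut) vertex and showing that the resulting product of $2m$ zero-divisors survives is exactly the mechanism used both in \cite{RecioMitter} and in Section \ref{sec:MainResults} here, where the classes are the $\phi_{[c_i]},\phi_{[d_i]}$ dual to the critical cells $\Phi_S,$ $S\in\{2,3\}^m.$ The one place you are too quick is at degree-$3$ vertices: two robots near a degree-$3$ vertex locally give a circle, hence only \emph{one} independent class, so the second class must be manufactured using the rest of the graph; this is precisely where $n\ge 2m(\Gamma)$ and $\Gamma\not\cong Y$ do their work, and it needs more than the parenthetical you give it.

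The genuine gap is in your upper bound, which starts from a false premise. You assert that cycles force the homotopy dimension of $\D{n}{\Gamma}$ above $m(\Gamma),$ so that the estimate $\TC\le 2\dim+1$ overshoots. That is not so: by Farley--Sabalka (Theorem \ref{thm:UDnUpperBound}), for a suitable spanning tree every critical cell of $\UD{n}{\Gamma}$ has dimension at most $\min\left\{n,\left\lfloor\frac{n+\beta}{2}\right\rfloor,m(\Gamma)\right\}\le m(\Gamma)$ for an \emph{arbitrary} graph with an essential vertex; cycles raise the first Betti number of the configuration space but not its homotopy dimension, because each edge of a critical cell must be order-disrespecting and hence anchored at an essential vertex. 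Since $\D{n}{\Gamma}$ is an aspherical finite cover of $\UD{n}{\Gamma},$ its fundamental group has cohomological (hence geometric) dimension at most $m(\Gamma)$ as well, and $\TC(\C{n}{\Gamma})\le 2m(\Gamma)+1$ follows with no induction at all. Your proposed replacement --- peeling off essential vertices and applying $\TC(X\times Y)\le\TC(X)+\TC(Y)-1$ --- would not work even if it were needed: $\D{n}{\Gamma}$ is not a product (or fibration) over the pieces of a block--cut-vertex decomposition, since robots migrate between the components created by deleting a cut vertex and their distribution among the pieces varies over the space, and you exhibit no mechanism that would produce such a splitting. So the elaborate half of your argument addresses a non-problem by an unjustified method, while the correct route (the discrete Morse-theoretic dimension bound plus the covering-space observation) is already available in Section \ref{sec:DMT}.
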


\begin{thm}{\rm\cite{RecioMitter}}\label{thm:TCBanana}
The topological complexity of the banana graph $B_k$ is given by 
\[
\TC(\C{n}{B_k})=\begin{cases}
5,&\text{if } k\ge4\text{ and } n\ge3\\
3,&\text{if } k\ge3\text{ and } n\le2\text{ or }k=n=3.
\end{cases}
\]
\end{thm}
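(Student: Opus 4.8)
The plan is to combine a discrete Morse-theoretic bound on the homotopy dimension of the configuration space (for the upper bounds) with the zero-divisor cup-length estimate of Farber \cite{FarberTC} (for the lower bounds). Throughout I assume $B_k$ has been sufficiently subdivided for $n$, so that by Theorem~\ref{thm:SuffSubdivision} I may replace $\C{n}{B_k}$ by the cube complex $\D{n}{B_k}$; note that $B_k$ has exactly two essential vertices $x$ and $y$, so $m(B_k)=2$.

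First I would establish the upper bounds. The Farley--Sabalka discrete Morse theory on $\D{n}{B_k}$ collapses this cube complex onto a CW complex whose cells correspond to the critical cells of an explicit discrete gradient field, and a critical $d$-cell records $d$ robots simultaneously making ``disrespectful'' moves at $d$ distinct essential vertices. Since $m(B_k)=2$, there are no critical cells of dimension exceeding $2$, so $\C{n}{B_k}$ has homotopy dimension at most $2$, and the standard dimensional upper bound $\TC(X)\le 2\dim_{CW}(X)+1$ yields $\TC(\C{n}{B_k})\le 5$ in all cases. For the sharper bound $\TC\le 3$ in the second regime, I would show that no critical $2$-cell survives. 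A critical $2$-cell requires a robot blocking each disrespectful move; when $n\le 2$ the two moving robots exhaust the supply and no blocker remains, while the case $k=n=3$ must be checked by hand on the theta graph. In either situation the gradient field has critical cells only in dimensions $0$ and $1$, so $\C{n}{B_k}$ is homotopy equivalent to a graph, its fundamental group is free, and $\TC(\C{n}{B_k})\le 3$.

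Next I would prove the matching lower bounds. Because $\C{n}{B_k}$ is connected with first Betti number at least $2$ (two independent cycles of $B_k$ already give independent classes for $k\ge 3$), there exist classes $a,b\in H^1(\C{n}{B_k})$ whose associated zero-divisors $\bar a=a\otimes 1-1\otimes a$ and $\bar b$ satisfy $\bar a\,\bar b\neq 0$, since the $H^1\otimes H^1$ component $-(a\otimes b+b\otimes a)$ is nonzero by Künneth; this gives $\TC\ge 3$ and completes the second regime. For the first regime I must upgrade this to zero-divisor cup-length at least $4$. The point is that when $k\ge 4$ and $n\ge 3$ the Morse complex carries a surviving critical $2$-cell, so $H^2(\C{n}{B_k})\neq 0$ and moreover the product $H^1\smile H^1\to H^2$ is nontrivial: there are one-dimensional classes $a_1,a_2,a_3,a_4$ with $a_1 a_2\neq 0$ and $a_3 a_4\neq 0$ in $H^2$, realized by a braiding at $x$ and an independent braiding at $y$. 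The degree-four zero-divisor product $\bar a_1\bar a_2\bar a_3\bar a_4$ then contains the term $(a_1a_2)\otimes(a_3a_4)\in H^2\otimes H^2\subset H^4(\C{n}{B_k}\times\C{n}{B_k})$, so the cup-length is at least $4$ and $\TC(\C{n}{B_k})\ge 5$.

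The main obstacle is the cohomology-ring computation underlying the lower bound in the first regime: one must verify that the cup product $H^1\smile H^1\to H^2$ is genuinely nonzero and locate four one-dimensional classes realizing $(a_1a_2)\otimes(a_3a_4)$ without cancellation, rather than merely exhibiting $H^2\neq 0$. This is exactly where the numerical thresholds enter, since braiding independently at \emph{both} essential vertices requires enough robots ($n\ge 3$, to supply the blocker that makes the second move disrespectful) and enough parallel arcs ($k\ge 4$, to route the two braids without forcing a collision); the borderline theta-graph case $k=3$ is precisely the one left undetermined for $n\ge 4$. I expect this ring-level analysis, together with transferring the computation between the unordered Morse model and the ordered space $\C{n}{B_k}$, to be the technical heart of the argument.
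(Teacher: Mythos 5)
First, a framing point: the paper does not prove Theorem~\ref{thm:TCBanana} at all --- it is quoted verbatim from \cite{RecioMitter} as background --- so your proposal can only be judged against the general template the paper uses for its own results (a Morse-theoretic dimension bound from above, a zero-divisor cup-length bound from below). You have reproduced that template in outline, but both halves of your argument have concrete gaps. On the upper bounds: your claim that for $k=n=3$ ``the gradient field has critical cells only in dimensions $0$ and $1$'' is false. Take the spanning tree of $B_3$ consisting of one full arc and the other two arcs minus their edges at $y$, root at $y$. Then the cell consisting of the deleted edge of the second arc, the tree edge at $x$ in direction $2$, and the vertex $\iota(e^1(x))$ is a critical $2$-cell of $\UD{3}{B_3}$ by Theorem~\ref{thm:ClassificationOfCells}: the vertex is blocked, the deleted edge is automatically order-disrespecting, and the tree edge is order-disrespecting because of that vertex. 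So the bound $\TC\le 3$ in the $k=n=3$ case cannot come from counting critical cells; one must show the surviving critical $2$-cells are killed by the Morse boundary, which is a genuine computation you have not indicated. Relatedly, for $n\le 2$ your ``no blocker remains'' reasoning overlooks that deleted edges need no blocker to be order-disrespecting; a $2$-cell made of two deleted edges would be critical, and you need the extra observation that all deleted edges can be chosen incident to $y$, so that no two of them are disjoint and no such cell exists.

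On the lower bound $\TC\ge 5$, the step you defer is not a technicality but the entire theorem, and the inference you propose is invalid as stated: knowing $a_1a_2\ne 0$ and $a_3a_4\ne 0$ does not give $\overline{a}_1\overline{a}_2\overline{a}_3\overline{a}_4\ne 0$, because the $H^2\otimes H^2$ component of that product is a sum over all six ordered splittings of $\{1,2,3,4\}$ into pairs, and these terms can cancel. Compare the paper's own proofs of Theorems~\ref{thm:VertexDisjointOrdered} and~\ref{thm:S_Graph}: in each case the product is shown nonzero either by forcing the mixed products to vanish or by evaluating against an explicit class in $H_2\otimes H_2$ on which exactly one splitting survives. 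You would need the analogous bookkeeping here. Moreover, the geometric input ``a braiding at $x$ and an independent braiding at $y$'' is exactly what is delicate in $B_k$: every arc contains both essential vertices, so the two braiding regions cannot be disjoint, $B_k$ is not fully articulated and not an $S$-graph, and $\nu(B_k)=1$ so the vertex-disjoint-cycle argument of Theorem~\ref{thm:VertexDisjointOrdered} only yields $\TC\ge 3$. Identifying two critical $2$-cells whose dual classes are products of one-dimensional classes and survive in the zero-divisor product --- and seeing why this needs $k\ge 4$ (so that $x$ has at least two available nonzero directions beyond the one carrying the blocker) and $n\ge 3$ (to supply the blocker) --- is the actual content of the cited proof, and it is absent from your sketch.
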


It should also be noted that in \cite{Guzman}, Aguilar-Guzm\'{a}n, Gonz\'{a}lez, and Hoekstra-Mendoza study $\TC_s(\C{n}{\Gamma}),$ where $\Gamma$ is a tree and $\TC_s(X)$ stands for the \textit{higher topological complexity} of $X.$  The higher topological complexity of $X$ is a generalization of $\TC(X),$ where $\TC(X)=\TC_2(X);$ we will omit the precise definition here as we only work with $\TC(X).$

In this paper, we generalize the approach taken in \cite{ScheirerTrees} to study $\TC(\UC{n}{\Gamma})$ for graphs $\Gamma$ that are not trees.  Our main results are given in Theorems \ref{thm:S_Graph} and \ref{thm:VertexDisjointUnordered} below.  Theorem \ref{thm:S_Graph} refers to a class of graphs called $S$-\textit{graphs.}  The precise definition of an $S$-graph is given in Definition \ref{defn:S_Graph}, but it is worth noting that the class of $S$-graphs is a (proper) subclass of the class of fully articulated graphs (see the discussion following Definition \ref{defn:S_Graph}), so Theorem \ref{thm:S_Graph} partially extends Theorem \ref{thm:TCFullyArticulated} to the unordered setting.  In Theorem \ref{thm:VertexDisjointUnordered}, $\nu(\Gamma)$ refers to the maximum cardinality of a collection of vertex-disjoint cycles in $\Gamma.$

\begin{thm}\label{thm:S_Graph}
Let $\Gamma$ be an $S$-graph, and let $n$ be an integer satisfying $n\ge 2m(\Gamma).$  Then
$\TC(\UD{n}{\Gamma})=2m(\Gamma)+1.$
\end{thm}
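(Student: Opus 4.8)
The plan is to establish the two bounds $\TC(\UD{n}{\Gamma})\le 2m(\Gamma)+1$ and $\TC(\UD{n}{\Gamma})\ge 2m(\Gamma)+1$ separately, since they call for genuinely different tools. Throughout I would exploit that $\UD{n}{\Gamma}$ is a nonpositively curved, and in particular aspherical, cube complex, so that $H^*(\UD{n}{\Gamma})$ and its cup products can be read off directly from the cellular cochain complex of the cube structure; this is what makes the cohomological lower bound tractable by hand. The upper bound will come from a dimension estimate, and the lower bound from the zero-divisor cup length.

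For the upper bound, the first step is to replace $\UD{n}{\Gamma}$ by a CW complex of dimension $m(\Gamma)$. Away from the essential vertices $\Gamma$ is a disjoint union of arcs, and robots travelling along arcs can be pushed to the ends of their edges without collision; only essential vertices can obstruct such a collapse. Using the $S$-graph structure to organize these collapses (for instance via a discrete Morse function on the cube complex whose critical cells have dimension at most $m(\Gamma)$), I would obtain a homotopy equivalence from $\UD{n}{\Gamma}$ to a complex of dimension $m(\Gamma)$. The standard bound $\TC(X)\le 2\dim X+1$ then yields $\TC(\UD{n}{\Gamma})\le 2m(\Gamma)+1$.

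For the lower bound I would use $\TC(X)\ge\mathrm{zcl}(X)+1$, where $\mathrm{zcl}(X)$ is the length of the longest nonzero product of zero-divisors $\bar u=1\otimes u-u\otimes 1\in H^*(X\times X)$, and aim to exhibit a nonzero product of $2m(\Gamma)$ such classes. Since a one-dimensional class satisfies $\bar u^{\,2}=0$, one cannot simply square $m(\Gamma)$ classes; instead I would attach to each essential vertex $v_i$ two independent one-dimensional classes $\alpha_i,\beta_i$ with $\alpha_i\beta_i=0$, coming from two independent loops of robots near $v_i$. Full articulation is what makes these local loops available and independent: deleting the essential vertices splits $\Gamma$ into pieces, so the loops at distinct vertices live in disjoint subgraphs, and $n\ge 2m(\Gamma)$ guarantees enough robots to run all of them simultaneously. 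A short computation gives $\bar\alpha_i\bar\beta_i=\beta_i\otimes\alpha_i-\alpha_i\otimes\beta_i$, and expanding $\prod_{i=1}^{m(\Gamma)}\bar\alpha_i\bar\beta_i$ produces a sum, indexed by the $2^{m(\Gamma)}$ ways of choosing one of $\alpha_i,\beta_i$ at each vertex, of terms $\pm\,u_S\otimes u_{S^c}$ lying in $H^{m(\Gamma)}(\UD{n}{\Gamma})\otimes H^{m(\Gamma)}(\UD{n}{\Gamma})$. Provided each choice has nonzero product in $H^{m(\Gamma)}(\UD{n}{\Gamma})$ and the resulting monomials are distinct, there is no cancellation, so the product is nonzero and $\mathrm{zcl}\ge 2m(\Gamma)$, giving $\TC(\UD{n}{\Gamma})\ge 2m(\Gamma)+1$.

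The main obstacle is precisely the proviso in that last step: showing that every choice of one loop class per essential vertex multiplies to a nonzero top class, that is, that the classes from different vertices behave like the generators of a product of wedges of circles. This is where the combinatorial cohomology computations of \cite{ScheirerTrees} must be extended from trees to $S$-graphs, and I expect the real work to be twofold. First, one must guarantee that each essential vertex genuinely carries two independent loop classes at the threshold $n=2m(\Gamma)$; a degree-three vertex with only two nearby robots contributes just a single loop, so the $S$-graph hypothesis (and the cycles it provides) must be used to supply the second class. Second, one must prove the multiplicative independence across vertices by writing each $\alpha_i,\beta_i$ as an explicit cellular cocycle supported in the articulated piece containing $v_i$ and evaluating the cup product on a single $m(\Gamma)$-cell assembled from the local loops; the presence of cycles, absent in the tree case, is the new feature that could a priori create extra cohomology interfering with this evaluation, and confining each cocycle to its own piece is how I would rule that out.
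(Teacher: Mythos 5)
Your overall strategy coincides with the paper's: the upper bound comes from a discrete Morse function whose critical cells have dimension at most $m(\Gamma)$ (Theorems \ref{thm:UDnUpperBound} and \ref{thm:TCUDnUB}), and the lower bound comes from a product of $2m(\Gamma)$ zero-divisors built from two degree-one classes per essential vertex whose own product vanishes, expanded into $2^{m(\Gamma)}$ terms indexed by a choice at each vertex. The paper's two classes at $v_i$ are the cochains $\phi_{[c_i]},\phi_{[d_i]}$ dual to the equivalence classes of the $1$-cells $\{e^{2}(v_i),\dots\}$ and $\{e^{3}(v_i),\dots\}$, and the $2^{m}$ resulting monomials are the duals of the critical cells $\Phi_S(\Gamma)$, $S\in\{2,3\}^m$.

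The genuine gap is the step you explicitly defer with ``provided each choice has nonzero product \ldots and the resulting monomials are distinct.'' That proviso is the entire content of the proof, and it is not a routine extension of the tree computation. Two things must be established. First, one needs $2^{m}$ critical $m$-cells representing linearly independent classes in $H_{m(\Gamma)}$; the paper gets this (Lemma \ref{lemma:PhiPpty1}) by choosing the cells $\Phi_S(\Gamma)$ inside the spanning tree, so their Morse boundaries vanish by Corollary \ref{cor:CellsInTrees}, and by noting there are no critical $(m+1)$-cells. Second, and more delicately, the cup product of $m$ chosen one-dimensional classes is in general a \emph{sum} over all equivalence classes of $m$-cells that are upper bounds for the chosen collection (Lemma \ref{lemma:Products}); when $\Gamma$ has cycles a collection can have several upper bounds, so the product need not be a single dual class and cancellation in the $2m$-fold product is a real threat. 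The paper rules this out in Lemma \ref{lemma:PhiPpty2} by proving each $[\Phi_S(\Gamma)]$ is the \emph{least} upper bound of its collection, and this is precisely where the simple-component condition in Definition \ref{defn:S_Graph} enters (via Lemma \ref{lemma:simplecycles}, which transfers the partial order from $\Gamma$ back to the spanning tree). Your sketch does not engage with the multiple-upper-bound phenomenon at all. Relatedly, you misplace where the $S$-graph hypothesis supplies the second class at each vertex: it is the requirement $\deg_T(v)\ge 4$ that furnishes the two tree directions $2$ and $3$ (not the cycles), while the cycle/simple-component condition is what makes the least-upper-bound argument work.
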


\begin{thm}\label{thm:VertexDisjointUnordered}
Let $\Gamma$ be any graph with $\nu=\nu(\Gamma)\ge 2,$ and let $n$ be an integer satisfying $1\le n\le\frac{1}{2}\nu.$  Then $\TC(\UD{n}{\Gamma})=2n+1.$
\end{thm}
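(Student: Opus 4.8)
The plan is to prove the two inequalities $\TC(\UD{n}{\Gamma})\le 2n+1$ and $\TC(\UD{n}{\Gamma})\ge 2n+1$ separately; the upper bound is essentially formal, and the lower bound is the substantive part. For the upper bound, note that every cell $\{c_1,\dots,c_n\}$ of $\UD{n}{\Gamma}$ has dimension equal to the number of the $c_i$ that are open edges, and this is at most $n$. Hence $\dim\UD{n}{\Gamma}\le n$, and the standard dimensional estimate $\TC(X)\le 2\dim X+1$ for CW complexes gives $\TC(\UD{n}{\Gamma})\le 2n+1$. The crucial consequence I will exploit below is that $H^{k}(\UD{n}{\Gamma};\Q)=0$ for every $k>n$.

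For the lower bound I would use the zero-divisor cup-length estimate. Since $\nu\ge 2n$, fix a collection of vertex-disjoint cycles and select $2n$ of them, $C_1,\dots,C_{2n}$. For each $j$ I would build a class $\alpha_j\in H^1(\UD{n}{\Gamma};\Q)$ measuring the total winding of a configuration around $C_j$: concretely, the cellular cochain assigning to an oriented $1$-cell (one robot crossing a single chosen edge $e_j$ of $C_j$, all others fixed) the signed count of crossings of $e_j$, and $0$ to all other $1$-cells. A direct check that signed edge-crossings commute shows $\alpha_j$ is a cocycle, since a $2$-cell records two robots each traversing a distinct edge and the two boundary arcs crossing $e_j$ cancel. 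Next, for each size-$n$ subset $S\subseteq\{1,\dots,2n\}$, placing one robot on each $C_j$ with $j\in S$ yields an embedded $n$-torus $T_S\cong\prod_{j\in S}C_j\subset\UD{n}{\Gamma}$; here vertex-disjointness guarantees the robots never collide, so the unordered configuration is recovered from its coordinates, and sufficient subdivision guarantees each cycle has room for a robot to traverse it. By construction $\alpha_j$ restricts to the standard generator of the $C_j$-factor when $j\in S$ and to $0$ when $j\notin S$, so $\langle\prod_{j\in S}\alpha_j,[T_{S'}]\rangle=\delta_{S,S'}$ for size-$n$ subsets $S,S'$. In particular the products $\{\prod_{j\in S}\alpha_j:|S|=n\}$ are linearly independent in $H^n(\UD{n}{\Gamma};\Q)$.

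Now consider the $2n$ zero-divisors $\bar\alpha_j=1\otimes\alpha_j-\alpha_j\otimes 1\in H^1(\UD{n}{\Gamma}\times\UD{n}{\Gamma};\Q)$ and expand $\prod_{j=1}^{2n}\bar\alpha_j=\sum_{S}\pm\bigl(\prod_{j\notin S}\alpha_j\bigr)\otimes\bigl(\prod_{j\in S}\alpha_j\bigr)$. Because $H^{k}=0$ for $k>n$, every term with $|S|\ne n$ vanishes, so what survives is a signed sum over size-$n$ subsets of tensors lying in $H^n\otimes H^n$. Since both the factors $\prod_{j\in S}\alpha_j$ and $\prod_{j\notin S}\alpha_j$ are linearly independent as $S$ ranges over size-$n$ subsets, and $S\mapsto(S^{c},S)$ is injective, these tensors are themselves linearly independent; hence no cancellation occurs and $\prod_{j=1}^{2n}\bar\alpha_j\ne 0$. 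The zero-divisor cup-length lower bound then gives $\TC(\UD{n}{\Gamma})\ge 2n+1$, and combined with the upper bound we obtain equality.

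I expect the main obstacle to be the construction of the winding classes $\alpha_j$ together with the verification that they are genuine cocycles on the discretized model and satisfy $\langle\prod_{j\in S}\alpha_j,[T_{S'}]\rangle=\delta_{S,S'}$; this is precisely where vertex-disjointness and sufficient subdivision are indispensable, the former so that the tori $T_S$ embed and distinct cycles do not interfere with one another's winding, the latter so that each $C_j$ admits a full robot-traversal and distinct cycles remain a full open edge apart. I would carry this out by working directly with the cellular cochain complex of Abrams' discretized model, where the cocycle condition reduces to the observation that edge-crossings on disjoint edges commute; an alternative would be to realize each $\alpha_j$ via a winding-number map and each $[T_{S'}]$ as an honest subcomplex, but the cellular computation seems cleanest and also explains transparently why exactly half the vertex-disjoint cycles are required.
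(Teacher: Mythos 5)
Your argument is correct, but it reaches the lower bound by a genuinely different route than the paper. The paper stays inside the Farley--Sabalka discrete Morse framework: it constructs a special essential spanning tree in which each chosen cycle carries exactly one deleted edge and the intervals $[\wedge\w{e}_i,\iota(\w{e}_i)]$ are pairwise disjoint (Lemma \ref{lemma:VertexDisjointSpanningTree}), shows via a computation with the Morse boundary $\w{\partial}=\pi F^\infty\partial$ (Lemmas \ref{lemma:rIinfty}--\ref{cor:DeletedEdgeBoundary}) that the critical $n$-cells $\Psi_R$ made of $n$ deleted edges represent independent homology classes, and then runs the zero-divisor product through the cochains $\phi_{[c]}$ and the least-upper-bound machinery of Lemmas \ref{lemma:Products} and \ref{lemma:DistinctHomologyClasses}. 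You instead transplant the proof of the \emph{ordered} case (Theorem \ref{thm:VertexDisjointOrdered}) to the unordered setting: because the $2n$ cycles are vertex-disjoint, the closures of cells supported on distinct cycles are automatically disjoint, so each product $\prod_{j\in S}C_j$ with $|S|=n$ embeds as a torus subcomplex of $\UD{n}{\Gamma}$ even after passing to the unordered quotient (the robots cannot exchange cycles), and your edge-crossing cochains $\alpha_j$ are honest cocycles whose restrictions to these tori are computed by K\"unneth; the evaluation $\langle\prod_{j\in S}\alpha_j,[T_{S'}]\rangle=\delta_{S,S'}$ then forces the surviving degree-$(n,n)$ terms of $\prod\bar\alpha_j$ to be linearly independent. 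This is more elementary and self-contained; what the paper's route buys is uniformity with the $S$-graph theorem, where no such embedded tori exist and the Morse-theoretic cup-product calculus is unavoidable. Two small points to tidy up: (i) over $\Q$ you must fix orientations of the cells of the unordered quotient consistently for the signed crossing count to be well defined --- working over $\Zm{2}$ as the paper does sidesteps this and changes nothing in your independence argument; (ii) for the nonvanishing of $\prod_{j=1}^{2n}\bar\alpha_j$ it is even quicker to pair the whole product against $[T_{S_0^c}]\otimes[T_{S_0}]$ for one fixed $n$-subset $S_0$, since exactly one term of the expansion survives that evaluation.
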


We also give an analogous version of Theorem \ref{thm:VertexDisjointUnordered} in the ordered setting in Theorem \ref{thm:VertexDisjointOrdered}.  It is also worth noting that the results listed here may seem to suggest that the topological complexity of the unordered configuration space $\UC{n}{\Gamma}$ agrees with that of the ordered configuration space $\C{n}{\Gamma}.$  However, as mentioned in \cite{RecioMitter}, one can construct an infinite family of trees $\Gamma$ such that $\TC(\UC{n}{\Gamma})<\TC(\C{n}{\Gamma})$ for certain integers $n.$  At present, a more precise description of the relationship between the topological complexities in the ordered and unordered context remains an open problem.

The rest of this paper is organized as follows.  In Section \ref{sec:TC}, we recall the main tools for computing topological complexity and provide an example of how these may be used to compute topological complexity of certain ordered graph configuration spaces. In Section \ref{sec:DMT}, we include a description of an approach to studying the unordered configuration space $\UD{n}{\Gamma}$ given by Daniel Farley and Lucas Sabalka using Forman's discrete Morse theory and generalize some of their results which will be used in the proofs of Theorems \ref{thm:S_Graph} and \ref{thm:VertexDisjointUnordered} in Section \ref{sec:MainResults}.

The author would like to extend his gratitude to the anonymous reviewer for carefully reading the first version of this manuscript and providing very useful suggestions on the structure of the paper.

\section{Topological Complexity}\label{sec:TC}
The main tools we will use for computing topological complexity are given in the following theorem.

\begin{thm}{\rm\cite{FarberTC}}\label{thm:TCTools}
We have the following properties of topological complexity:
\begin{enumerate}
    \item If $X$ and $Y$ are homotopy equivalent, then $\TC(X)=\TC(Y).$
    \item If $X$ is a CW complex of dimension $k,$ then $\TC(X)\le2k+1.$
    \item $\TC(X)>\mathrm{zcl}(X),$ where $\mathrm{zcl}(X)$ denotes the zero-divisors cup length of $X.$ 
\end{enumerate}
\end{thm}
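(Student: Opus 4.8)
The plan is to treat all three parts through the single observation that $\TC(X)$ coincides with the Schwarz genus (sectional category) of the free path fibration $p\from P(X)\to X\times X$ described in the introduction, and then to invoke the standard behavior of sectional category under homotopy equivalence, under the dimension of the base, and under cup products. I will argue each part separately, keeping the non-reduced conventions used in Definition~\ref{defn:TC} throughout.

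For part (1), I would argue directly at the level of motion planners. Suppose $f\from X\to Y$ is a homotopy equivalence with homotopy inverse $g\from Y\to X$, and fix a homotopy $H\from X\times I\to X$ from $g\circ f$ to $\mathrm{id}_X$, so that $H(x,0)=g(f(x))$ and $H(x,1)=x$. Given open sets $V_1,\dots,V_k$ covering $Y\times Y$ with continuous sections $t_i\from V_i\to P(Y)$, set $U_i=(f\times f)^{-1}(V_i)$; these are open and cover $X\times X$. For $(x,x')\in U_i$ I would produce a path in $X$ by concatenating three pieces: the path $s\mapsto H(x,1-s)$ from $x$ to $g(f(x))$, the image under $g$ of the path $t_i(f(x),f(x'))$ from $g(f(x))$ to $g(f(x'))$, and the path $s\mapsto H(x',s)$ from $g(f(x'))$ to $x'$. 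This concatenation depends continuously on $(x,x')$, so it defines a section $s_i\from U_i\to P(X)$ and shows $\TC(X)\le\TC(Y)$. The reverse inequality follows by exchanging the roles of $X$ and $Y$.

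For part (2), I would use the general bound that the Schwarz genus of a fibration over a CW base $B$ is at most $\dim B+1$. Concretely, I would build the covering by working over skeleta: a fibration always admits a local section near any point, and one can inductively enlarge the sets so that covering through one more dimension costs at most one additional open set, using the fact that a section defined on the boundary of a cell extends over the cell via the homotopy lifting property of $p$. Since $X$ is a CW complex of dimension $k$, the product $X\times X$ is a CW complex of dimension $2k$, and this yields $\TC(X)=\mathrm{genus}(p)\le 2k+1$.

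Part (3) is where I expect the real work to lie. The path fibration $p$ is fiber-homotopy equivalent, via the deformation of $P(X)$ onto constant paths, to the diagonal map $\Delta\from X\to X\times X$, so a continuous section over a set $U$ witnesses that $\Delta^*$ behaves trivially there. The key lemma to establish is that if $U\subseteq X\times X$ admits a continuous section of $p$, then every \emph{zero-divisor}---that is, every class in the kernel of $\Delta^*\from H^*(X\times X)\to H^*(X)$---restricts to zero in $H^*(U)$. Granting this, if $u_1,\dots,u_\ell$ are zero-divisors with $u_1\cup\cdots\cup u_\ell\ne 0$, then no single open set of a covering can carry the whole product, and a standard relative-cohomology argument (forming classes supported away from each $U_i$ and multiplying them) shows that at least $\ell+1$ sets are required, giving $\TC(X)>\ell$ and hence $\TC(X)>\mathrm{zcl}(X)$. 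The main obstacle is proving the restriction lemma and the counting step cleanly; this is exactly the content of Schwarz's cohomological lower bound for sectional category, and arranging the relative classes so that their product detects the cover is the delicate part.
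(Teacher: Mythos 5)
The paper does not actually prove this statement; it is quoted directly from Farber, so there is no internal argument to compare against. Your reconstruction of the standard proofs is largely on target: part (1) is a complete and correct argument (essentially Farber's own proof of homotopy invariance), and part (3) correctly identifies the two essential ingredients --- that a section of $p$ over $U$ forces the inclusion $U\hookrightarrow X\times X$ to factor through the diagonal up to homotopy, so every zero-divisor restricts to zero on $U$, together with the relative cup product
$H^*(X\times X,U_1)\otimes\cdots\otimes H^*(X\times X,U_\ell)\to H^*\bigl(X\times X,\textstyle\bigcup_i U_i\bigr)=0$
--- even though you leave the second ingredient as a sketch.

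The one step that would fail as written is the mechanism you propose for part (2). A section of a fibration defined on the boundary $\partial D^m$ of a cell does not in general extend over the cell by the homotopy lifting property: this is an extension problem, not a lifting problem, and it is obstructed by $\pi_{m-1}$ of the fiber. Here the fiber is the loop space $\Omega X$, which need not even be connected (e.g.\ when $\pi_1(X)\ne 0$, the typical situation for graph configuration spaces), so the inductive skeletal extension can genuinely get stuck. The correct route to $\TC(X)\le 2k+1$ is the chain $\TC(X)\le\mathrm{cat}(X\times X)\le\dim(X\times X)+1=2k+1$: one covers the $2k$-dimensional CW complex $X\times X$ by $2k+1$ open sets, each a disjoint union of contractible pieces; a fibration over a contractible paracompact set is fiber-homotopy trivial and hence admits a section, and the first inequality uses that $P(X)$ is path-connected, so a null-homotopy of the inclusion $U\hookrightarrow X\times X$ can be lifted to produce a section over $U$. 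With that repair, all three parts go through.
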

Recall that the zero-divisors cup length of $X$ is the largest integer $i$ such that there are elements 
\[
a_1,\dots,a_i\in\ker(\smallsmile\from H^*(X;\mathbf{k})\otimes H^*(X;\mathbf{k})\to H^*(X;\mathbf{k}))
\]
such that the product $a_1\cdots a_i$ is non-zero, and $\mathbf{k}$ is some coefficient field.  In this paper, we will take $\mathbf{k}=\Zm{2}$ and omit the coefficients from the notation.

As an illustration of how Theorem 2.1 will be used, we consider the case in which the graph $\Gamma$ contains enough cycles to allow $n$ ordered robots to simultaneously move around the cycles without crossing paths.  Recall a cycle in a graph is a sequence of vertices $\cycle=(v_1,v_2,\dots,v_k,v_1)$ in which $v_i\ne v_k$ for $i\ne k$ and there is an edge connecting $v_i$ to $v_{i+1}$ (where the indices are read modulo $k$).  Two cycles $\cycle_1$ and $\cycle_2$ are \emph{vertex-disjoint} if they have no vertices in common; we denote the maximum number of vertex-disjoint cycles in $\Gamma$ by $\nu(\Gamma).$  As an example, consider the complete graph on $m$ vertices, $K_m.$  Any cycle must contain at least 3 vertices, so $\nu(K_m)\le \left\lfloor\frac{m}{3}\right\rfloor.$  This upper bound can be realized by considering the smallest cycles containing distinct triples of vertices, so $\nu(K_m)= \left\lfloor\frac{m}{3}\right\rfloor.$

\begin{thm}\label{thm:VertexDisjointOrdered}
Let $\Gamma$ be any graph with $\nu=\nu(\Gamma)\ge 2,$ and let $n$ be an integer satisfying $2\le n\le\nu.$  Then $\TC(\D{n}{\Gamma})=2n+1.$
\end{thm}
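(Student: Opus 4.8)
The plan is to prove matching bounds. For the upper bound, I would observe that a cell of $\D{n}{\Gamma}$ is a product $c_1\times\cdots\times c_n$ in which each $c_i$ is a vertex or an open edge, so its dimension is at most $n$; hence $\D{n}{\Gamma}$ is an $n$-dimensional CW complex and Theorem \ref{thm:TCTools}(2) gives $\TC(\D{n}{\Gamma})\le 2n+1$. Everything then rests on the reverse inequality $\TC(\D{n}{\Gamma})\ge 2n+1$, which by Theorem \ref{thm:TCTools}(3) follows once I exhibit $2n$ zero-divisors whose product is nonzero, i.e. once I show $\mathrm{zcl}(\D{n}{\Gamma})\ge 2n$.

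To build these classes I would fix vertex-disjoint cycles $C_1,\dots,C_n$ (available since $n\le\nu$) together with a single $n$-cycle $\sigma\in\sym{n}$. Passing to the homotopy equivalent space $\C{n}{\Gamma}$ (Theorem \ref{thm:SuffSubdivision} and Theorem \ref{thm:TCTools}(1)), for each $j$ let $p_j\from\Gamma\to S^1$ collapse the complement of the open cycle $C_j$ to the basepoint, so that $p_j$ has degree one on $C_j$. Recording the $i$-th robot through $p_i$ and through $p_{\sigma(i)}$ produces classes $x_i,x_i'\in H^1(\C{n}{\Gamma})$, where $x_i$ detects robot $i$ winding around $C_i$ and $x_i'$ detects it winding around $C_{\sigma(i)}$. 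Writing $\bar a=a\otimes 1+1\otimes a$ for the associated zero-divisor, I would then prove that
\[
\bar x_1\bar x_1'\cdots\bar x_n\bar x_n'\neq 0\in H^{*}(\C{n}{\Gamma})\otimes H^{*}(\C{n}{\Gamma}).
\]

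The verification reduces to bookkeeping. Since $\C{n}{\Gamma}$ has dimension $n$, only the bidegree-$(n,n)$ part of this product can survive, and that part is the sum, over all ways of sending $n$ of the $2n$ classes to the left-hand tensor factor, of the corresponding tensor of cup products. Two families of cup products of winding classes vanish automatically: a single robot cannot wind two disjoint cycles, and two robots cannot independently wind a common cycle (their joint motion is only one-dimensional), so any monomial placing both of robot $i$'s classes on one side, or forcing two robots onto a single cycle, dies. A surviving monomial must therefore put exactly one of $x_i,x_i'$ on each side and use $n$ distinct cycles on each side; because $\sigma$ is a single $n$-cycle, the only such distributions are the two pure ones, leaving
\[
(x_1\cdots x_n)\otimes(x_1'\cdots x_n')+(x_1'\cdots x_n')\otimes(x_1\cdots x_n).
\]
I would finish by evaluating the two $n$-fold products on the evident $n$-tori (robot $i$ on $C_i$, respectively robot $i$ on $C_{\sigma(i)}$): the collapse maps show each product pairs to $1$ with its own torus and, since $\sigma$ is fixed-point-free, to $0$ with the other. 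Over $\Zm2$ the displayed sum is then nonzero, so $\mathrm{zcl}(\C{n}{\Gamma})\ge 2n$.

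The main obstacle I anticipate is precisely this non-cancellation: controlling the whole zero-divisor product requires knowing which cup products of winding classes vanish in $H^{*}(\C{n}{\Gamma})$ and which do not. The vanishing of the impossible products and the linear independence of the two surviving $n$-torus classes are what make the argument run, and the choice of $\sigma$ as a single $n$-cycle is what prevents the mixed monomials from appearing and cancelling. Establishing these cohomological facts cleanly — most safely through the explicit collapse maps to tori and evaluation on fundamental classes — is the technical heart, with the hypotheses $2\le n\le\nu$ entering exactly to guarantee enough vertex-disjoint cycles and the existence of a fixed-point-free $\sigma$.
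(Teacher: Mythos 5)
Your proposal is correct and follows essentially the same route as the paper's proof: the dimension bound $\dim\D{n}{\Gamma}\le n$ for the upper estimate, and for the lower estimate a product of $2n$ zero-divisors built from winding classes of $n$ vertex-disjoint cycles twisted by a cyclic permutation, shown nonzero by pairing against the two torus classes $y\otimes y'$ coming from the maps $\prod S^1\to\D{n}{\Gamma}$. The one (harmless) difference is that the paper never needs your claim that two robots cannot simultaneously wind a common cycle: after discarding monomials in which a single slot carries two degree-one classes of $\Gamma$ (these vanish because $H^2(\Gamma)=0$), it simply evaluates every remaining monomial on $y\otimes y'$, and the pairing itself annihilates all mixed terms, so you could drop that unproved cohomological vanishing entirely by running your final evaluation over all surviving monomials rather than just the two pure ones.
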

\begin{proof}
The approach we take here is essentially the approach taken by Farber in \cite{FarberConfigSpacesMotionPlanning} and L\"{u}tgehetmann and Recio-Mitter in \cite{RecioMitter}.  Since $\D{n}{\Gamma}\subset \prod_{i=1}^n\Gamma,$ we have $\dim(\D{n}{\Gamma})\le n,$ so $\TC(\D{n}{\Gamma})\le 2n+1.$ 

For the lower bounds on $\TC(\D{n}{\Gamma})$, let $\cycle_1,\dots,\cycle_n$ be vertex-disjoint cycles in $\Gamma,$ and for each $i,$ fix a homeomorphism $\epsilon_i\from S^1\to\cycle_i.$
For each permutation $\sigma\in\symm{n},$ let 
\[
f_\sigma\from\prod_{i=1}^nS^1\to \prod_{i=1}^n\Gamma
\]
be the map which sends $(x_1,\dots,x_n)$ to $(\epsilon_{\sigma(1)}(x_1),\dots,\epsilon_{\sigma(n)}(x_n)).$  Since the cycles $\cycle_1,\dots\cycle_n$ are disjoint, this gives a map 
\[
\w{f}_\sigma\from\prod_{i=1}^nS^1\to\D{n}{\Gamma}.
\]
The image of $(x_1,\dots,x_n)$ is a configuration in which robot $i$ falls on cycle $\cycle_{\sigma(i)}.$  Let $\Phi\from \D{n}{\Gamma}\to\prod_{i=1}^n\Gamma$ denote the inclusion.  These maps fit into the following commutative diagram.

\[
\begin{tikzcd}
\displaystyle
\prod_{i=1}^nS^1\arrow[r,"\w{f}_\sigma"]\arrow[dr,"f_\sigma"]&\D{n}{\Gamma}\arrow[d,"\Phi"]\\
&\displaystyle\prod_{i=1}^n\Gamma
\end{tikzcd}
\]

For each $j=1,\dots,n,$ let $a_j$ denote a non-zero class in $H_1(\Gamma)$ corresponding to $\cycle_j$ and let $\alpha_j\in H^1(\Gamma)$ denote the cohomology class dual to $a_j.$ Let 
\[
\mu_i=\Phi^*(1\times\cdots\times\alpha_i\times\cdots\times1)\in H^1(\D{n}{\Gamma}),
\]
where the only nontrivial term, $\alpha_i,$ falls in the $i$th factor, and let 
\[
\mu'_i=\Phi^*(1\times\cdots\times\alpha_{i+1}\times\cdots\times1)\in H^1(\D{n}{\Gamma}),
\]
where the non-trivial term, $\alpha_{i+1},$ falls in the $i$th factor (subscripts are to be read modulo $n$).  The diagram shows $\mu_1\cdots\mu_n$ and $\mu_1'\cdots\mu_n'$ are non-zero in $H^*(\D{n}{\Gamma}).$

Let $\overline{\mu}_i=\mu_i\otimes1+1\otimes\mu_i$ and $\overline{\mu}'_i=\mu'_i\otimes1+1\otimes\mu'_i.$  Each $\overline{\mu}_i$ and $\overline{\mu}'_i$ is a zero-divisor.  Consider the following product:
\begin{equation}
\biggl(\prod_{i=1}^n\overline{\mu}_i\biggr)\cdot \biggl(\prod_{i=1}^n\overline{\mu}'_i\biggr).
\label{eqn:orderedZD}
\end{equation}
We wish to show this product is non-zero.  For $i=1,\dots,n,$ let $b_i\in H_1(\prod S^1)$ denote a non-zero class corresponding to the copy of $S^1$ in the $i$th factor, let $b=b_1\times\cdots\times b_n\in H_n(\prod S^1),$ and let $y=(\w{f}_\sigma)_*(b),$ where $\sigma(i)=i$ and let $y'=(\w{f}_{\sigma'})_*(b),$ where $\sigma'(i)=i+1.$  Notice that $(\Phi)_*(y)=a_1\times a_2\times\cdots\times a_{n-1}\times a_n$ and $(\Phi)_*(y')=a_2\times a_3\cdots\times a_n\times a_1.$

We will show that (\ref{eqn:orderedZD}) is non-trivial by showing it acts non-trivially on $y\otimes y'.$ The product in (\ref{eqn:orderedZD}) is a sum of terms of the form $A\otimes B$ where $A$ and $B$ are monomials of the form
\begin{equation}
\mu_{i_1}\cdots\mu_{i_s}\mu'_{j_{1}}\cdots\mu'_{j_t}.\label{eqn:Monomial}
\end{equation}

Assume $A\otimes B\ne0.$  Since the top dimension in $H^*(\D{n}{\Gamma})$ is $n$, we have $|A|=|B|=n,$ so $s+t=n.$  If $I=\{i_1,\dots,i_s\},$ we will write $\mu_{i_1}\cdots\mu_{i_s}$ as $\mu_I$ and define $\mu'_J$ similarly for $J=\{j_1,\dots,j_s\}$. We define $\mu_\emptyset=\mu'_\emptyset=1,$ so that each term in (\ref{eqn:orderedZD}) can be written as $\mu_I\mu'_J\otimes\mu_{\overline{I}}\mu'_{\overline{J}},$ where $\overline{I}$ and $\overline{J}$ denote the elements of $\{1,\dots,n\}$ which aren't contained in $I$ and $J,$ respectively.  If $|I\cup J|<n,$ then there must be two indices in (\ref{eqn:Monomial}) that share the same value, say, $i_0.$ In this case, the product $\mu_I\mu'_J$ contains $\Phi^*(\cdots\times a_{i_0}a_{i_0+1}\times\cdots)=0,$ so, we may assume each index is distinct, so that $I\cup J=\overline{I}\cup\overline{J}=\{1,\dots,n\}.$

Let $\delta_i=0$ if $i\in I$ and $\delta_i=1$ if $i\in J.$  We have
\begin{align*}
\langle \mu_I\mu'_J,y\rangle
&=\langle\Phi^*(\alpha_{1+\delta_1}\times\cdots\times\alpha_{n+\delta_n}),y\rangle\\ &=\langle\alpha_{1+\delta_1}\times\cdots\times\alpha_{n+\delta_n},\Phi_*(y)\rangle\\
&=\langle\alpha_{1+\delta_1}\times\cdots\times\alpha_{n+\delta_n},a_1\times\cdots\times a_n\rangle.
\end{align*}
This evaluation is nonzero if and only if $\delta_i=0$ for each $i,$ so that $I=\{1,\dots,n\}$ and $J=\emptyset.$  

Similarly, $\langle\mu_I\mu'_J,y'\rangle\ne 0$
if and only if $I=\emptyset$ and $J=\{1,\dots,n\},$ so 
\begin{align*}
&\biggl\langle \biggl(\prod_{i=1}^n\overline{\mu}_{i}\biggr)\cdot \biggl(\prod_{i=1}^n\overline{\mu}'_{i}\biggr),y\otimes y'\biggr\rangle
=\langle \mu_{1}\cdots\mu_{n}\otimes \mu'_{1}\cdots\mu'_{n},y\otimes y'\rangle
\ne0,
\end{align*}
establishing the lower bound $\TC(\D{n}{\Gamma})\ge 2n+1,$ and completing the proof.
\end{proof}

\section{Discrete Morse Theory and Unordered Configuration Spaces}\label{sec:DMT}
In this section, we introduce the machinery we will use to prove Theorems \ref{thm:S_Graph} and \ref{thm:VertexDisjointUnordered}.  In Subsection \ref{subsec:DMTBackground}, we give brief background information on discrete Morse theory.  In Subsection \ref{subsec:FSDGVF}, we describe Farley and Sabalka's approach to studying $\UD{n}{\Gamma}$ using discrete Morse theory and give straightforward upper bounds on $\TC(\UD{n}{\Gamma}).$  Finally, in Subsection \ref{subsec:Cohomology}, we extend results of Farley and Sabalka to give (co)homological information about the spaces $\UD{n}{\Gamma},$ which will be used to establish lower bounds in the proofs of the main results. 

\subsection{Background on discrete Morse theory}\label{subsec:DMTBackground}
We will only give a very brief overview of the main ideas of discrete Morse theory here and refer the reader to Forman's texts \cite{Forman} and \cite{FormanUser} for a detailed account. One of the main tools in discrete Morse theory is the notion of a \emph{discrete gradient vector field} on a cell complex $X.$  A discrete gradient vector field $W$ can be viewed as a way of assigning to each $i$-cell $\sigma$ either an $(i+1)$-cell $\tau$ of which $\sigma$ is a face or a ``neutral'' element $0$.  This assignment must satisfy a number of conditions which classify each cell of $X$ into exactly one of three types:
\begin{defn}\label{defn:CellTypes}
Let $W$ be a discrete gradient vector field on $X.$  We have the following classification of the cells of $X:$
\begin{enumerate}
\item If $W(\sigma)\ne0,$ then $\sigma$ is called a redundant cell.
\item If $\sigma\in\image(W),$ then $\sigma$ is called a collapsible cell.
\item If $W(\sigma)=0$ and $\sigma\notin\image(W),$ then $\sigma$ is called a critical cell.
\end{enumerate}
\end{defn}
The critical cells of $X$ carry the most important topological information.  For example, we have the following:
\begin{thm}{\rm\cite{Forman}}\label{thm:CriticalDimension}
If $m_p$ is the number of critical $p$-cells of $X,$ then $X$ is homotopy equivalent to a space consisting of $m_p$ $p$-cells for each $p=0,1,2,\dots.$  In particular, if $M$ is the maximum dimension of a critical cell in $X,$ then $X$ is homotopy equivalent to a space of dimension $M.$
\end{thm}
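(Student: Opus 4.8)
The plan is to follow Forman's original strategy of sweeping through $X$ along an associated discrete Morse function and recording how the homotopy type changes as one passes each critical cell. First I would invoke the standard correspondence between discrete gradient vector fields and discrete Morse functions: because $W$ is acyclic (part of the definition of a discrete gradient vector field), there is a discrete Morse function $f\from X\to\mathbb{R}$ whose induced gradient field is exactly $W.$ This $f$ satisfies $f(W(\sigma))\le f(\sigma)$ for every redundant cell $\sigma,$ may be taken to be injective on the critical cells, and has the property that its critical cells coincide with the critical cells of $W.$ The theorem then becomes a statement about how $f$ organizes the cell structure of $X.$

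Next I would introduce the level subcomplexes $X(c)=\bigcup_{f(\sigma)\le c}\overline{\sigma},$ where $\overline{\sigma}$ is the closed cell (so that each $X(c)$ is genuinely a subcomplex), and study how $X(c)$ evolves as $c$ runs from $-\infty$ to $+\infty,$ with $X(-\infty)=\emptyset$ and $X(+\infty)=X.$ Everything reduces to two local lemmas. The \emph{no-critical-value lemma} states that if the interval $(a,b]$ contains no critical value of $f,$ then $X(b)$ deformation retracts onto $X(a).$ The \emph{critical-value lemma} states that if $(a,b]$ contains exactly one critical value, attained at a single critical $p$-cell, then $X(b)$ is homotopy equivalent to $X(a)$ with a single $p$-cell attached along its boundary.

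For the no-critical-value lemma, I would observe that the cells of $X(b)$ not already in $X(a)$ come in matched pairs $(\sigma,W(\sigma)):$ since no critical (unmatched) cell is crossed, every redundant cell that enters the sublevel set brings in its paired collapsible cell, and conversely. Each such pair is an elementary collapse, with the redundant cell $\sigma$ appearing as a free face of $\tau=W(\sigma),$ and an elementary collapse is a deformation retraction; performing these collapses in a sequence compatible with $W$ (whose existence is guaranteed by acyclicity) retracts $X(b)$ onto $X(a).$ For the critical-value lemma, crossing the critical $p$-cell $\sigma$ simply adjoins $\sigma,$ whose entire boundary already lies in $X(a),$ so $X(b)\simeq X(a)\cup_\varphi e^p$ with attaching map $\varphi$ given by that boundary.

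Finally I would assemble these local results by induction on the finitely many critical values of $f.$ Beginning with the empty complex, each passage that crosses no critical value leaves the homotopy type unchanged, while each passage crossing a single critical $p$-cell attaches one $p$-cell; after all critical values are exhausted one obtains a CW complex homotopy equivalent to $X$ with exactly $m_p$ cells of dimension $p,$ and the ``in particular'' clause follows at once since no cell of dimension exceeding $M$ is ever attached. I expect the main obstacle to be the no-critical-value lemma: one must check that the newly entering cells really do partition into matched pairs, that each is an honest free face, and that the associated elementary collapses can be ordered (using acyclicity of $W$) into a single deformation retraction — this is precisely the point where the acyclicity hypothesis built into the notion of a discrete gradient vector field is indispensable.
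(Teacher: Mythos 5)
The paper does not prove this statement; it is quoted directly from Forman's work, so there is no internal argument to compare against. Your sketch is a faithful reconstruction of Forman's original proof --- passing from the acyclic gradient field to a discrete Morse function, sweeping through level subcomplexes, collapsing matched pairs across non-critical intervals, and attaching one $p$-cell per critical $p$-cell --- and it correctly identifies the no-critical-value lemma (ordering the elementary collapses via acyclicity, with each redundant cell a regular free face of its partner) as the only genuinely delicate step; this is exactly the argument the cited reference supplies, and it suffices here since the complexes in question are finite regular CW complexes.
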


A discrete gradient vector field $W$ may be used to compute the homology of $X$ as follows.  Let $C_p(X)$ denote the free $\Zm{2}$-module generated by the $p$-cells of $X,$ and let $\partial\from C_p(X)\to C_{p-1}(X)$ denote the cellular boundary operator of $X.$  We denote the homology of the complex $(C_*(X),\partial)$ by $H_*(X).$  

We may linearly extend $W$ to a map $W\from C_p(X)\to C_{p+1}(X)$  and define a map 
\[
f\from C_p(X)\to C_p(X)
\] 
by $f=1+\partial W+W\partial$ (here, $1$ represents the identity map).  The map $f$ has the property that for any chain $c\in C_p(X),$ there is some positive integer $m$ such that $f^m(c)=f^{m+1}(c),$ so that $f^\infty$ is well-defined \cite{FarleyHomology}.  

Let $M_p(X)$ denote the free $\Zm{2}$-module on the critical $p$-cells of $X,$ and define $\w{\partial}\from M_p(X)\to M_{p-1}(X)$ by $\w{\partial}=\pi\partial f^\infty,$ where $\pi\from  C_p(X)\to M_p(X)$ is projection onto the critical $p$-cells.  This gives the \emph{Morse Complex} \cite{FarleyFiniteness}
\[
\begin{tikzcd}
\cdots\arrow[r]&M_{p+1}(X)\arrow[r,"\w{\partial}"]&M_{p}(X)\arrow[r,"\w{\partial}"]&M_{p-1}(X)\arrow[r]&\cdots.\label{complex:MorseComplex}
\end{tikzcd}
\]
We denote the homology of the complex $(M_*(X),\w{\partial})$ by $H^\mathcal{M}_*(X).$ 
We have the following result of Farley:

\begin{thm}{\rm\cite{FarleyFiniteness}}\label{thm:MorseComplex}
The map $f^\infty\from M_*(X)\to C_*(X)$ induces an isomorphism $H^\mathcal{M}_*(X)\cong H_*(X).$  For each critical cell $c,$ the chain $f^\infty(c)\in C_*(X)$ satisfies $f^\infty(c)=c+\text{collapsible cells}.$
\end{thm}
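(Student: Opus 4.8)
The plan is to run the standard algebraic discrete Morse theory argument, whose engine is that $f$ is a chain map chain-homotopic to the identity. First I would verify by direct computation, using $\partial^2=0$, that $\partial f=f\partial$, so $f\from C_*(X)\to C_*(X)$ is a chain map; moreover the identity $f-1=\partial W+W\partial$ exhibits $W$ as a chain homotopy between $f$ and the identity, so $f\simeq\mathrm{id}$ and hence $f^m\simeq\mathrm{id}$ for every $m$, and therefore $f^\infty\simeq\mathrm{id}$. Since the excerpt already provides that $f^\infty$ is well-defined, I may set $\Omega_*=\image(f^\infty)$; the relation $f\circ f^\infty=f^\infty$ gives $f^\infty\circ f^\infty=f^\infty$, so $f^\infty$ restricts to the identity on $\Omega_*$. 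Because $f^\infty$ is a chain map, $\partial\Omega_*\subseteq\Omega_*$, so $\Omega_*$ is a subcomplex, and $f^\infty\from C_*(X)\to\Omega_*$ is a chain homotopy equivalence with the inclusion as inverse. In particular $H_*(\Omega_*)\cong H_*(X)$.

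Next I would prove the structural statement, which is the crux: for a critical cell $c$, $f^\infty(c)=c+(\text{collapsible cells})$. The base computation is $f(c)=c+W\partial(c)$, since $W(c)=0$, and $W\partial(c)$ is a sum of collapsible cells because $W$ sends every cell either to $0$ or to a collapsible cell. The essential mechanism is that applying $f$ removes redundant cells: if $\sigma$ is redundant with $W(\sigma)=\tau$, then over $\Zm{2}$ the summand $\sigma+\partial\tau$ occurring in $f(\sigma)$ cancels $\sigma$, replacing it by the other faces of $\tau$ together with $W\partial(\sigma)$. I would package this via the acyclicity built into the definition of a discrete gradient vector field: the absence of nontrivial closed $W$-paths induces a partial order along which $f$ strictly decreases the redundant content, so that the (already guaranteed) stable value $f^\infty$ contains no redundant cells. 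Thus $\Omega_*$ is spanned by critical and collapsible cells, and on a critical cell its critical part is exactly $c$.

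With the structural lemma in hand, I would identify $\Omega_*$ with the Morse complex through the projection $\pi\from C_*(X)\to M_*(X)$ onto critical cells. The identity $f^\infty(c)=c+(\text{collapsible})$ gives $\pi\circ(f^\infty|_{M_*})=\mathrm{id}_{M_*}$, so $f^\infty|_{M_*}$ is injective; combined with the fact that $\Omega_*$ carries no redundant cells, a rank count shows $\pi|_{\Omega_*}$ and $f^\infty|_{M_*}$ are mutually inverse graded isomorphisms. Transporting $\partial|_{\Omega_*}$ across this isomorphism produces precisely $\w{\partial}=\pi\partial f^\infty$, identifying $(\Omega_*,\partial)$ with $(M_*(X),\w{\partial})$ as chain complexes. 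Combining with the previous paragraph yields $H^{\mathcal{M}}_*(X)\cong H_*(\Omega_*)\cong H_*(X)$, the composite isomorphism being induced by $f^\infty\from M_*(X)\to C_*(X)$, which is the first assertion; the second assertion is the structural lemma itself.

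The main obstacle I anticipate is the elimination statement: that $f^\infty$ leaves no redundant cells and that the collapsible content of an element of $\Omega_*$ is determined by its critical part, so that $\pi|_{\Omega_*}$ is injective. This is exactly where the acyclicity of the matching must be used, and it is the one step that cannot be reduced to formal manipulation with $\partial$ and $W$; it is naturally handled by the filtration coming from the gradient flow. Everything else — the chain-map and chain-homotopy identities, the idempotence of $f^\infty$ on its image, and the bookkeeping matching $\pi$ with the inverse of $f^\infty|_{M_*}$ — is routine once this filtration is in place.
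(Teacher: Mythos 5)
The paper does not prove this statement: it is imported verbatim from Farley's work (\cite{FarleyFiniteness}) as background, so there is no internal proof to compare against. Judged on its own, your proposal follows the standard algebraic discrete Morse theory argument and its architecture is sound: $f$ is a chain map chain-homotopic to the identity, $f^\infty$ is an idempotent chain map onto a subcomplex $\Omega_*$ that is a deformation retract of $C_*(X)$, and $\pi$ and $f^\infty|_{M_*}$ identify $\Omega_*$ with $(M_*(X),\w{\partial})$. Two remarks. First, for the structural statement you can avoid the redundancy-elimination machinery entirely: since $W(c)=0$ for critical $c$ and $W\circ W=0$, an induction shows every term of $f^m(c)-c$ lies in $\image(W)$, and every cell in the support of $W(x)$ is collapsible by definition; so $f^\infty(c)=c+\text{collapsible cells}$ falls out of the base computation alone. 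Second, the step you flag as the crux really is the only nontrivial content, and as written it remains a gesture rather than a proof: you need (a) that $f^\infty(x)$ contains no redundant cells for arbitrary $x$, and (b) that a nonzero element of $\image(f^\infty)$ cannot be supported entirely on collapsible cells (this is what makes $\pi|_{\Omega_*}$ injective and closes your rank count). Both require the acyclicity of $W$, typically via a well-founded order on non-critical cells induced by gradient paths along which $f$ strictly pushes support downward; invoking ``the filtration coming from the gradient flow'' names the right tool but does not construct the order or verify that $f$ respects it. Since the theorem is cited rather than proved in the paper, this is an acceptable level of detail for a sketch, but it is the one place where your argument is not yet self-contained.
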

Define $F\from C_p(X)\to C_p(X)$ by $F=1+\partial W.$  The following is useful to simplify the calculation of the boundary map $\w{\partial}$:
\begin{lemma}{\rm\cite{FarleyHomology}}\label{lemma:SimplerBoundary}
The map $F^\infty$ is well defined and satisfies $\w{\partial}=\pi F^\infty\partial.$
\end{lemma}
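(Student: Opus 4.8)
The plan is to reduce the whole statement to the single algebraic identity $\partial f = F\partial$, from which both the well-definedness of $F^\infty$ and the formula $\w{\partial}=\pi F^\infty\partial$ will follow. First I would verify this identity directly. Since $f = 1 + \partial W + W\partial$ and $\partial^2 = 0$, we get $\partial f = \partial + \partial W\partial = (1 + \partial W)\partial = F\partial$. An immediate induction then gives $\partial f^m = F^m\partial$ for every $m\ge 0$, because $\partial f^{m+1} = (\partial f^m)f = F^m(\partial f) = F^m F\partial = F^{m+1}\partial$.

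Next I would address the well-definedness of $F^\infty$. The identity $\partial f^m = F^m\partial$ already shows that $F^m$ stabilizes on every boundary: for a chain $c$ we have $F^m(\partial c) = \partial f^m(c)$, and the right-hand side is eventually constant since $f^m(c)$ stabilizes by hypothesis and $\partial$ is a fixed linear map. For a general chain this is not enough, so here I would appeal to the discrete-Morse structure directly. Note that $F$ fixes every critical and every collapsible cell, since both satisfy $W\sigma = 0$ and hence $F\sigma = \sigma$; and that on a redundant cell $\sigma$ with $W\sigma = \tau$ one has $F\sigma = \sigma + \partial\tau$, which over $\Zm{2}$ is the sum of the facets of $\tau$ other than $\sigma$. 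Thus iterating $F$ advances each redundant cell one step along the gradient paths of $W$. Because $W$ is a genuine discrete gradient vector field it has no nontrivial closed gradient paths, and since $X$ has finitely many cells every gradient path is finite; hence $F^m$ becomes constant on each cell after finitely many steps. This is the same acyclicity argument that establishes the well-definedness of $f^\infty$ in \cite{FarleyHomology}, and I would cite it rather than reproduce it in full.

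With both $f^\infty$ and $F^\infty$ in hand, I would pass to the limit in $\partial f^m = F^m\partial$. Fix a chain $c$; for $m$ large we have $f^m(c) = f^\infty(c)$, so $\partial f^m(c) = \partial f^\infty(c)$, while $F^m(\partial c) = F^\infty(\partial c)$. Since both are the eventual value of the single sequence $\partial f^m(c) = F^m(\partial c)$, they coincide, giving $\partial f^\infty = F^\infty\partial$. Applying the projection $\pi$ and using the definition $\w{\partial} = \pi\partial f^\infty$ then yields $\w{\partial} = \pi F^\infty\partial$, as claimed. It is worth observing that for the formula itself $F^\infty$ is only ever evaluated on boundaries (namely on $\partial c$ for $c$ a critical cell), so the restricted stabilization noted above would already suffice for this step.

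I expect the main obstacle to be the full well-definedness of $F^\infty$ on arbitrary chains rather than merely on boundaries: the identity $\partial f^m = F^m\partial$ handles the image of $\partial$ for free, but controlling the iteration of $F$ on redundant cells genuinely requires the acyclicity of the gradient field, which is why I would lean on the argument already in \cite{FarleyHomology}. Once that is granted, the identity $\partial f = F\partial$ does all the remaining work and the passage to the limit is routine.
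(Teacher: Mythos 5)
Your proof is correct. The paper itself gives no argument for this lemma --- it is quoted directly from \cite{FarleyHomology} --- so there is nothing to compare against beyond that source; your reduction to the identity $\partial f = F\partial$ (immediate from $\partial^2=0$), the induction $\partial f^m = F^m\partial$, and the passage to the limit is exactly the standard argument, and you are right both to flag that the well-definedness of $F^\infty$ on arbitrary chains (as opposed to boundaries, where it is free) rests on the acyclicity of the gradient paths, and that for the displayed formula the restriction to boundaries would already suffice since $\w{\partial}$ only ever applies $F^\infty$ to chains of the form $\partial c$.
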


\subsection{The Farley-Sabalka discrete gradient vector field}\label{subsec:FSDGVF}

Farley and Sabalka construct a discrete gradient vector field to study the unordered discrete configuration space $\UD{n}{\Gamma}.$  Before describing their construction, we first recall their ordering on the vertices of $\Gamma.$  To order the vertices, first choose a spanning tree $T$ of $\Gamma,$ choose an embedding of $T$ in the plane, and choose a vertex of degree one in $T$ to be labeled $\ast.$  Assign $\ast$ the number 0, then travel away from $\ast$ along $T$ and label the vertices of $\Gamma$ in order as they are first encountered.  Whenever a vertex which is essential in $T$ is encountered, travel along the leftmost edge first, then turn around whenever a vertex of degree one in $T$ is encountered.  Continuing in this manner assigns numbers to each vertex of $\Gamma.$  An example is given in Figure \ref{fig:VertexOrdering}.  We indicate the edges contained in the spanning tree $T$ with solid lines and edges of the graph $\Gamma$ which are not contained in the spanning tree with dashed lines.  We refer to the latter edges as \emph{deleted edges}.  Notice the graph $\Gamma$ is only sufficiently subdivided for $n=2.$

\begin{figure}[htb]
    \centering
    \includegraphics[height=6cm]{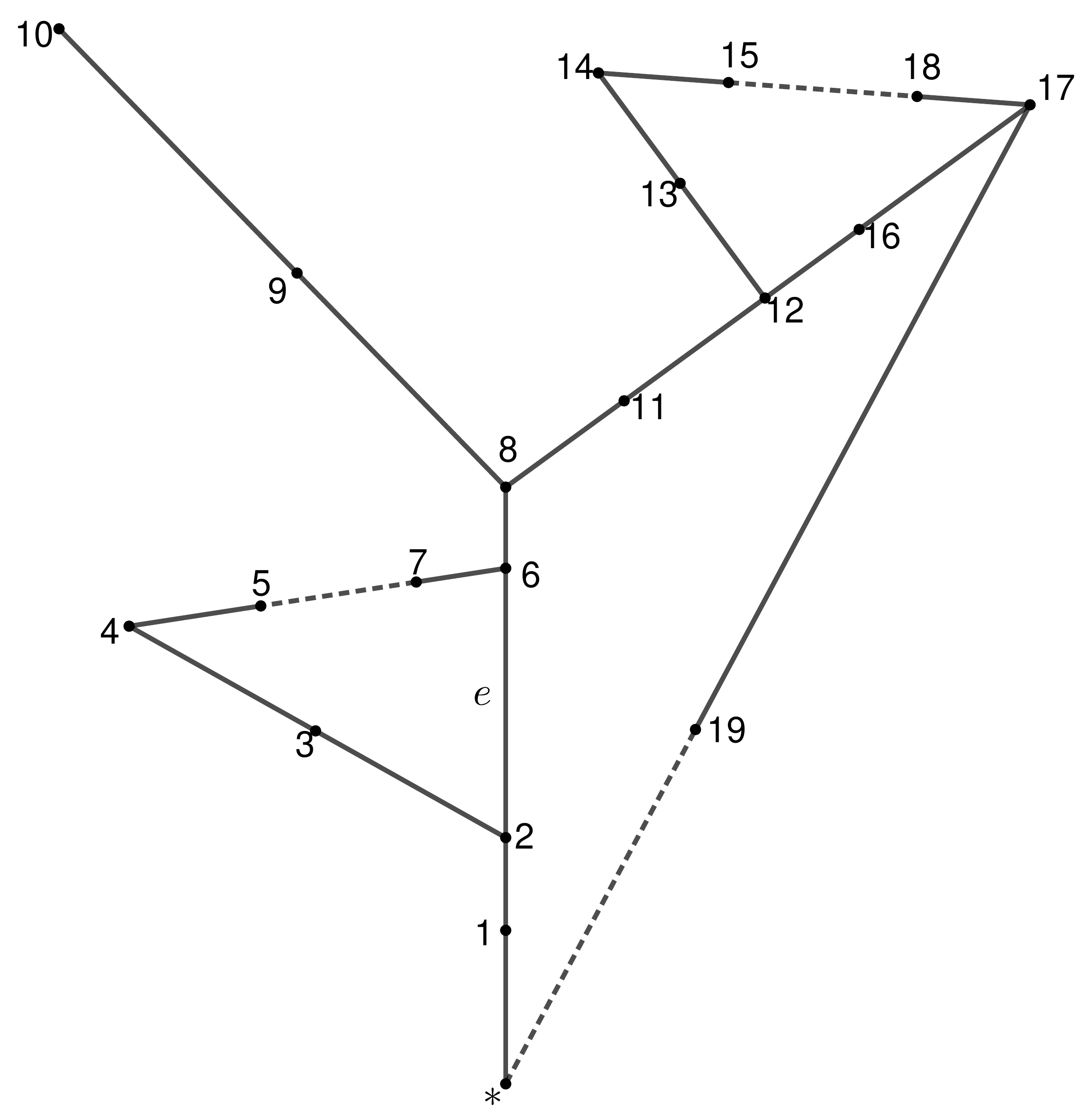}
    \caption{The ordering of the vertices of $\Gamma$}
    \label{fig:VertexOrdering}
\end{figure}

For an edge $e$ of $\Gamma,$ let $\iota(e)$ and $\tau(e)$ be the endpoints of $e,$ with $\tau(e)<\iota(e)$ in this ordering.  In Figure \ref{fig:VertexOrdering}, we have $\tau(e)=2$ and $\iota(e)=6.$  There is a notion of ``directions'' from each vertex $v\ne\ast$ of $\Gamma.$  These directions can be viewed as a numbering of the edges of $T$ incident to $v,$ clockwise from 0 to $\deg_T(v)-1,$ with 0 being the direction of the unique edge incident to $v$ which falls on the $T$-geodesic from $v$ to $\ast.$  For each vertex $u\ne\ast$ of $\Gamma,$ let $e^i(u)$ denote the edge in $T$ which is incident to $u$ in direction $i,$ let $v^i(u)=\iota(e^i(u))$ for $i\ne0,$ and let $v^0(u)=\tau(e^0(u)).$  In Figure \ref{fig:VertexOrdering}, the edge $e$ satisfies $e=e^2(2)=e^0(6),$ and the vertex 6 satisfies $6=v^2(2)=v^0(8).$

From here on, when referring to a cell, we mean the closure of that cell, so that we view cells of $\UD{n}{\Gamma}$ simply as collections of vertices and edges of $\Gamma.$  Consider a $k$-dimensional cell $c=\{e_1,\dots,e_k,v_1,\dots,v_{n-k}\}$ in $\UD{n}{\Gamma},$ where each $e_i$ is an edge of $\Gamma$ and each $v_i$ is a vertex of $\Gamma.$  For each vertex $v_i$ appearing in $c,$ if the edge $e^0(v_i)$ intersects some vertex $v_j\ne v_i$ or some edge $e_j$ appearing in $c,$ the vertex $v_i$ is said to be \emph{blocked} in $c.$  The vertex $\ast$ is also said to be blocked in any cell in which it appears.  Otherwise, $v_i$ is said to be \emph{unblocked} in $c.$  An edge $e_i$ in $c$ is said to be \emph{order-disrespecting in $c$} if either $e_i$ is an edge in $T$ and $c$ contains a vertex $v$ such that $v^0(v)=\tau(e_i)$ and $v<\iota(e_i),$ or if $e_i$ is a deleted edge.   In particular, if $e$ is an edge of $T$ which is order-disrespecting in some cell, then $\tau(e)$ must be an essential vertex.

We can now describe Farley and Sabalka's discrete gradient vector field on $\UD{n}{\Gamma}$ \cite{FarleySabalkaGraphBraidGroups}.  Recall this is a map $W$ which assigns to each $i$-dimensional cell either an $(i+1)$-cell or the element 0.  The construction of $W$ is given inductively by dimension.  Let $c$ be a 0-dimensional cell of $\UD{n}{\Gamma}.$  If every vertex of $c$ is blocked, let $W(c)=0.$  Otherwise, $c$ contains at least one unblocked vertex, and therefore it contains a minimal unblocked vertex $v$ (with respect to the ordering on the vertices).  In this case, let $W(c)$ be the one-dimensional cell obtained from $c$ be replacing $v$ with $e^0(v).$  Inductively, after $W$ is defined on the $(i-1)$-cells, define $W$ on the $i$-dimensional cells as follows.  For each $i$-cell $c,$ if each vertex of $c$ is blocked, or if $c=W(c')$ for some $(i-1)$-cell $c',$ let $W(c)=0.$  Otherwise, $c$ again contains a minimal unblocked vertex $v,$ and we let $W(c)$ be the $(i+1)$-cell obtained from $c$ by replacing $v$ with $e^0(v).$  Farley and Sabalka show that $W$ satisfies the properties of a discrete gradient vector field, which in turn classifies each cell as either redundant, collapsible, or critical. 

\begin{thm}{\rm\cite{FarleySabalkaGraphBraidGroups}}\label{thm:ClassificationOfCells}
Let $c$ be a cell of $\UD{n}{\Gamma}.$
\begin{enumerate}
\item The cell $c$ is redundant if and only if $c$ contains an unblocked vertex $v$ such that any order-respecting edge $e$ of $c$ satisfies $\iota(e)>v.$
\item The cell $c$ is collapsible if and only if $c$ contains some order respecting edge $e$ such that any vertex $v$ of $c$ satisfying $v<\iota(e)$ is blocked.
\item The cell $c$ is critical if and only if each vertex of $c$ is blocked and each edge of $c$ is order-disrespecting.
\end{enumerate}
\end{thm}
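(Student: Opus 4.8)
The plan is to argue directly from the inductive definition of the Farley--Sabalka field $W$, using the trichotomy of Definition \ref{defn:CellTypes}: a cell is redundant exactly when $W(c)\neq 0$, collapsible exactly when $c\in\image(W)$, and critical exactly when neither holds. By construction $W(c)\neq 0$ means $c$ has an unblocked vertex and $c\notin\image(W)$, while $W(c)=0$ means that either every vertex of $c$ is blocked or $c\in\image(W)$. Thus all three statements reduce to a single task: characterize the image of $W$ (part (2)), after which parts (1) and (3) follow by Boolean bookkeeping. I would first record the preliminary fact that for a tree edge $e$ the smaller endpoint is its $W$-parent, i.e. $\tau(e)=v^0(\iota(e))$ and hence $e=e^0(\iota(e))$; this is immediate from the depth-first nature of the vertex ordering. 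I would also note that an order-respecting edge is automatically a tree edge, since deleted edges are declared order-disrespecting.

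The heart of the proof is the following description of $\image(W)$: a cell $c$ equals $W(c')$ for some $c'$ if and only if $c$ contains an order-respecting edge $e$ such that every vertex $u<\iota(e)$ of $c$ is blocked in $c$; moreover the relevant $c'$ is then $(c\setminus\{e\})\cup\{\iota(e)\}$, obtained by un-collapsing $e$ to its top vertex. To prove this I would compare ``blocked in $c$'' with ``blocked in $c':=(c\setminus\{e\})\cup\{\iota(e)\}$,'' the only difference between the two cells being that $e$ is replaced by the vertex $\iota(e)$. Since $W(c')=c$ is equivalent to $\iota(e)$ being the minimal unblocked vertex of $c'$, and since $\iota(e)$ is automatically unblocked in $c'$ (the closure of $e=e^0(\iota(e))$ met nothing in $c$, and $\tau(e)$ is not a vertex of $c'$), the condition is precisely that every vertex $u<\iota(e)$ of $c'$ is blocked in $c'$.

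For the equivalence of this with the stated combinatorial condition I would establish two comparisons. First, for any vertex $u<\iota(e)$ the newly inserted vertex $\iota(e)$ cannot lie on $e^0(u)$ (its lower endpoint $v^0(u)$ is below $u<\iota(e)$), so any blocker of $u$ in $c'$ is already a blocker of $u$ in $c$; this supplies the ``every $u<\iota(e)$ blocked in $c$'' clause for free and handles the forward direction of part (2). Second, and this is the step I expect to be the main obstacle, I must show the detecting edge is forced to be order-respecting: if $e$ were order-disrespecting there would be a witness $v\in c$ with $v^0(v)=\tau(e)$ and $v<\iota(e)$, and a disjointness argument in the valid cell $c$ shows that the only element of $c$ capable of blocking such a $v$ is $e$ itself (anything else meeting $e^0(v)=(\tau(e),v)$ would collide with the vertex $v$ or with the edge $e$ at $\tau(e)$). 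Consequently $v$ becomes unblocked in $c'$, contradicting minimality of $\iota(e)$. Conversely, when $e$ is order-respecting this same analysis shows that no vertex $u<\iota(e)$ can be blocked by $e$, so ``blocked in $c$'' and ``blocked in $c'$'' agree on these vertices and the converse of part (2) goes through.

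Finally I would deduce parts (1) and (3) formally. Letting $v$ denote the minimal unblocked vertex, part (2) says $c$ is collapsible precisely when some order-respecting edge $e$ has $\iota(e)\le v$; negating, a cell with an unblocked vertex is non-collapsible precisely when every order-respecting edge satisfies $\iota(e)>v$, which, since redundant means having an unblocked vertex and lying outside $\image(W)$, is exactly part (1). For part (3), critical means $W(c)=0$ and $c\notin\image(W)$, i.e. every vertex of $c$ is blocked and $c$ is non-collapsible; but when every vertex is blocked the clause ``every $u<\iota(e)$ blocked'' is vacuous, so non-collapsibility reduces to the absence of any order-respecting edge, i.e. every edge of $c$ is order-disrespecting, yielding part (3).
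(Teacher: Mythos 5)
First, a point of reference: the paper does not prove this statement at all --- it is imported verbatim from Farley--Sabalka's work on graph braid groups --- so there is no in-paper argument to compare against, and your proposal has to stand on its own. Your overall architecture is sound: reducing everything to a characterization of $\image(W)$ is the right move, the deductions of parts (1) and (3) from part (2) are correct Boolean bookkeeping, the observation that order-respecting edges are tree edges with $e=e^0(\iota(e))$ is correct, and your disjointness analysis (the unique possible blocker of a witness $v$ with $v^0(v)=\tau(e)$ is the edge $e$ itself; the inserted vertex $\iota(e)$ cannot block any $u<\iota(e)$) is exactly the right local computation. The forward direction of part (2) is complete.

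The genuine gap is in the converse of part (2), and it sits in the sentence ``$W(c')=c$ is equivalent to $\iota(e)$ being the minimal unblocked vertex of $c'$.'' That equivalence is false: by the inductive definition of $W$, we have $W(c')=0$ whenever $c'\in\image(W)$, \emph{even if} $c'$ has unblocked vertices. (You correctly invoke this clause when unpacking ``redundant'' in your first paragraph, but you drop it at the key step.) So from ``$e$ is order-respecting and every vertex of $c$ below $\iota(e)$ is blocked'' you may only conclude that $\iota(e)$ is the minimal unblocked vertex of $c'=(c\setminus\{e\})\cup\{\iota(e)\}$; you have not ruled out that $c'$ is itself collapsible, in which case $W(c')=0$ and this particular $c'$ does not witness $c\in\image(W)$. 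The standard repair requires two additional ingredients you omit: induct on dimension so that the characterization of $\image(W)$ is available for $c'$, and choose the detecting edge $e$ with $\iota(e)$ \emph{minimal} among all order-respecting edges of $c$ satisfying the blocking condition. If $c'$ were then in $\image(W)$, the inductive hypothesis would give an order-respecting edge $e'$ of $c'$ with every vertex of $c'$ below $\iota(e')$ blocked; since $\iota(e)$ is unblocked in $c'$ one gets $\iota(e')<\iota(e)$, and a transfer argument (of the same flavor as your blocking comparisons) shows $e'$ is a detecting edge for $c$ as well, contradicting minimality. Without this descent, the converse --- and hence the ``only if'' half of part (1) and the ``if'' half of part (3) --- is not established.
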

Using this description of the classification of cells together with Theorem \ref{thm:CriticalDimension}, Farley and Sabalka show the following:
\begin{thm}{\rm\cite{FarleySabalkaGraphBraidGroups}}\label{thm:UDnUpperBound}
Let $\beta=\beta_1(\Gamma)$ be the first Betti number of $\Gamma,$ and let $K=\min\left\{n,\left\lfloor\frac{n+\beta}{2}\right\rfloor,m(\Gamma)\right\}.$  A spanning tree can be chosen such that $\dim(c)\le K$ for any critical cell $c.$  In particular, $\UD{n}{\Gamma}$ is homotopy equivalent to a space of dimension $K.$
\end{thm}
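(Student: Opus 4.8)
The plan is to apply Theorem~\ref{thm:CriticalDimension}, which reduces the claim to finding a spanning tree for which every critical cell of $\UD{n}{\Gamma}$ has dimension at most $K$. By Theorem~\ref{thm:ClassificationOfCells}(3) a critical cell $c$ is exactly one in which every vertex is blocked and every edge is order-disrespecting, and since $\dim(c)$ is the number of edges in the collection $c$ (the remaining $n-\dim(c)$ entries being vertices), it suffices to bound this number of edges by each of $n$, $\lfloor(n+\beta)/2\rfloor$, and $m(\Gamma)$ separately. The bound $\dim(c)\le n$ is immediate, as $c$ consists of $n$ cells of $\Gamma$ in total.

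For the bound $\dim(c)\le\lfloor(n+\beta)/2\rfloor$, I would split the edges of $c$ into the $a$ deleted edges and the $b$ order-disrespecting tree edges, so $\dim(c)=a+b$. Since exactly $\beta$ edges of $\Gamma$ lie outside any spanning tree, $a\le\beta$. The definition of an order-disrespecting tree edge $e$ supplies a \emph{witness} vertex $v$ of $c$ with $v^0(v)=\tau(e)$ and $v<\iota(e)$. Two distinct order-disrespecting tree edges cannot share a witness: a common witness forces their $\tau$-values to coincide, which makes both edges incident to a common vertex and violates the disjointness of the cells comprising $c$. Hence the $b$ witnesses are distinct vertices of $c$, giving $b\le n-a-b$, i.e. $a+2b\le n$. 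Combining this with $a\le\beta$ yields $\dim(c)=a+b\le(a+n)/2\le(n+\beta)/2$, and integrality gives the floor.

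For the bound $\dim(c)\le m(\Gamma)$ the choice of spanning tree is the crux. Because $\Gamma$ is connected and has an essential vertex, every cycle of $\Gamma$ meets an essential vertex, since a cycle all of whose vertices have degree $2$ would be a connected component on its own. It follows that the edges having both endpoints non-essential contain no cycle, so I may choose the spanning tree $T$ to contain all of them; then every deleted edge has at least one essential endpoint. I would then define an assignment from the edges of a critical cell $c$ to essential vertices by sending a tree edge $e$ to $\tau(e)$ (which is essential precisely because $e$ is order-disrespecting) and sending a deleted edge to one of its essential endpoints. Since the closures of distinct cells in $c$ are disjoint, no two edges of $c$ share a vertex, so the assignment is injective, giving $\dim(c)\le m(\Gamma)$. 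Taking the minimum of the three bounds gives $\dim(c)\le K$, and Theorem~\ref{thm:CriticalDimension} then produces the desired homotopy equivalence.

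The main obstacle is this third bound, and specifically the spanning-tree construction that forces every deleted edge to be incident to an essential vertex while remaining compatible with the data the Farley--Sabalka ordering requires (a degree-one basepoint $\ast$, a planar embedding, and the induced numbering of directions). This step is also where the presence of an essential vertex is genuinely used: for a single subdivided cycle one has $m(\Gamma)=0$ yet a critical $1$-cell exists, so the injective assignment above, and hence the bound by $m(\Gamma)$, can only be arranged once at least one essential vertex is available to receive a deleted edge.
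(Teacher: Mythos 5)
This theorem is quoted from Farley--Sabalka and the paper offers no proof of its own, so there is nothing internal to compare against; your argument is, as far as I can tell, a correct reconstruction of the counting argument in the cited reference. The three bounds are each sound: $\dim(c)\le n$ is trivial; the witness argument for order-disrespecting tree edges is right (a shared witness $v$ would force $\tau(e)=v^0(v)=\tau(e')$, violating disjointness of closures, so the $b$ witnesses are distinct vertices of $c$ and $a+2b\le n$ with $a\le\beta$); and the charge of each edge of a critical cell to a distinct essential endpoint works once the spanning tree contains every edge with two non-essential endpoints, which is possible exactly because such edges form a forest in any connected graph that is not itself a cycle. Two small remarks. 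First, the ``compatibility'' worry you raise at the end is not a real obstacle: every tree has a leaf to serve as $\ast$, every tree embeds in the plane, and your first two bounds hold for any choice of tree, basepoint, and embedding, so only the deleted-edge condition constrains the construction. Second, your observation about the subdivided circle is a genuine defect of the \emph{statement} as quoted rather than of your proof: for $\Gamma\simeq S^1$ one has $m(\Gamma)=0$ yet a critical $1$-cell survives, so the theorem implicitly assumes $m(\Gamma)\ge1$ (as do the sources); under that hypothesis your argument goes through in full.
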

This result gives general upper bounds on $\TC(\UD{n}{\Gamma})$: 

\begin{thm}\label{thm:TCUDnUB}
For any graph $\Gamma,$ we have $\TC(\UD{n}{\Gamma})\le2K+1,$ where $K=\min\left\{n,\left\lfloor\frac{n+\beta}{2}\right\rfloor,m(\Gamma)\right\}.$
\end{thm}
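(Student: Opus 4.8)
The plan is to combine the dimension bound from Theorem \ref{thm:UDnUpperBound} with the general properties of topological complexity recorded in Theorem \ref{thm:TCTools}; the statement is essentially an immediate corollary of these two results. First I would invoke Theorem \ref{thm:UDnUpperBound} to choose a spanning tree of $\Gamma$ for which the associated Farley--Sabalka discrete gradient vector field has no critical cell of dimension exceeding $K=\min\left\{n,\left\lfloor\frac{n+\beta}{2}\right\rfloor,m(\Gamma)\right\}.$ Via Theorem \ref{thm:CriticalDimension}, the existence of such a field guarantees that $\UD{n}{\Gamma}$ is homotopy equivalent to a CW complex $Y$ built from one cell of each dimension per critical cell, so that $\dim(Y)\le K.$

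Next I would apply the homotopy invariance of $\TC$ from Theorem \ref{thm:TCTools}(1) to reduce the problem to bounding $\TC(Y),$ and then the dimension estimate of Theorem \ref{thm:TCTools}(2) to conclude $\TC(Y)\le 2\dim(Y)+1\le 2K+1.$ Chaining these together yields $\TC(\UD{n}{\Gamma})=\TC(Y)\le 2K+1,$ which is exactly the asserted inequality.

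Since both ingredients are already established in the excerpt, there is no genuine obstacle here. The only point deserving a moment's care is that Theorem \ref{thm:UDnUpperBound} provides a \emph{homotopy equivalence} rather than an equality of spaces or an honest CW structure on $\UD{n}{\Gamma}$ itself; this is precisely why the homotopy invariance statement in Theorem \ref{thm:TCTools}(1) must be applied before the dimension bound of Theorem \ref{thm:TCTools}(2) can be invoked, rather than attempting to bound $\TC(\UD{n}{\Gamma})$ from its own (generally larger) cellular dimension.
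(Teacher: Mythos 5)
Your proposal is correct and follows exactly the paper's argument, which cites Theorems \ref{thm:TCTools} and \ref{thm:UDnUpperBound} and leaves the chaining of homotopy invariance and the dimension bound implicit. Your additional remark about why homotopy invariance must be applied before the dimension estimate is a sensible elaboration of the same reasoning.
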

\begin{proof}
This follows immediately from Theorems \ref{thm:TCTools} and \ref{thm:UDnUpperBound}.
\end{proof}
\subsection{Homology and Cohomology}\label{subsec:Cohomology}
We now turn to a discussion of the homology groups and cohomology ring of $\UD{n}{\Gamma}.$  Our main goal in this subsection is to describe cohomological information in Lemmas \ref{lemma:Products} and \ref{lemma:DistinctHomologyClasses} which, together with Theorem \ref{thm:TCTools} will provide lower bounds on $\TC(\UD{n}{\Gamma})$ in the proofs of the main results of Section \ref{sec:MainResults}.

According to Theorem \ref{thm:MorseComplex}, we can compute the homology groups $H_*(\UD{n}{\Gamma})$ by studying the boundary operator $\w{\partial}=\pi F^\infty\partial.$  For a cell $c$ containing edges $e_1,\dots,e_k,$ we can without loss of generality assume that the edges satisfy $\iota(e_i)<\iota(e_{i+1})$ for each $i.$  The cellular boundary $\partial$ can be given as 
\[
\partial(c)=\sum_{i=1}^k(c_{\iota(e_i)}+c_{\tau(e_i)}),
\]
where $c_{\tau(e)}$ is the cell obtained from $c$ by replacing the edge $e$ with the endpoint $\tau(e),$ and $c_{\iota(e)}$ is defined similarly.  For the case in which $\Gamma$ is a tree, Farley and Sabalka show that the maps $\w{\partial}$ are as simple as possible.  
\begin{thm}{\rm\cite{FarleySabalka}}\label{thm:TreeBoundaries}
Let $\Gamma$ be a tree.  The boundary maps $\w{\partial}$ in $M_*(\UD{n}{\Gamma})$ are all trivial.  In particular, $H_i(\UD{n}{\Gamma})$ is isomorphic to a free $\Zm{2}$-module on the critical $i$-cells.
\end{thm}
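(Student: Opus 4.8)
The plan is to verify that $\widetilde{\partial}c = 0$ for every critical cell $c$, using the simplified formula $\widetilde{\partial} = \pi F^\infty\partial$ from Lemma~\ref{lemma:SimplerBoundary}; the homology statement then follows immediately. Since $\Gamma$ is a tree, the chosen spanning tree is all of $\Gamma$, so there are no deleted edges. Consequently an edge $e$ of a cell is order-disrespecting exactly when the cell contains a vertex $v$ with $v^0(v) = \tau(e)$ and $v < \iota(e)$ (the ``deleted edge'' clause of the definition never applies). Fix a critical $i$-cell $c = \{e_1,\dots,e_i,v_1,\dots,v_{n-i}\}$ with $\iota(e_1) < \cdots < \iota(e_i)$; by Theorem~\ref{thm:ClassificationOfCells}(3) every vertex of $c$ is blocked and every edge is order-disrespecting. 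I would then compute $\pi F^\infty\partial c$, starting from $\partial c = \sum_{j=1}^i\bigl(c_{\iota(e_j)} + c_{\tau(e_j)}\bigr)$.

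First I would classify the $2i$ faces. For an ``upper'' face $c_{\iota(e_j)}$, the edge $e_j = e^0(\iota(e_j))$ has been replaced by the vertex $\iota(e_j)$; since in $c$ the closed edge $e_j$ was disjoint from every other cell, $e^0(\iota(e_j))$ meets no other cell of $c_{\iota(e_j)}$, so $\iota(e_j)$ is unblocked there, and by Theorem~\ref{thm:ClassificationOfCells}(3) the face $c_{\iota(e_j)}$ is not critical. Moreover, if $\iota(e_j)$ were the minimal unblocked vertex of $c_{\iota(e_j)}$ then $W(c_{\iota(e_j)})$ would reinsert $e^0(\iota(e_j)) = e_j$ and return $c$, making $c$ collapsible and contradicting criticality; hence $c_{\iota(e_j)}$ is redundant and flows nontrivially under $F = 1 + \partial W$. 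For a ``lower'' face $c_{\tau(e_j)}$, I would use the order-disrespect of $e_j$ to locate the witnessing vertex $v$ with $e^0(v) = [\tau(e_j),v]$ and read off the blocking behavior of the new vertex $\tau(e_j)$ and of $v$, thereby deciding whether each such face is critical, collapsible, or redundant.

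The heart of the argument is the behavior of $F^\infty$: iterating $F$ flows each redundant face along the gradient, replacing a redundant cell $\sigma$ with $W(\sigma)=\tau$ by the remaining codimension-one faces of $\tau$, and stabilizes on a $\Zm{2}$-combination of critical and collapsible cells, after which $\pi$ discards the collapsible terms. I expect the main obstacle to be showing that, after this flow, the total mod-$2$ multiplicity of each critical $(i-1)$-cell is even — equivalently, that the gradient paths emanating from the faces of $c$ and terminating at a fixed critical $(i-1)$-cell occur in cancelling pairs. This is precisely where acyclicity of the tree is essential: because geodesics to $\ast$ are unique and there are no deleted edges to create an alternate route, a critical $(i-1)$-cell reached from one face of $c$ is forced to be reached from a second face contributing an identical term over $\Zm{2}$ (in the one-edge case this is just the pairing of $c_{\iota(e_1)}$ with $c_{\tau(e_1)}$). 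Carrying out this pairing carefully — tracking how blocked/unblocked status and order-(dis)respect propagate along each gradient path — is the technical core of the proof.

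Granting $\widetilde{\partial} = 0$, Theorem~\ref{thm:MorseComplex} identifies $H_i(\UD{n}{\Gamma})$ with $H_i^{\mathcal{M}}(\UD{n}{\Gamma}) = M_i(\UD{n}{\Gamma})$, the free $\Zm{2}$-module on the critical $i$-cells, which is the final claim.
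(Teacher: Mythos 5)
Your plan points in the right direction, but it stops short of the statement's actual proof: the paper itself does not prove this theorem (it is quoted from Farley--Sabalka), and the argument it relies on is exactly the one reconstructed in generalized form in Section \ref{sec:DMT} via Lemma \ref{lemma:Rinfty}, Lemma \ref{lemma:BlockedInInterval}, and Corollary \ref{cor:DeletedEdgeBoundary}. Measured against that, the decisive step of your proposal --- the claim that the contributions to each critical $(i-1)$-cell ``occur in cancelling pairs'' --- is announced as the technical core but never established, and it is not a global parity statement about gradient paths. The cancellation is edge-by-edge: for each edge $e_j$ of the critical cell $c$ one proves the identity $\pi F^\infty(c_{\iota(e_j)})=\pi F^\infty(c_{\tau(e_j)})$, so that $\w{\partial}c=\sum_j\bigl(\pi F^\infty(c_{\iota(e_j)})+\pi F^\infty(c_{\tau(e_j)})\bigr)=0$ over $\Zm{2}$. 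The tool that makes this work is the reduction identity $\pi F^\infty(\sigma)=\pi F^\infty(r(\sigma))$ for redundant $\sigma$ (Lemma \ref{lemma:Rinfty}): iterating $r$ slides the free vertex created by collapsing $e_j$ down the unique $T$-geodesic toward $\ast$, and since in a tree $\wedge e_j=\tau(e_j)$, the two faces reduce to a common cell. Proving that identity in turn requires showing that the \emph{other} codimension-one faces of $W(\sigma)$ produced by $\partial W$ vanish under $\pi F^\infty$, which is precisely the content of Lemma \ref{lemma:BlockedInInterval}. You invoke the mechanism (``replacing a redundant cell by the remaining faces of $W(\sigma)$'') without supplying either of these lemmas, so the vanishing is unsupported.

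Two smaller points. First, your argument that $c_{\iota(e_j)}$ is redundant is roundabout and slightly off-target: the clean route is that it contains the unblocked vertex $\iota(e_j)$ (so it is not critical) and contains no order-respecting edges, since every $e_l$, $l\ne j$, keeps its witnessing vertex (so it is not collapsible by Theorem \ref{thm:ClassificationOfCells}(2)); redundancy follows. Second, the reduction sequence is not simply ``slide $\iota(e_j)$ down'': deleting $e_j$ may unblock other vertices $w$ of $c$ with $v^0(w)=\tau(e_j)$, and one of these may be the minimal unblocked vertex at some stage, so tracking which vertex moves at each step --- and verifying that the process nonetheless terminates at the common cell --- is part of what the cited lemmas handle. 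The final paragraph of your proposal (deducing the homology statement from $\w{\partial}=0$ via Theorem \ref{thm:MorseComplex}) is correct as stated.
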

In theory, it is possible to compute the boundaries $\w{\partial}$ in $M_*(\UD{n}{\Gamma})$ where $\Gamma$ is any graph, but in practice, this is often a difficult computation when approached directly.  However, Theorem \ref{thm:TreeBoundaries} can be used in the following simple situation.

\begin{cor}\label{cor:CellsInTrees}
If $c$ is a critical cell in $\UD{n}{\Gamma}$ which does not contain any deleted edges, then $\w{\partial}(c)=0.$
\end{cor}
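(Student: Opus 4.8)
The plan is to show that, for a cell $c$ with no deleted edges, every ingredient in the definition of $\w{\partial}(c)=\pi F^\infty\partial(c)$ can be computed entirely inside the subcomplex of $\UD{n}{\Gamma}$ spanned by cells whose edges all lie in the chosen spanning tree $T$, and that on this subcomplex the entire Farley--Sabalka apparatus coincides with the one built for the tree $T$ itself. The corollary then follows at once from Theorem \ref{thm:TreeBoundaries}.

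First I would observe that the subcomplex $\mathcal{T}\subset\UD{n}{\Gamma}$ consisting of cells with no deleted edges is precisely $\UD{n}{T}$. Every notion used to define $W$ --- the vertex ordering, the directions $e^i(v)$ and vertices $v^i(v)$, the blocked/unblocked dichotomy, and the order-(dis)respecting classification of edges --- is defined purely in terms of $T$. In particular $e^0(v)$ is always a $T$-edge, so $W$ sends a cell of $\mathcal{T}$ either to $0$ or to the cell of $\mathcal{T}$ obtained by replacing the minimal unblocked vertex $v$ with $e^0(v)$; hence $W(\mathcal{T})\subseteq\mathcal{T}$. A short induction on dimension, using that the face $c'$ with $W(c')=c$ also lies in $\mathcal{T}$ whenever $c$ does (it is obtained by collapsing a $T$-edge of $c$ to an endpoint), shows that the restriction of $W$ to $\mathcal{T}$ agrees with the Farley--Sabalka gradient vector field built on $T$ with $T$ as its own spanning tree.

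Next I would check that $\mathcal{T}$ is invariant under all maps used to form $\w{\partial}$. By the boundary formula $\partial(c)=\sum_i(c_{\iota(e_i)}+c_{\tau(e_i)})$, replacing an edge of a cell in $\mathcal{T}$ by an endpoint never introduces a deleted edge, so $\partial$ preserves $\mathcal{T}$; combined with $W(\mathcal{T})\subseteq\mathcal{T}$, the map $F=1+\partial W$ preserves $\mathcal{T}$, and therefore so does $F^\infty$ (Lemma \ref{lemma:SimplerBoundary}). Finally, by Theorem \ref{thm:ClassificationOfCells}(3), a critical cell with no deleted edges is one in which every vertex is blocked and every edge is an order-disrespecting $T$-edge; these are exactly the critical cells of $\UD{n}{T}$, so the projection $\pi$ onto the critical cells of $\UD{n}{\Gamma}$ restricts, on $\mathcal{T}$, to the projection onto the critical cells of $\UD{n}{T}$.

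Putting these facts together, for a critical cell $c$ with no deleted edges the chain $\partial(c)$ lies in $\mathcal{T}$, the map $F^\infty$ keeps it in $\mathcal{T}$, and on $\mathcal{T}$ the composite $\pi F^\infty\partial$ is literally the Morse boundary operator of $\UD{n}{T}$ applied to $c$. By Theorem \ref{thm:TreeBoundaries} this operator vanishes, so $\w{\partial}(c)=0$. The one point requiring genuine care --- and the step I expect to be the main obstacle --- is the verification that the Farley--Sabalka construction is \emph{local} in this sense: I must confirm not merely that $W$, $F$, $F^\infty$, and $\pi$ land in $\mathcal{T}$, but that their restrictions faithfully reproduce the tree machinery, which in turn requires checking that the inductive ``collapsible'' clause in the definition of $W$ never depends on cells lying outside $\mathcal{T}$.
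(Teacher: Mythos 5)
Your proposal is correct and follows the same route as the paper, whose entire proof is the one-line remark that the claim ``follows immediately from Theorem \ref{thm:TreeBoundaries} and the definition of $W$''; you have simply made explicit the localization argument that this remark takes for granted. In particular, your checks that $W$, $\partial$, $F^\infty$, and $\pi$ all preserve the subcomplex of deleted-edge-free cells and restrict there to the tree machinery for $\UD{n}{T}$ are exactly the details the paper is implicitly invoking.
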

\begin{proof}
This follows immediately from Theorem \ref{thm:TreeBoundaries} and the definition of $W.$
\end{proof}

Next, we describe the equivalence relation on cells given in \cite{FarleySabalkaCohomology}, which is used in the description of the cup product structure.

\begin{defn}{\rm\cite{FarleySabalkaCohomology}}\label{defn:EquivClassesTree}
Given two cells $c$ and $c'$ of $\UD{n}{\Gamma},$ where $\Gamma$ is an arbitrary graph, define $``\sim"$ by $c\sim c'$ if and only if $c$ and $c'$ share the same edges (so in particular $c$ and $c'$ are of the same dimension), and if $E$ is the union of edges in $c$ (and in $c'$), then for every connected component $C$ of $\Gamma-E,$ the number of vertices of $c$ in $C$ equals the number of vertices of $c'$ in $C.$
\end{defn}  
We will sometimes write $c\sim_\Gamma c'$ to emphasize the underlying graph.  Let $[c]$ (or $[c]_\Gamma$) denote the equivalence class of $c.$ We will denote the set of the edges in any cell of $[c]$ by $E([c]).$ Next, a partial order is defined on equivalence classes of cells.

\begin{defn}{\rm\cite{FarleySabalkaCohomology}}\label{defn:PartialOrderTree}
Given two equivalence classes $[c]$ and $[d]$ in $\UD{n}{\Gamma},$ write $[d]\le[c]$ if  there are representatives $c\in[c]$ and $d\in[d]$ such that $d$ is obtained from $c$ by removing some (possibly zero) edges of $c$ and replacing each of these edges with one of its endpoints.  
\end{defn}

For the case in which $\Gamma$ is a tree, Farley and Sabalka show the following:

\begin{lemma}{\rm\cite{FarleySabalkaCohomology}}\label{lemma:PropertiesOfEquivClassesandPOTrees}
Let $T$ be a tree.  The relation $``\le"$ is a well-defined partial order on the equivalence classes of cells in $\UD{n}{T},$ with the following properties:
\begin{enumerate}
\item If $c$ and $c'$ are $i$-cells of $\UD{n}{T}$ which contain the same edges, and $[c]$ and $[c']$ have a common upper bound, then $[c]=[c'].$
\item If $\{[c_1],\dots,[c_k]\}$ is a collection of distinct equivalence classes of 1-cells with a common upper bound, then the collection has a least upper bound, and if $e_i$ is the unique edge in $[c_i],$ then $e_i\cap e_j=\emptyset$ for $i\ne j.$
\item If $c$ is a $k$-cell in $\UD{n}{T},$ then there is a unique collection of equivalence classes of 1-cells $\{[c_1],\dots,[c_k]\}$ which has $[c]$ as an upper bound.
\item If $[c']\le [c]$ for some critical cell $c,$ then $[c']$ contains a critical cell.
\item If $c$ is critical, then $[c]$ contains only the cell $c$ and redundant cells.
\end{enumerate}
\end{lemma}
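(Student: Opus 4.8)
The plan is to deduce all five parts from a single foundational counting fact together with the cell classification of Theorem~\ref{thm:ClassificationOfCells}, proving the order-theoretic statements (1)--(3) by combinatorics on the tree and the critical-cell statements (4)--(5) by a local analysis at the essential vertices. First I would record the structural feature of a tree that drives everything: if a cell has $k$ edges, then cutting those edges disconnects $T$ into exactly $k+1$ components (as $T$ is connected and acyclic), and the class $[c]$ is determined by $E([c])$ together with the number of cell-vertices in each component. The foundational lemma is that \emph{collapsing a fixed edge-subset} $S\subseteq E([b])$ yields a class depending only on $[b]$ and $S$: each component of $T$ cut along $E([b])-S$ is a union of components of the finer cut along $E([b])$, joined by the $S$-edges internal to it, and its vertex-count equals the sum of the finer sub-counts plus the number of internal $S$-edges, since each collapsed edge reinstates exactly one endpoint and both endpoints of a collapsed edge lie in the \emph{same} merged component. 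As the finer sub-counts agree across representatives of $[b]$ and the internal $S$-edge count depends only on $S$ and $T$, the class is well defined and independent of which endpoint is chosen. This is precisely Part (1): two $i$-cells sharing their edges and lying below a common $[b]$ arise by collapsing the same set $S=E([b])-E([c])$, hence are equivalent. The same lemma makes $\le$ a partial order: reflexivity is the empty collapse; transitivity is that collapsing $S_1$ and then $S_2$ agrees with collapsing $S_1\cup S_2$; and antisymmetry holds because $[d]\le[c]$ and $[c]\le[d]$ force $E([c])=E([d])$, so the two classes have a common upper bound and Part (1) applies.

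For Parts (2) and (3) I would pass to the \emph{quotient tree} whose nodes are the components of $T$ cut along $\{e_1,\dots,e_k\}$ and whose edges are the $e_i$. Disjointness of the $e_i$ is immediate: they all lie in the edge set of a common upper bound, whose edges are pairwise disjoint by the cell condition, and distinct classes force distinct edges by Part (1). For the least upper bound I would take the unique class with edge set $\{e_1,\dots,e_k\}$ lying below the common upper bound (unique by Part (1)); it dominates each $[c_i]$ by a single-edge collapse. That it is least follows from a leaf-peeling induction on the quotient tree: a leaf node is one side of a single cut $e_i$, so its vertex-count is read directly off the class $[c_i]$, and peeling leaves (with the reinstated-edge correction at each step) recovers the full distribution; hence any upper bound, once collapsed to edge set $\{e_1,\dots,e_k\}$, carries this same forced distribution and therefore dominates the candidate. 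Part (3) is the dual bookkeeping statement: by Part (1) there is exactly one $1$-cell class below $[c]$ for each of the $k$ edges of $c$, and these $k$ classes form the unique collection admitting $[c]$ as an upper bound.

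Parts (4) and (5) are where I would invoke Theorem~\ref{thm:ClassificationOfCells}. For (5) I would show that, within a fixed edge set, the critical cell realizes the \emph{unique} arrangement in which every vertex is blocked, and that any other representative $c'$ is redundant: one exhibits an unblocked vertex $v$ in $c'$ and checks that every order-respecting edge $e$ of $c'$ satisfies $\iota(e)>v$, which is exactly the redundancy criterion. For (4), given $[c']\le[c]$ with $c$ critical, I would construct a critical representative of $[c']$ by collapsing each edge of $c$ outside $E([c'])$ to its $\tau$-endpoint and then redistributing the vertices of each component into the unique blocked arrangement from (5); the content is to verify that this representative is genuinely critical---that every surviving edge is still order-disrespecting and that every vertex, including the reinstated endpoints, is blocked.

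The main obstacle I anticipate is precisely this local analysis behind (4) and (5). Unlike the purely numerical arguments for (1)--(3), these require controlling the blocking and order-disrespecting conditions as vertices are redistributed and edges are collapsed toward $\ast$; in particular the crux is to verify that the maximally blocked arrangement in each component exists and is unique, and that collapsing an edge of a critical cell preserves, for each surviving edge $e$, an order-disrespecting witness (a lower-numbered child of $\tau(e)$). Once this is established at each essential vertex, Parts (4) and (5) follow.
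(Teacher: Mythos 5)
The paper offers no proof of this lemma---it is quoted from Farley--Sabalka---so there is nothing internal to compare your route against; I am assessing the argument on its own terms. For parts (1)--(3) your collapse-invariance lemma is the right engine, but it rests on a false premise about the component structure: in this paper an ``edge'' of a cell means the \emph{closed} $1$-cell, so $T-E$ deletes the endpoints as well, and a $k$-cell whose edges meet essential vertices cuts $T$ into strictly more than $k+1$ components (an endpoint of degree $d$ contributes $d-1$ local pieces). The invariance argument itself survives, since a reinstated endpoint of a collapsed edge $e$ still lands in the unique component of the coarser cut that contains $\overline{e}$; but your ``quotient tree whose nodes are the components and whose edges are the $e_i$'' does not exist as described---each deleted closed edge is adjacent to $\deg\tau(e_i)-1+\deg\iota(e_i)-1$ components, so the incidence structure becomes a tree only after inserting the closed edges themselves as extra nodes, and the leaf-peeling recursion for the least upper bound must be rerun on that bipartite tree. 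This is repairable, and parts (1) and (3) go through, but the argument for (2) is not correct as written.

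The genuine gap is in (4) and (5), and you have in effect flagged it yourself. Everything there rests on the assertion that within a class the all-blocked arrangement is unique, and this is not a routine local check. In a component $C$ of $T-E(c)$ the all-blocked configurations are exactly the $n_C$-element vertex sets closed under $v\mapsto v^0(v)$ and anchored at the unique vertex of $C$ adjacent to an endpoint of a cell edge (or at $\ast$); such a set is unique only because, under the standing sufficient-subdivision hypothesis, a chain of at most $n-1$ occupied vertices starting beside an endpoint of a cell edge can never reach a further essential vertex of $T$ and branch. Without that hypothesis the claim is false: if the path from $\tau(e)$ to the next essential vertex $w$ in direction $1$ had fewer than $n-1$ edges, one could occupy that entire path and place the final vertex at either $v^1(w)$ or $v^2(w)$, producing two distinct all-blocked (indeed critical) cells in the same class. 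Your proposal never invokes subdivision and gives no argument for uniqueness, so the crux is missing. Separately, for (5) redundancy of the other representatives requires more than exhibiting an unblocked vertex: you must also rule out collapsibility, i.e.\ show that whenever an edge $e$ of $c$ becomes order-respecting in some $c'\sim c$, an unblocked vertex of $c'$ lies below $\iota(e)$ (this holds because the direction-$i$ component of $\tau(e)$ that housed the witness in $c$ still carries at least one vertex of $c'$, and once the witness position is vacant the lowest such vertex is unblocked and smaller than $\iota(e)$); your sketch does not address this direction at all, and the same missing local analysis is what would make your candidate representative in (4) actually critical. As it stands, (4) and (5) are an outline with the hard step left open.
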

The proof of Lemma \ref{lemma:PropertiesOfEquivClassesandPOTrees} easily extends to give the following properties in the general case.
\begin{cor}\label{lemma:EquivPpties}
Let $\Gamma$ be any graph.  The relation $``\le"$ is a well-defined partial order on the equivalence classes of cells in $\UD{n}{\Gamma},$ with the following properties:
\begin{enumerate}
\item If $c$ and $c'$ are $i$-cells of $\UD{n}{\Gamma}$ which contain the same edges, and $[c]$ and $[c']$ have a common upper bound, then $[c]=[c'].$
\item If $\{[c_1],\dots,[c_k]\}$ is a collection of distinct equivalence classes of 1-cells with a common upper bound, and if $e_i$ is the unique edge in $[c_i],$ then $e_i\cap e_j=\emptyset$ for $i\ne j.$
\item If $c$ is a $k$-cell in $\UD{n}{\Gamma},$ then there is a unique collection of equivalence classes of 1-cells $\{[c_1],\dots,[c_k]\}$ which has $[c]$ as an upper bound.\label{ppty3}
\item If $[c']\le [c]$ for some critical cell $c,$ then $[c']$ contains a critical cell.
\end{enumerate}
\end{cor}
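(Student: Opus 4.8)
The plan is to re-run the proof of Lemma \ref{lemma:PropertiesOfEquivClassesandPOTrees} essentially verbatim, after isolating the single structural fact about cells that makes the tree argument work and checking that this fact survives for an arbitrary graph. The key point is that the equivalence relation $\sim$ of Definition \ref{defn:EquivClassesTree} and the relation $\le$ of Definition \ref{defn:PartialOrderTree} are defined purely in terms of the edge set $E([c])$ of a cell together with the partition of $\Gamma$ into the connected components of $\Gamma-E([c])$; at no point does the definition use that $\Gamma$ is a tree. So the task reduces to verifying that each combinatorial step of the tree proof refers only to these data.

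The workhorse I would establish first is a ``forced distribution'' statement: if $d$ is obtained from $c$ by removing the edges in $E([c])\setminus E([d])$ and replacing each by an endpoint, then every removed edge still lies in the graph $\Gamma-E([d])$ and therefore joins its two endpoints inside a single connected component of $\Gamma-E([d])$. Consequently the choice of endpoint $\tau(e)$ versus $\iota(e)$ made when contracting a removed edge $e$ never affects the number of vertices of $d$ lying in any component: the count of vertices of $d$ in a component $D$ of $\Gamma-E([d])$ equals the number of vertices of $c$ lying in the $\Gamma-E([c])$-subcomponents of $D$, plus the number of removed edges contained in $D$. This formula is graph-independent, and it shows that the class $[d]$ is determined by $[c]$ together with the chosen edge set $E([d])$. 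From this I obtain well-definedness and the partial order axioms quickly: reflexivity is the empty removal; antisymmetry follows because $[d]\le[c]\le[d]$ forces equal dimensions and hence a zero-edge removal, so $d=c$; and transitivity follows from the additivity of the distribution formula along a nested chain $E([e])\subseteq E([d])\subseteq E([c])$. The same uniqueness statement is exactly property (1), and applying it to each single edge of a $k$-cell yields property (3).

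Property (2) should require almost no work. A common upper bound $[u]$ of $[c_1],\dots,[c_k]$ contains, by the same-edges clause of $\sim$, each of the edges $e_i$ in its (common) edge set $E([u])$; since the edges appearing in any genuine cell of $\UD{n}{\Gamma}$ have pairwise disjoint closures, we get $e_i\cap e_j=\emptyset$ for $i\ne j$. I would note that this is precisely why the corollary drops the ``least upper bound'' clause present in the tree statement: when the $e_i$ together lie on a cycle of $\Gamma$, the candidate upper bound need no longer be unique, so no least upper bound need exist.

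The genuinely Morse-theoretic statement, and the step I expect to be the main obstacle, is property (4). Here I would revisit the classification behind Theorem \ref{thm:ClassificationOfCells}. Given a critical cell $c$ and $[c']\le[c]$, the edges of $c'$ form a subset of the order-disrespecting edges of $c$ and remain order-disrespecting in any representative of $[c']$ (a deleted edge is always order-disrespecting, and a tree edge that is order-disrespecting in $c$ stays so), so it only remains to produce a representative of $[c']$ in which every vertex is blocked. The plan is to slide the vertices of $c'$ within each component of $\Gamma-E([c'])$ toward their blocking positions using the tree directions $e^0(\cdot)$, an operation internal to the component that preserves $\sim$. The difficulty absent in the tree case is that these components may now contain cycles coming from deleted edges not used by $c'$, so I would need to check that the blocking procedure still terminates in a fully blocked configuration. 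Because blocking is defined entirely through the spanning tree $T$ and the directions $e^0(\cdot)$, and $T$ remains a spanning tree of $\Gamma$, the component-by-component argument of the tree case should carry over. I would also remark that property (5) of Lemma \ref{lemma:PropertiesOfEquivClassesandPOTrees} genuinely fails once cycles are present, which is exactly why it is omitted from the corollary.
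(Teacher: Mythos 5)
Your proposal is correct and follows essentially the same route as the paper, which simply asserts that the Farley--Sabalka tree proof extends to general graphs; your ``forced distribution'' observation (that a contracted edge lies wholly inside one component of $\Gamma-E([d])$ because closures of cells in a configuration are disjoint) is precisely the graph-independent fact that makes the extension work, and your remarks on why property 2 is weakened and property 5 dropped match the paper's. The one imprecision is the claim that a tree edge of $c$ stays order-disrespecting in \emph{any} representative of $[c']$ --- the witness vertex $v$ with $v^0(v)=\tau(e)$ can in principle be relocated within its component --- but this is harmless for property 4: in the representative obtained directly from $c$ the witness is blocked by the edge $e$ itself, so it is never moved by the reduction process that blocks the remaining vertices, and the terminal cell is therefore critical as needed.
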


Note the weakening of property 2 and the omission of property 5.  Information regarding the product structure of $H^*(\UD{n}{\Gamma})$ is obtained via a map $\UD{n}{\Gamma}\to\widehat{\UD{n}{\Gamma}},$ where $\widehat{\UD{n}{\Gamma}}$ is a subcomplex of a high-dimensional torus.  Specifically, $\widehat{\UD{n}{\Gamma}}$ is a subcomplex of $\prod S^1_{[c]},$ where the product is taken over all equivalence classes of 1-cells in $\UD{n}{\Gamma}$ (the ordering of the product can be chosen arbitrarily), and for each equivalence class of 1-cells $[c],$ $S^1_{[c]}$ is a circle consisting of a single 0-cell and a single 1-cell.  A $k$-dimensional cell in this product corresponds to a collection $\mathcal{K}$ of $k$ distinct equivalence classes of 1-cells in $\UD{n}{\Gamma},$ and we label this $k$-cell by $\mathcal{K}.$  The complex $\widehat{\UD{n}{\Gamma}}$ is obtained from this product by removing open $k$-cells which are labeled by collections $\mathcal{K}$ which do not have upper bounds.  For each collection $\mathcal{K}$ of equivalence classes of 1-cells, let $\mathcal{K}^*$ denote the cellular cochain in $H^*(\widehat{\UD{n}{\Gamma}})$ dual to the cell labeled by $\mathcal{K}.$  If $\mathcal{K}=\{[c_1],\dots,[c_k]\},$ and $\mathcal{K}$ has an upper bound, then 
\[
\{[c_1]\}^\ast\smallsmile\cdots\smallsmile\{[c_k]\}^\ast=\mathcal{K}^\ast.
\]
If $\mathcal{K}$ does not have an upper bound, then 
\[
\{[c_1]\}^\ast\smallsmile\cdots\smallsmile\{[c_k]\}^\ast=0.
\]

Farley and Sabalka show that in the tree case, there is a map \[q\from \UD{n}{T}\to\widehat{\UD{n}{T}}\] which sends each $k$-dimensional cell $c$ homeomorphically to the cell labeled by $\mathcal{K}$ in $\widehat{\UD{n}{T}},$ where $\mathcal{K}$ is the unique collection of $k$ equivalence classes of 1-cells which has $[c]$ as its upper bound \cite{FarleySabalkaCohomology}.  The proof extends directly to the general graph case, where we replace $T$ with $\Gamma.$  The only exception is that in the tree case, the equivalence class of $k$-cells $[c]$ is the \textit{least} upper bound for the collection $\mathcal{K},$ so every cell $c$ in $\UD{n}{\Gamma}$ which is sent to $\mathcal{K}$ satisfies $c\in[c].$  In this case, the cell of $\widehat{\UD{n}{\Gamma}}$ labeled by $\mathcal{K}$ can be labeled by $[c].$  However, in general, it is possible that two (or more) distinct equivalence classes of $k$-cells $[c]$ and $[c']$ are both upper bounds for $\mathcal{K},$ so it is possible that $q(c)=q(c')=\mathcal{K}$ but $[c]\ne[c'].$  

For each cell $c$ of $\UD{n}{\Gamma},$ define a cellular cochain $\phi_{[c]}\in C^*(\UD{n}{\Gamma})$ by
\[
\phi_{[c]}(c')=\begin{cases}1,&\text{if }c'\sim c\\
0,&\text{otherwise} .
\end{cases}
\]
Each such cochain is a cocycle.  The following is a generalization of Proposition 4.5 in \cite{FarleySabalkaCohomology} and Proposition 3.3 in \cite{FarleyCohomology}.  

\begin{lemma}\label{lemma:Products}
Consider a collection $\mathcal{K}=\{[c_1],\dots,[c_k]\}$ of distinct equivalence classes of 1-cells.  We have 
\[
\phi_{[c_1]}\smallsmile\cdots\smallsmile\phi_{[c_k]}=\sum\phi_{[c'']},
\]
where the sum is taken over all equivalence classes of $k$-cells $[c'']$ which are upper bounds for $\mathcal{K}.$  In particular, if $\mathcal{K}$ has a least upper bound $[c],$ then 
\[
\phi_{[c_1]}\smallsmile\cdots\smallsmile\phi_{[c_k]}=\phi_{[c]},
\]
 and if $\mathcal{K}$ does not have an upper bound, then 
\[
\phi_{[c_1]}\smallsmile\cdots\smallsmile\phi_{[c_k]}=0.
\]
\end{lemma}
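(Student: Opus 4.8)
The plan is to exploit the cellular map $q\from\UD{n}{\Gamma}\to\widehat{\UD{n}{\Gamma}}$ together with the product structure on $H^*(\widehat{\UD{n}{\Gamma}})$ recorded above, reducing everything to naturality of the cup product. The key identity to establish first is that each $\phi_{[c_i]}$ is a pullback. Evaluating on an arbitrary $1$-cell $c'$ of $\UD{n}{\Gamma}$, the map $q$ sends $c'$ homeomorphically to the $1$-cell of $\widehat{\UD{n}{\Gamma}}$ labeled by $\{[c']\}$, which by property 3 of Corollary \ref{lemma:EquivPpties} (with $k=1$) is the unique collection of one $1$-cell equivalence class having $[c']$ as an upper bound. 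Hence $q^*(\{[c_i]\}^*)(c')=\{[c_i]\}^*(q(c'))=1$ exactly when $[c']=[c_i]$, i.e.\ when $c'\sim c_i$; this is precisely the defining condition for $\phi_{[c_i]}(c')=1$, so $q^*(\{[c_i]\}^*)=\phi_{[c_i]}$ at the cochain level.

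Next I would apply naturality of the cup product to obtain
\[
\phi_{[c_1]}\smallsmile\cdots\smallsmile\phi_{[c_k]}=q^*\bigl(\{[c_1]\}^*\smallsmile\cdots\smallsmile\{[c_k]\}^*\bigr).
\]
If $\mathcal{K}$ has no upper bound, the product inside $q^*$ is $0$ by the stated product structure of $\widehat{\UD{n}{\Gamma}}$, which gives the final assertion at once. If $\mathcal{K}$ does have an upper bound, that same product equals $\mathcal{K}^*$, so it remains to compute $q^*(\mathcal{K}^*)$. Evaluating on a $k$-cell $c''$, the map $q$ sends $c''$ homeomorphically onto the cell labeled by the unique collection $\mathcal{K}(c'')$ of $k$ equivalence classes of $1$-cells having $[c'']$ as an upper bound, so $q^*(\mathcal{K}^*)(c'')=\mathcal{K}^*(q(c''))=1$ iff $\mathcal{K}(c'')=\mathcal{K}$. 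By the uniqueness in property 3, this holds iff $[c'']$ is a (necessarily $k$-dimensional) upper bound for $\mathcal{K}$. Therefore $q^*(\mathcal{K}^*)=\sum\phi_{[c'']}$, the sum over all equivalence classes of $k$-cells that are upper bounds for $\mathcal{K}$, which is the general formula.

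Finally I would deduce the least-upper-bound case. Among equivalence classes of a fixed dimension the relation $``\le"$ is trivial: if $[d]\le[c]$ with $\dim[d]=\dim[c]$, then the witnessing representative $d$ is obtained from $c$ by removing zero edges, forcing $[d]=[c]$. Any upper bound of $\mathcal{K}$ must contain the $k$ pairwise-disjoint edges furnished by property 2 of Corollary \ref{lemma:EquivPpties}, hence has dimension at least $k$; a least upper bound $[c]$, lying below every upper bound (in particular below some $k$-cell upper bound), therefore has dimension exactly $k$. By triviality of $``\le"$ in fixed dimension, $[c]$ then coincides with every $k$-cell upper bound, so the sum collapses to the single term $\phi_{[c]}$.

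The main obstacle is precisely the discrepancy with the tree case: for a general graph $\mathcal{K}$ may admit several pairwise-inequivalent $k$-cell upper bounds, so $q$ need not be injective on equivalence classes of $k$-cells, and the product genuinely becomes a sum rather than a single $\phi_{[c]}$. The crux is thus keeping careful track of which cells $q$ identifies and verifying that this identification is governed exactly by the upper-bound condition through the uniqueness in Corollary \ref{lemma:EquivPpties}; once $\phi_{[c_i]}=q^*(\{[c_i]\}^*)$ and naturality are in place, the remainder is formal.
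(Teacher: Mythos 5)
Your proposal is correct and takes essentially the same route as the paper: both identify $\phi_{[c_i]}=q^*(\{[c_i]\}^*)$, invoke naturality of the cup product through $q$, and compute $q^*(\mathcal{K}^*)$ by evaluating on $k$-cells, using the uniqueness in Corollary \ref{lemma:EquivPpties}(3) to translate $q(c'')=\mathcal{K}$ into the upper-bound condition. Your closing paragraph merely makes explicit the collapse of the sum in the least-upper-bound case, which the paper leaves implicit.
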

\begin{proof}
If $c'$ is any $k$-cell, then 
\begin{align*}
q^*(\mathcal{K}^*)(c')=\mathcal{K}^*(q(c'))&=\begin{cases}
1,&\text{if } q(c')=\mathcal{K}\\
0,&\text{otherwise}
\end{cases}\\
&=\begin{cases}
1,&\text{if $[c']$ is an upper bound for $\mathcal{K}$}\\
0,&\text{otherwise}
\end{cases}\\
&=\sum \phi_{[c'']}(c'),
\end{align*}
where the sum is taken over all equivalence classes of $k$-cells $c''$ such that $[c'']$ is an upper bound for $\mathcal{K}.$  If $[c]$ is the least upper bound for $\mathcal{K},$ then $q^*(\mathcal{K}^*)=\phi_{[c]}.$  In particular, since each $c_i$ is one-dimensional, the class $[c_i]$ is the least upper bound for the collection $\{[c_i]\},$ so $q^*(\{[c_i]\}^*)=\phi_{[c_i]}.$  Therefore, 
\[
\phi_{[c_1]}\smallsmile\cdots\smallsmile\phi_{[c_k]}=q^*(\{[c_1]\}^*\smallsmile\cdots\smallsmile\{[c_k]\}^*)=q^*(\mathcal{K}^*)=\sum \phi_{[c'']}.
\]
\end{proof}

\begin{lemma}\label{lemma:DistinctHomologyClasses}
Suppose $c$ and $c'$ are critical cells which represent linearly independent homology classes in $H_*^\mathcal{M}(\UD{n}{\Gamma})$ and have the property that the equivalence class $[c]$ (resp. $[c']$) only contains $c$ (resp. $c'$) and redundant cells.  Then $\phi_{[c]}$ and $\phi_{[c']}$ represent linearly independent cohomology classes in $H^*(\UD{n}{\Gamma}).$
\end{lemma}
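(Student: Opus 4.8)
The plan is to prove linear independence by exhibiting homology classes against which $\phi_{[c]}$ and $\phi_{[c']}$ pair dually, so that the Kronecker pairing matrix between them is the identity. The crucial preliminary observation is that, although the critical cells $c$ and $c'$ are not themselves cycles in the cellular chain complex $C_*(\UD{n}{\Gamma})$, their images $f^\infty(c)$ and $f^\infty(c')$ are. Indeed, the homology isomorphism $H^\mathcal{M}_*(\UD{n}{\Gamma})\cong H_*(\UD{n}{\Gamma})$ of Theorem \ref{thm:MorseComplex} is induced by $f^\infty,$ so it carries the Morse cycles $c$ and $c'$ to genuine cycles $f^\infty(c)$ and $f^\infty(c')$ representing the corresponding classes in $H_*(\UD{n}{\Gamma}),$ which are therefore linearly independent.

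First I would compute $\langle\phi_{[c]},f^\infty(c'')\rangle$ for an arbitrary critical cell $c''.$ By Theorem \ref{thm:MorseComplex} we have $f^\infty(c'')=c''+(\text{collapsible cells}),$ so by the definition of $\phi_{[c]},$
\[
\langle\phi_{[c]},f^\infty(c'')\rangle=\phi_{[c]}(c'')+\sum_{d\text{ collapsible}}\phi_{[c]}(d),
\]
the sum ranging over the collapsible cells appearing in $f^\infty(c'').$ This is exactly where the hypothesis enters: since $[c]$ contains only $c$ and redundant cells, no collapsible cell lies in $[c],$ so the entire sum vanishes; moreover $\phi_{[c]}(c'')=1$ precisely when $c''\sim c,$ which for the critical cell $c''$ occurs only when $c''=c,$ as $c$ is the unique critical cell of $[c].$ Hence $\langle\phi_{[c]},f^\infty(c'')\rangle$ equals $1$ if $c''=c$ and $0$ otherwise, and the analogous statement holds for $\phi_{[c']}.$

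With this pairing computation in hand the conclusion is immediate. Each $\phi_{[c]}$ is a cocycle and each $f^\infty(c)$ is a cycle, so the evaluation $\langle-,-\rangle$ descends to the Kronecker pairing between $H^*(\UD{n}{\Gamma})$ and $H_*(\UD{n}{\Gamma}).$ Suppose $a\phi_{[c]}+b\phi_{[c']}$ with $a,b\in\Zm{2}$ represents $0$ in cohomology, i.e.\ is a coboundary; then its pairing against any cycle vanishes. Pairing against $f^\infty(c)$ and against $f^\infty(c')$ and applying the computation above yields $a=0$ and $b=0,$ respectively. Thus no nontrivial combination is a coboundary, and $\phi_{[c]}$ and $\phi_{[c']}$ are linearly independent in $H^*(\UD{n}{\Gamma}).$

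The main obstacle, and really the only delicate point, is the pairing computation: one must be sure both that the collapsible cells occurring in $f^\infty(c'')$ contribute nothing and that no critical cell other than $c$ lies in $[c].$ This is precisely the force of the assumption that $[c]$ (resp.\ $[c']$) consist only of $c$ (resp.\ $c'$) together with redundant cells. For trees this would come for free from property (5) of Lemma \ref{lemma:PropertiesOfEquivClassesandPOTrees}, but that property was deliberately dropped in the general setting of Corollary \ref{lemma:EquivPpties}, which is exactly why it must appear as a hypothesis here. A secondary point worth flagging is that one must pair $\phi_{[c]}$ against the cycles $f^\infty(c)$ and $f^\infty(c')$ rather than against $c$ and $c'$ themselves, since the latter are not cycles in $C_*(\UD{n}{\Gamma}).$
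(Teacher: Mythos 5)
Your proposal is correct and follows essentially the same route as the paper: the heart of both arguments is the computation $\phi_{[c]}(f^\infty(\tilde{c}))=\phi_{[c]}(\tilde{c})$, which uses exactly the hypothesis that $[c]$ contains no collapsible cells and no critical cell other than $c$ itself. The only cosmetic difference is that you phrase the conclusion via the Kronecker pairing against the cycles $f^\infty(c)$ and $f^\infty(c')$ directly, whereas the paper extends $\{c,c'\}$ to a representative basis and identifies $\phi_{[c]}$ with the dual basis element $c^*$ via the universal coefficient theorem; the content is the same.
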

\begin{proof}
Let $\mathcal{B}$ be a representative basis for $H_*^\mathcal{M}(\UD{n}{\Gamma})$.  Without loss of generality, we may assume that $c\in\mathcal{B}$ and $c'\in\mathcal{B},$ but neither $c$ nor $c'$ appears in any other linear combination in $\mathcal{B}.$  By Theorem \ref{thm:MorseComplex},  $\mathcal{B'}=\{f^\infty(b):b\in\mathcal{B}\}$ forms a basis for $H_*(\UD{n}{\Gamma}).$ 

By the universal coefficient theorem, we may identify $H^*(\UD{n}{\Gamma})$ with $\hom(H_*(\UD{n}{\Gamma}),\Zm{2}).$  For each $b\in\mathcal{B},$ let $b^*$ denote the dual of $f^\infty(b),$ so $\{b^*:b\in\mathcal{B}\}$ represents a basis for $H^*(\UD{n}{\Gamma}).$  Thus, the cocycles $c^*$ and $c'^*$ represent linearly independent classes in $H^*(\UD{n}{\Gamma}).$  The claim will then follow by showing $\phi_{[c]}=c^*$ and $\phi_{[c']}=c'^*$ in cohomology.  To show this, we consider evaluating $\phi_{[c]}$ and $\phi_{[c']}$ on the elements of the basis $\mathcal{B'}$ for $H_*(\UD{n}{\Gamma}).$  For any critical cell $\tilde{c},$ we have $f^\infty(\tilde{c})=\tilde{c}+\text{collapsible cells}$ (see Theorem \ref{thm:MorseComplex}).  But, by assumption, $[c]$ contains only the critical cell $c$ and redundant cells, so 
\[
\phi_{[c]}(f^\infty(\tilde{c}))=\phi_{[c]}(\tilde{c})+\phi_{[c]}(\text{collapsible cells})=\phi_{[c]}(\tilde{c})=
\begin{cases}
1,&\text{if }\tilde{c}=c\\
0,&\text{if }\tilde{c}\ne c.
\end{cases}
\]
Therefore, by the assumption about the basis, for each $f^\infty(b)$ in $\mathcal{B'}$ we have 
\[
\phi_{[c]}(f^\infty(b))=\begin{cases}
1,&\text{if }b=c\\
0,&\text{if }b\ne c,
\end{cases}
\]
so $\phi_{[c]}$ and $c^*$ represent the same cohomology class, as claimed.  An analogous argument shows $\phi_{[c']}$ and $c'^*$ are also cohomologous.
\end{proof}

\section{Main Results}\label{sec:MainResults}

Our approach to studying $\TC(\UD{n}{\Gamma})$ is as follows. Theorem \ref{thm:TCUDnUB} gives the upper bounds 
\[
\TC(\UD{n}{\Gamma})\le 2K+1,\quad\text{ where }\quad K=\min\left\{n,\left\lfloor\frac{n+\beta}{2}\right\rfloor,m(\Gamma)\right\}.
\]
To establish lower bounds, we will describe two critical $K$-cells $c$ and $c'$ which satisfy the conditions given in Lemma \ref{lemma:DistinctHomologyClasses} and have the property that $[c]$ and $[c']$ are the least upper bounds for the unique collections of $K$ equivalence classes of 1-cells which have $[c]$ and $[c']$ as their upper bounds.  Suppose $\{[c_1],\dots,[c_K]\}$ and $\{[c'_1],\dots,[c'_K]\}$ are these collections.  For each $c_i,$ let $\overline{\phi}_{[c_i]}$ denote the zero-divisor 
\[
\phi_{[c_i]}\otimes1+1\otimes\phi_{[c_i]}\in H^*(\UD{n}{\Gamma})\otimes H^*(\UD{n}{\Gamma}),
\]
and define $\overline{\phi}_{[c'_i]}$ analogously.  This gives rise to a product of $2K$ zero-divisors:
\[
\biggl(\prod_{i=1}^K\overline{\phi}_{[c_i]}\biggr)\cdot\biggl(\prod_{j=1}^K\overline{\phi}_{[c'_j]}\biggr)
=(\phi_{[c_1]}\cdots\phi_{[c_K]}\otimes \phi_{[c'_1]}\cdots\phi_{[c'_K]})+(\phi_{[c'_1]}\cdots\phi_{[c'_K]}\otimes \phi_{[c_1]}\cdots\phi_{[c_K]})
+\text{ Other Terms.}
\]

By Lemma \ref{lemma:Products}, we have $\phi_{[c_1]}\cdots\phi_{[c_K]}=\phi_{[c]}$ and $\phi_{[c'_1]}\cdots\phi_{[c'_K]}=\phi_{[c']},$ so we can write 

\[
\biggl(\prod_{i=1}^K\overline{\phi}_{[c_i]}\biggr)\cdot\biggl(\prod_{j=1}^K\overline{\phi}_{[c'_j]}\biggr)
=\phi_{[c]}\otimes\phi_{[c']}+\phi_{[c']}\otimes\phi_{[c]}+\text{ Other Terms}.
\]

By Lemma \ref{lemma:DistinctHomologyClasses}, $\phi_{[c]}\otimes\phi_{[c']}+\phi_{[c']}\otimes\phi_{[c]}$ is non-zero.  If neither $\phi_{[c]}$ nor $\phi_{[c']}$ is cohomologous to any term in Other Terms, it will follow that the entire product is non-zero, establishing $\TC(\UD{n}{\Gamma})\ge2K+1.$

Before proving Theorem \ref{thm:S_Graph}, we first define the class of $S$-graphs.  Fix a graph $\Gamma$ and a vertex $v$ in $\Gamma.$  A cycle in $\Gamma$ is called a \emph{simple cycle} if it contains exactly one essential vertex.  A component $C$ of $\Gamma-\{v\}$ is said to be a \emph{simple component} if each cycle in $C\cup\{v\}$ is a simple cycle.

\begin{defn}\label{defn:S_Graph}
A graph $\Gamma$ is said to be an $S$-graph if it has the following properties:
    \begin{enumerate}        
    \item $\Gamma$ contains a spanning tree $T$ such that each deleted edge $\w{e}$ has an endpoint at an essential vertex and each essential vertex $v\in\Gamma$ satisfies $\deg_T(v)\ge 4.$
    \item For each essential vertex $v,$ at least one component $C$ of $\Gamma-\{v\}$ is a simple component, and the edge of $T$ contained in $C\cup\{v\}$ which meets $v$ does not fall in direction 0 from $v.$
    \end{enumerate}
\end{defn}
For example, the graph in Figure \ref{fig:S_Graph} is an $S$-graph.

\begin{figure}[htb]
    \centering
    \includegraphics[height=4cm]{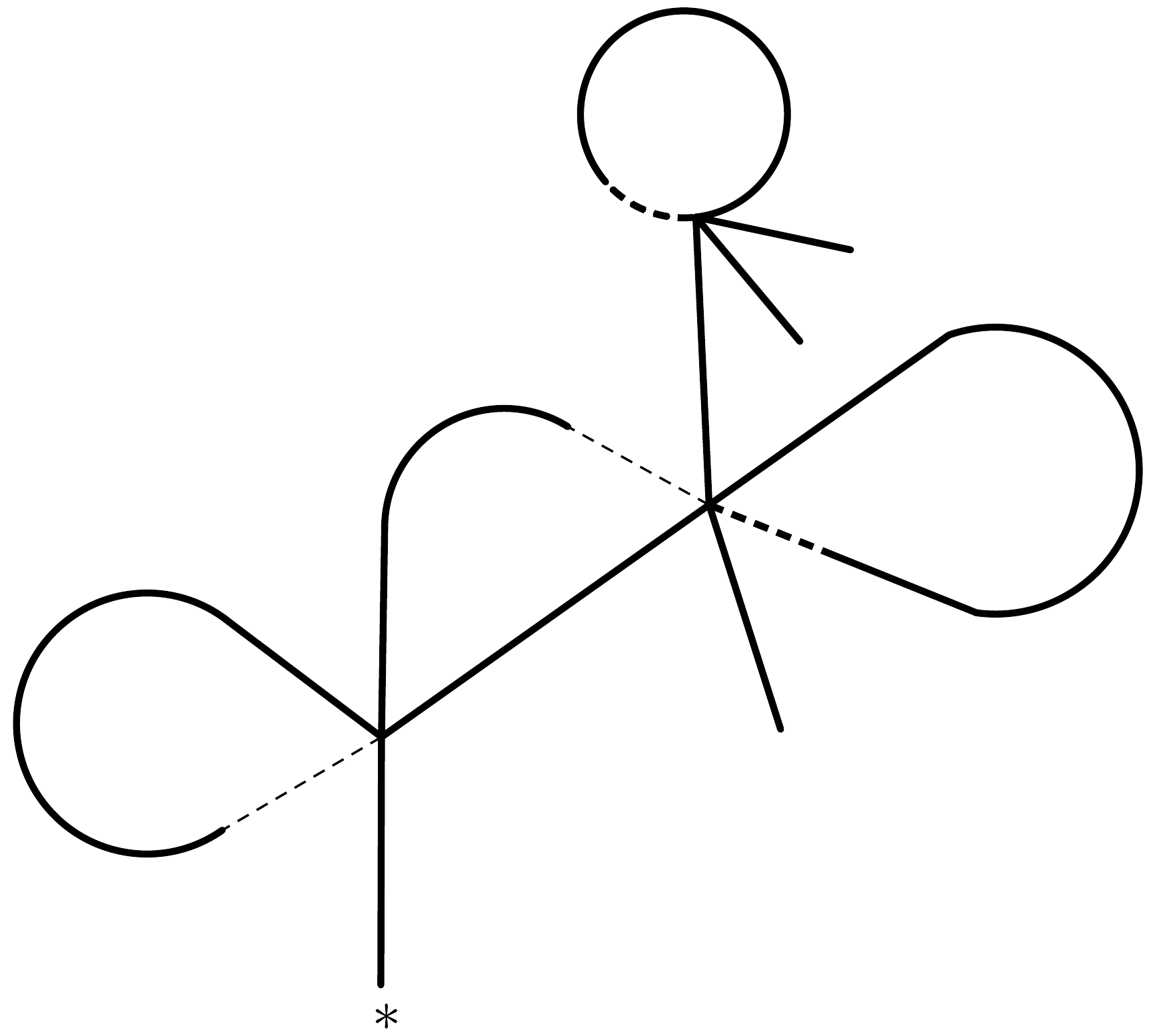}
    \caption{An $S$-graph $\Gamma$ and its spanning tree $T$}
    \label{fig:S_Graph}
\end{figure}

 Note the edge in property 2 is unique, for if there were two edges $e,e'$ of $T$ contained in $C\cup\{v\}$ which meet $v,$ then $C\cup\{v\}$ would necessarily contain a cycle on which both $e$ and $e'$ fall.  Such a cycle must contain a deleted edge $\w{e}$ which does not meet $v,$ and by assumption, $\w{e}$ has an endpoint at an essential vertex.  This contradicts the assumption that $C$ is a simple component.  So the edge must be unique, and we may refer to the direction of this edge as the ``direction of $C$."  If a simple component $C$ of $\Gamma-\{v\}$ does not fall in direction 0, then it is clear that an embedding can be chosen so that it falls in any desired non-zero direction from $v.$

Also, property 2 implies that every $S$-graph is fully articulated (see Theorem \ref{thm:TCFullyArticulated}).  However, the converse is not true.  For example, if $\Gamma$ is a wedge of 2 or 3 circles, then $\Gamma$ is fully articulated, but it is impossible to choose a spanning tree as in property 1, so $\Gamma$ is not an $S$-graph.  There are also graphs which admit spanning trees as in property 1 but do not satisfy property 2.  However, the class of $S$-graphs is fairly large.  More specifically, every graph is a subgraph of some $S$-graph.  To see this, note that every graph has a spanning tree with the property that each deleted edge has an endpoint at an essential vertex (see \cite{FarleySabalkaGraphBraidGroups} for a proof).  For any graph $\Gamma$ with such a spanning tree, if for each essential vertex $v,$ we add three edges at $v,$ the resulting graph is an $S$-graph.  Finally, note that with this choice of spanning tree, any order disrespecting edge must have an endpoint at an essential vertex, so it follows that $\dim(c)\le m(\Gamma)$ for any critical cell $c.$

The approach to proving Theorem \ref{thm:S_Graph} is the approach described at the beginning of this section.  In Definition \ref{defn:Phi} we construct the cells which satisfy the conditions of Lemma \ref{lemma:DistinctHomologyClasses} and verify these conditions in Lemmas \ref{lemma:PhiPpty1} and \ref{lemma:PhiPpty2}.  We will first need a preliminary result.

Consider any graph $\Gamma$ with spanning tree $T$ and a cell $c$ of $\UD{n}{\Gamma}$ which does not contain any deleted edges. The cell $c$ can be viewed as a cell in $\UD{n}{T}$ and we have 
\[
[c']_T\le[c]_T\Rightarrow[c']_\Gamma\le[c]_\Gamma.
\]
We have the following converse for certain types of graphs and cells.

\begin{lemma}\label{lemma:simplecycles}
Let $\Gamma$ be a graph in which each cycle is simple, let $T\subset \Gamma$ be a spanning tree such that each deleted edge has an endpoint at an essential vertex, and let $c$ and $c'$ be cells in $\UD{n}{\Gamma}$ such that each edge in each cell is an edge of $T$ with an endpoint at an essential vertex.  Then
\[
[c']_\Gamma\le[c]_\Gamma\Rightarrow[c']_T\le[c]_T.
\]
\end{lemma}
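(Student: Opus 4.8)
The plan is to reduce the statement to showing that the two equivalence relations $\sim_\Gamma$ and $\sim_T$ actually coincide on all the cells under consideration, after which the implication becomes a formal consequence of the definition of the partial order. Recall that for a cell with edge set $E$, the relation $\sim$ compares the numbers of (free) vertices across the connected components of $\Gamma-E$, where the edges of $E$ are removed together with their endpoints. Since every vertex of $\Gamma$ lies on the spanning tree $T$, the claim will follow once we know that, for any edge set $E$ consisting of edges of $T$ each meeting an essential vertex, the components of $\Gamma-E$ and of $T-E$ induce the \emph{same} partition of the free vertices. Indeed, although $\Gamma$ has more edges than $T$, the point is that the deleted edges can never reconnect pieces that the edges of $E$ have separated.

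The key step, which I expect to carry the whole argument, is the structural claim that \emph{no deleted edge joins two distinct components of $T-E$}. To prove it I would fix a deleted edge $\w{e}$, let $v$ be its essential endpoint and $b$ its other endpoint, and consider the unique cycle that $\w{e}$ spans together with the tree geodesic from $v$ to $b$. By hypothesis this cycle is simple, so $v$ is its only essential vertex; hence every interior vertex of the geodesic, as well as $b$, has degree at most $2$ in $\Gamma$. Consequently no such vertex can be incident to an edge of $E$ other than the initial edge at $v$: any edge of $E$ meets a second vertex at its essential endpoint, and attaching it would force the degree of a non-essential path vertex up to at least $3$. Thus every edge of $E$ meeting the geodesic is incident to $v$, and $b$ together with the interior of the geodesic survives in $\Gamma-E$. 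If $v$ survives, then $v$ and $b$ lie in the same component of $T-E$ and $\w{e}$ is internal to it; if $v$ is removed (because some edge of $E$ meets $v$), then $\w{e}$ merely dangles from the surviving endpoint $b$ and attaches nothing new. In either case two distinct components of $T-E$ are never merged.

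Granting this, each component of $\Gamma-E$ consists of a single component of $T-E$ with possibly some dangling arcs attached, so the two partitions of the free vertices agree; hence for any two cells whose edges are edges of $T$ meeting essential vertices, $c_1\sim_\Gamma c_2$ if and only if $c_1\sim_T c_2$. To conclude, I would start from $[c']_\Gamma\le[c]_\Gamma$, choose witnessing representatives $\gamma\sim_\Gamma c$ and $\gamma'\sim_\Gamma c'$ with $\gamma'$ obtained from $\gamma$ by deleting edges and replacing each by an endpoint, and note that all edges involved are edges of $T$ meeting essential vertices. The coincidence of the two relations upgrades these to $\gamma\sim_T c$ and $\gamma'\sim_T c'$, and since the deletion-and-replacement operation takes place entirely within $T$, the same pair $(\gamma,\gamma')$ witnesses $[c']_T\le[c]_T$. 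The entire difficulty is therefore concentrated in the degree-and-simplicity bookkeeping of the structural claim, which depends essentially on two features of the hypotheses: that edges are removed together with their endpoints (so the essential endpoint of a deleted edge is itself removed whenever it meets an edge of the cell), and that every cycle is simple (so each deleted edge is confined to a single essential vertex).
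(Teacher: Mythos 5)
Your proof is correct and follows essentially the same route as the paper's: both arguments hinge on the observation that a deleted edge together with the tree geodesic between its endpoints forms a (necessarily simple) cycle, so the edges of the cell --- each required to have an essential endpoint --- can only meet that cycle at its unique essential vertex, and hence deleted edges never rejoin components of $T-E(c)$ that the cell's edges separate; the paper phrases this as the connectedness of each $C_T$, while you phrase it as the structural claim that no deleted edge joins two components of $T-E$, and both then pass from the equality case to the strict inequality case via witnessing representatives. The only nitpick is that when the geodesic edge at $v$ itself lies in $E$, the geodesic vertex adjacent to $v$ (and, in the degenerate length-one case, $b$ itself) does not survive in $\Gamma-E$, contrary to your phrasing that ``$b$ together with the interior of the geodesic survives,'' but in that situation the deleted edge dangles or becomes an isolated open arc, so the conclusion that it merges nothing is unaffected.
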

\begin{proof}
We first introduce some notation.  For a cell $c$ in $\UD{n}{\Gamma}$ which does not include any deleted edges and a connected component $C\subset \Gamma-E(c),$ let $V_\Gamma(c,C)$ denote the number of vertices of $c$ in $C.$  Define $V_{T}(c,C)$ analogously for subsets $C\subset T-E(c).$

First suppose $[c']_\Gamma=[c]_\Gamma,$ so $E(c)=E(c'),$ and $V_\Gamma(c,C)=V_{\Gamma}(c',C)$ for each component $C$ of $\Gamma-E(c).$  Fix such a component $C,$ and let $C_T\subset C$ denote the subspace of $T$ obtained from $C$ by removing the interiors of all deleted edges.  In general, $C_T$ is a union of components $C_1,\dots,C_l$ of $T-E(c)$ and $V_\Gamma(c,C)=V_T(c,C_1)+\cdots+V_T(c,C_l).$  We claim that under the hypotheses, $C_T$ is itself a connected component of $T-E(c).$

For the sake of contradiction, suppose $C$ is connected but $C_T$ is disconnected.  Let $C'_T$ and $C''_T$ be two connected components of $C_T.$  In $\Gamma-E(c),$ the subspaces $C'_T$ and $C''_T$ both fall in the connected component $C,$ so $C$ must contain at least one deleted edge $\w{e}$ with endpoints $x$ and $y$ in $C'_T$ and $C''_T,$ respectively.  By assumption, one of these endpoints (say $x$) must be essential.  Furthermore, since $T$ is connected, there is a path in $T$ connecting $x$ and $y$, which, together with $\w{e},$ forms a cycle $\cycle.$  Since $C'_T$ and $C''_T$ are disconnected in $T-E(c),$ it must be the case that $\cycle$ intersects the essential endpoint $v$ of some edge $e\in E(c).$  Since $x\in C'_T$ and $v\notin C'_T,$ we have $v\ne x.$  But, $v$ and $x$ are both essential, contradicting the assumption that $\cycle$ is a simple cycle. So, $C_T$ is connected, and it follows that $V_\Gamma(c,C)=V_T(c,C_T),$ and similarly, $V_\Gamma(c',C)=V_T(c',C_T).$  The choice of $C$ was arbitrary, so for all components $C$ of $\Gamma-E(c),$ the subspace $C_T$ is is connected, and we have $V_T(c,C_T)=V_\Gamma(c,C)=V_\Gamma(c',C)=V_T(c',C_T).$  Therefore $[c']_T=[c]_T.$

Now, if $[c']_\Gamma<[c]_\Gamma,$ there are cells $\hat{c}'\in [c']_\Gamma$ and $\hat{c}\in [c]_\Gamma$ such that $\hat{c}'$ is obtained from $\hat{c}$ by removing certain edges and replacing each with one of its endpoints.  But, by the argument above, we have $\hat{c}'\in [c']_T$ and $\hat{c}\in [c]_T,$ so $[c']_T<[c]_T.$
\end{proof}

Fix an $S$-graph $\Gamma$ with a spanning tree $T$ as in Definition \ref{defn:S_Graph}.  We now define the cells that satisfy the conditions of Lemma \ref{lemma:DistinctHomologyClasses}.
\begin{defn}\label{defn:Phi}
Let $\Gamma$ be an $S$-graph, and without loss of generality, assume that for each essential vertex $v,$ the edge in direction 1 from $v$ falls in a simple component of $\Gamma-\{v\}$.  Let $n$ be an integer satisfying $n\ge 2m(\Gamma),$ and let $v_1,\dots,v_m$ be the essential vertices of $\Gamma.$  For each $S=(s_1,\dots,s_m)\in \{2,3\}^m,$ define the critical $m$-cell $\Phi_S(\Gamma)$ in $\UD{n}{\Gamma}$ by 
\[
\Phi_S(\Gamma)=\{e^{s_1}(v_1),\dots,e^{s_m}(v_m),v^1(v_1),\dots,v^1(v_m),w_1,\dots,w_{n-2m}\}
\]
where $w_1,\dots,w_{n-2m}$ are blocked vertices in the component of $\Gamma-{v_1}$ in direction 1 from $v_1$ (if $n>2m).$
\end{defn}

Figure \ref{fig:S_GraphCells} shows the cells $\Phi_S$ and $\Phi_{S'}$ for $S=(2,\dots,2)$ and $S'=(3,\dots,3),$ where $n=8$ and the graph $\Gamma$ is as in Figure \ref{fig:S_Graph}.  Here, we depict a cell by indicating which vertices and edges are to be included in that cell.  Since we assume the graph $\Gamma$ is sufficiently subdivided, there are many vertices of degree 2 in $\Gamma;$ we make no indication of these vertices in the figure.
\begin{figure}[ht]
    \centering
    \includegraphics[height=4cm]{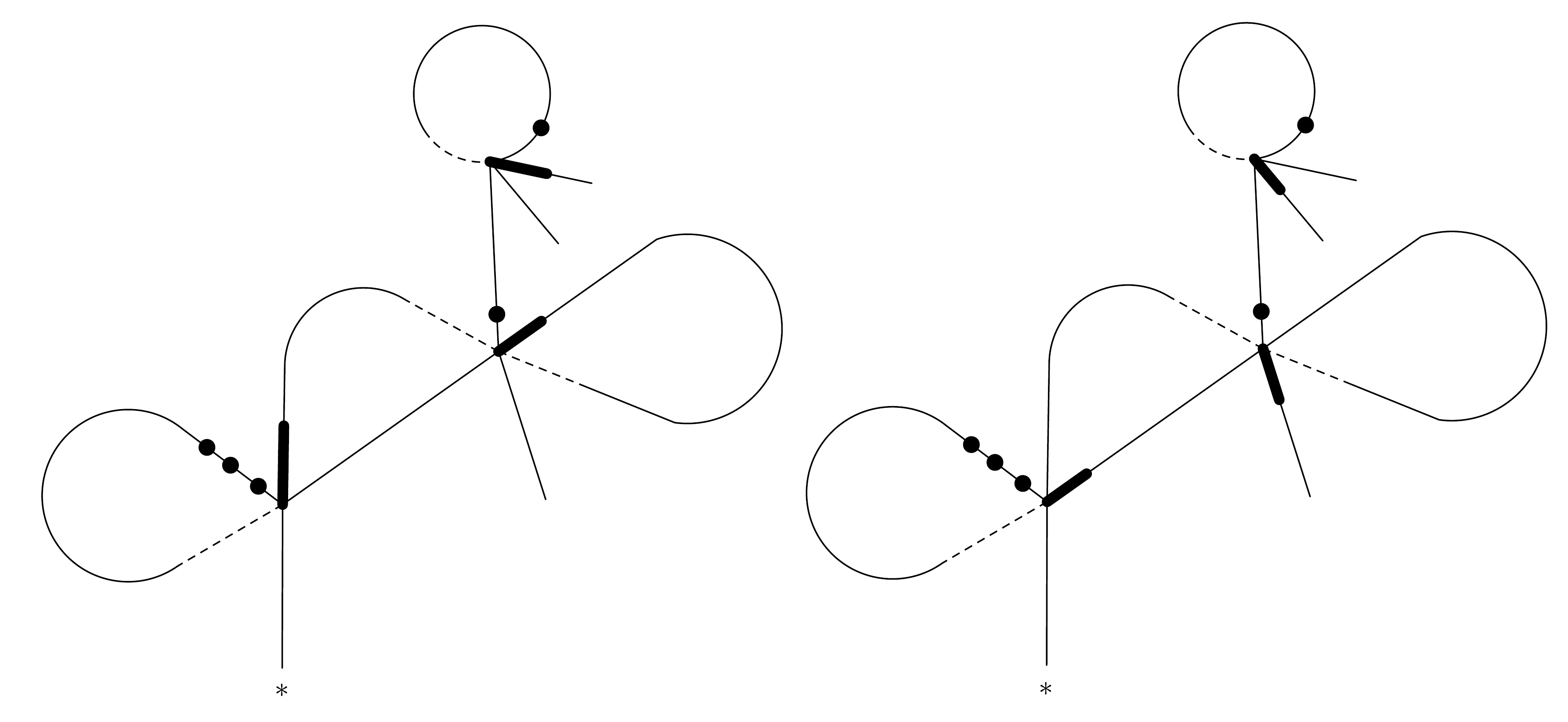}
    \caption{The cells $\Phi_{2,\dots,2}$ and $\Phi_{3,\dots,3}$ in $\UD{8}{\Gamma},$ where $\Gamma$ is as in Figure \ref{fig:S_Graph}.}
    \label{fig:S_GraphCells}
\end{figure}
\begin{lemma}\label{lemma:PhiPpty1}
For any distinct subsets $S,S'\in\{2,3\}^m,$ the cells $\Phi_S(\Gamma)$ and $\Phi_{S'}(\Gamma)$ represent distinct homology classes in $H_*^\mathcal{M}(\UD{n}{\Gamma}).$
\end{lemma}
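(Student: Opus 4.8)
The plan is to exploit the fact that, for the spanning tree of an $S$-graph, the Morse complex $M_*(\UD{n}{\Gamma})$ vanishes above dimension $m:=m(\Gamma)$, so that in top degree there are no boundaries at all. Once that is established, distinctness of the classes $[\Phi_S(\Gamma)]$ reduces to the trivial observation that the $\Phi_S(\Gamma)$ are distinct cycles, and in fact I will get the stronger conclusion that they are \emph{linearly independent} in $H_m^{\mathcal M}(\UD{n}{\Gamma})$.

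First I would record two elementary facts about each cell $\Phi_S(\Gamma)$. By Definition \ref{defn:Phi} its edges are the tree edges $e^{s_i}(v_i)$ with $s_i\in\{2,3\}$, so $\Phi_S(\Gamma)$ contains no deleted edges; Corollary \ref{cor:CellsInTrees} then gives $\w{\partial}(\Phi_S(\Gamma))=0$, so each $\Phi_S(\Gamma)$ is a cycle in the Morse complex. Moreover, for distinct $S,S'\in\{2,3\}^m$ the cells differ in at least one edge (at an index $i$ with $s_i\ne s_i'$, one contains $e^2(v_i)$ and the other $e^3(v_i)$), so they are distinct critical $m$-cells; hence the whole family $\{\Phi_S(\Gamma)\}$ consists of distinct generators of the free $\Zm{2}$-module $M_m(\UD{n}{\Gamma})$ and is therefore linearly independent in $M_m$.

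Next I would argue that $M_{m+1}(\UD{n}{\Gamma})=0$. Since $n\ge 2m(\Gamma)$, both $n\ge m$ and $\lfloor(n+\beta)/2\rfloor\ge\lfloor n/2\rfloor\ge m$, so the integer $K=\min\{n,\lfloor(n+\beta)/2\rfloor,m(\Gamma)\}$ of Theorem \ref{thm:UDnUpperBound} equals $m$. For the spanning tree of Definition \ref{defn:S_Graph} each deleted edge has an endpoint at an essential vertex, so every order-disrespecting edge has an essential endpoint; because distinct edges of a cell have disjoint closures (the cell condition $\overline c_i\cap\overline c_j=\emptyset$) and there are only $m$ essential vertices, a cell all of whose edges are order-disrespecting can contain at most $m$ edges. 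By the classification in Theorem \ref{thm:ClassificationOfCells}, every critical cell has all edges order-disrespecting, so no critical cell has dimension exceeding $m$. Hence $M_{m+1}(\UD{n}{\Gamma})=0$ and $\w{\partial}\from M_{m+1}\to M_m$ is the zero map.

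Finally, combining these steps, $H_m^{\mathcal M}(\UD{n}{\Gamma})=\ker(\w{\partial}\from M_m\to M_{m-1})$, since there are no $m$-dimensional boundaries to factor out. Each $\Phi_S(\Gamma)$ lies in this kernel, and the family is linearly independent in $M_m$, hence remains linearly independent---in particular pairwise distinct---inside the subspace $H_m^{\mathcal M}(\UD{n}{\Gamma})$. I expect the only genuine content to be the vanishing $M_{m+1}=0$: once the top-dimensional bound is in place the higher boundaries disappear and the remaining steps are immediate. The point to handle with care is the claim that distinct order-disrespecting edges consume distinct essential vertices, which is exactly where the cell-disjointness condition together with property 1 of the $S$-graph is used.
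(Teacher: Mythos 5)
Your proof is correct and follows essentially the same route as the paper: each $\Phi_S(\Gamma)$ contains only tree edges and hence is a Morse cycle by Corollary \ref{cor:CellsInTrees}, and the choice of spanning tree forces every critical cell to have dimension at most $m(\Gamma)$, so $M_{m+1}(\UD{n}{\Gamma})=0$ and distinct top-dimensional cycles give distinct (indeed independent) homology classes. The only difference is that you spell out the counting argument for $M_{m+1}=0$, which the paper delegates to the remarks following Definition \ref{defn:S_Graph}.
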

\begin{proof}
Each cell of the form $\Phi_S(\Gamma)$ is a cell in $\UD{n}{T},$ so $\widetilde{\partial}\left(\Phi_S(\Gamma)\right)=0$ by Corollary \ref{cor:CellsInTrees}, so each $\Phi_S(\Gamma)$ is a cycle in the Morse complex.  As mentioned in the remarks following Definition \ref{defn:S_Graph}, with the choice of spanning tree, there are no critical cells of dimension greater than $m(\Gamma).$  In particular, $M_{m(\Gamma)+1}(\UD{n}{\Gamma})=0,$ showing each $m(\Gamma)$-dimensional cycle represents a distinct homology class.
\end{proof}
\begin{lemma}\label{lemma:PhiPpty2}
The equivalence class $[\Phi_S(\Gamma)]$ contains only the cell $\Phi_S(\Gamma)$ and redundant cells.  Furthermore, $[\Phi_S(\Gamma)]$ is the least upper bound for the unique collection of $m$ equivalence classes of 1-cells which has $[\Phi_S(\Gamma)]$ as its upper bound.
\end{lemma}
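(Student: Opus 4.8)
The plan is to deduce both assertions from their analogues for the spanning tree $T$, transporting them to $\Gamma$. The key structural fact is that $\Phi_S(\Gamma)$ contains no deleted edges, and the same is true of every cell sharing its edge set; such a cell may thus be viewed interchangeably as a cell of $\UD{n}{T}$ and of $\UD{n}{\Gamma}$, with the same closures and hence the same validity in both. Whether a vertex is blocked and whether a $T$-edge is order-disrespecting depend only on the vertex ordering, the direction-$0$ edges, and the cell itself, all of which are intrinsic to $T$; so by Theorem \ref{thm:ClassificationOfCells} the classification of such a cell as redundant, collapsible, or critical is identical in $\UD{n}{T}$ and $\UD{n}{\Gamma}$. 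The only data that change when passing from $T$ to $\Gamma$ are the relation $\sim$ and the order $\le$, since these are read off from the connected components of the complement of the edge set and $\Gamma$ carries the extra deleted edges. Because $\sim_T$ refines $\sim_\Gamma$, the whole difficulty lies in the reverse comparison, which is precisely what Lemma \ref{lemma:simplecycles} is designed to supply.

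For the first assertion, Lemma \ref{lemma:PropertiesOfEquivClassesandPOTrees}(5) shows $[\Phi_S(\Gamma)]_T$ consists only of $\Phi_S(\Gamma)$ and cells redundant in $\UD{n}{T}$, which are then redundant in $\UD{n}{\Gamma}$ by the remarks above. It therefore suffices to prove $[\Phi_S(\Gamma)]_\Gamma \subseteq [\Phi_S(\Gamma)]_T$: any $c'$ with $c' \sim_\Gamma \Phi_S(\Gamma)$ has edge set $\{e^{s_1}(v_1), \dots, e^{s_m}(v_m)\}$, each edge a $T$-edge with an endpoint at an essential vertex, so the equality case of Lemma \ref{lemma:simplecycles} yields $c' \sim_T \Phi_S(\Gamma)$, as required.

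For the second assertion, let $\mathcal{K} = \{[c_1], \dots, [c_m]\}$ be the unique collection of $1$-cell classes admitting $[\Phi_S(\Gamma)]$ as an upper bound (Corollary \ref{lemma:EquivPpties}(3)), where $[c_i]$ is represented by the $1$-cell whose single edge is $e^{s_i}(v_i)$. In $\UD{n}{T}$ the collection $\{[c_1]_T, \dots, [c_m]_T\}$ has a least upper bound, namely $[\Phi_S(\Gamma)]_T$, by Lemma \ref{lemma:PropertiesOfEquivClassesandPOTrees}(2). Given any upper bound $[d]$ of $\mathcal{K}$ in $\UD{n}{\Gamma}$, collapse every edge of $d$ except the $e^{s_i}(v_i)$ to obtain a cell $d'$ with $[d'] \le [d]$, whose edge set equals that of $\Phi_S(\Gamma)$ and which still satisfies $[c_i] \le [d']$ for each $i$. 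As each $c_i$ and $d'$ has all of its edges at essential vertices, Lemma \ref{lemma:simplecycles} promotes $[c_i]_\Gamma \le [d']_\Gamma$ to $[c_i]_T \le [d']_T$; hence $[d']_T$ is an upper bound of $\{[c_i]_T\}$ and dominates the least upper bound $[\Phi_S(\Gamma)]_T$. Since $[d']_T$ and $[\Phi_S(\Gamma)]_T$ are both $m$-dimensional with the same edge set, $[\Phi_S(\Gamma)]_T \le [d']_T$ forces $[d']_T = [\Phi_S(\Gamma)]_T$, whence $[d']_\Gamma = [\Phi_S(\Gamma)]_\Gamma$ and finally $[\Phi_S(\Gamma)]_\Gamma = [d']_\Gamma \le [d]_\Gamma$. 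Thus $[\Phi_S(\Gamma)]$ is the least upper bound of $\mathcal{K}$.

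The main obstacle is that Lemma \ref{lemma:simplecycles} requires every cycle of the ambient graph to be simple, a condition that fails for a general $S$-graph (for instance whenever a deleted edge joins two essential vertices). What saves the argument is property (2) of Definition \ref{defn:S_Graph}: by the normalization in Definition \ref{defn:Phi} every vertex of $\Phi_S(\Gamma)$ — the $v^1(v_i)$ and the auxiliary vertices $w_j$ — lies in a simple component, taken in direction $1$. The real work is to check that the argument of Lemma \ref{lemma:simplecycles} only ever appeals to simplicity of cycles confined to these simple components: one must show that a deleted edge can merge two $T$-components supporting the cell's vertices only when the cycle it forms with the connecting $T$-path stays inside a single simple component, so that this cycle is genuinely simple and the contradiction in that proof survives. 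Establishing this localization — equivalently, that deleted edges incident to the simple components meet them only at the essential vertex $v_i$ — is the delicate point, and it is exactly here that the full $S$-graph hypothesis, rather than mere full articulation, is needed.
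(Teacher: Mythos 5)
Your plan hinges on applying Lemma \ref{lemma:simplecycles} to the ambient graph $\Gamma$, whose hypothesis (every cycle of the graph is simple) fails for a general $S$-graph; you flag this yourself and then defer the repair (``the real work is to check\dots,'' ``this is the delicate point'') without carrying it out. That deferred step is not a routine verification --- it is essentially the entire content of the argument --- and the ``localization'' you propose is not how the paper closes the gap. For the first assertion no simple-cycle argument is needed at all: writing $\Phi=\Phi_S(\Gamma)$, the cell $\Phi$ contains an edge at \emph{every} essential vertex, and every deleted edge has an endpoint at an essential vertex, so every deleted edge loses an endpoint in $\Gamma-E(\Phi)$ and cannot join two components of $T-E(\Phi)$; hence $c\sim_\Gamma\Phi$ if and only if $c\sim_T\Phi$, and Lemma \ref{lemma:PropertiesOfEquivClassesandPOTrees}(5) finishes as you say. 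Your detour through Lemma \ref{lemma:simplecycles} is here both unnecessary and unavailable.

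For the second assertion the gap is more serious, because the global promotion $[c_i]_\Gamma\le[d']_\Gamma\Rightarrow[c_i]_T\le[d']_T$ you invoke can actually fail: the $1$-cell $c_i$ contains only the single edge $e^{s_i}(v_i)$, so a deleted edge joining two essential vertices away from $v_i$ survives in $\Gamma-e^{s_i}(v_i)$ and may merge components of $T-e^{s_i}(v_i)$. The paper instead restricts the \emph{cells} to each simple component $\Gamma_i$ of $\Gamma-\{v_i\}$ in direction $1$, where every cycle genuinely is simple, so that Lemma \ref{lemma:simplecycles} applied to $\Gamma_i$ with spanning tree $T\cap\Gamma_i$ legitimately yields $[\Phi]_{\Gamma_i}=[\Phi']_{\Gamma_i}$ for any competing $m$-dimensional upper bound $\Phi'$. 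It then concludes with a counting step absent from your sketch: since every vertex of $\Phi$ lies in some direction-$1$ component $C_i$, the identity $n-m=\sum_i V(\Phi,C_i)=\sum_i V(\Phi',C_i)$ (where $V(\cdot,C)$ counts vertices of a cell in $C$) forces $\Phi'$ to have no vertices in any other component, which is exactly what rules out upper bounds with vertices scattered elsewhere. Without the restriction-to-$\Gamma_i$ mechanism and this final count, your argument does not go through.
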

\begin{proof}
To prove the first statement, let $\Phi=\Phi_S(\Gamma),$ and suppose $c\sim \Phi.$  That is, $E(c)=E(\Phi)$ and the number of vertices of $c$ in each component $C$ of $\Gamma-E(c)$ equals the number of vertices of $\Phi$ in $C.$  In other words, we can obtain $c$ from $\Phi$ by replacing each vertex $v$ of $\Phi$ with other another vertex $v'$ (which may equal $v$) in the component of $\Gamma-E(\Phi)$ which contains $v.$  In particular, $c$ can be viewed as a cell in $\UD{n}{T}.$  The cell $\Phi$ contains an edge at each essential vertex, and since each deleted edge has an endpoint at an essential vertex, each such $v'$ must fall in the same component of $T-E(\Phi)$ as does $v.$  This shows that $c\sim_\Gamma \Phi$ if and only if $c\sim_{T}\Phi.$  Therefore, by Lemma \ref{lemma:PropertiesOfEquivClassesandPOTrees}, either $c=\Phi$ or $c$ is redundant, when viewed as a cell in $\UD{n}{T},$ but the classification of a cell in $\UD{n}{T}$ as being critical, collapsible, or redundant remains unchanged when viewing it as a cell in $\UD{n}{\Gamma}$ .  This proves the first statement.

For the second statement, suppose $\Phi'$ is an $m$-dimensional cell such that $[\Phi']$ is an upper bound for the unique collection $\mathcal{K}=\{[c_1],\dots,[c_m]\}$ of $m$ equivalence classes of 1-cells which has $[\Phi]$ as an upper bound.  To show $[\Phi]$ is the least upper bound for $\mathcal{K},$ we must show $[\Phi]=[\Phi'].$  Without loss of generality, assume the unique edge $e_i$ in $c_i$ satisfies $\tau(e_i)=v_i.$  The fact that $[\Phi']$ is an upper bound for $\mathcal{K}$ implies $E(\Phi')=E(\Phi)=\{e_1,\dots,e_m\}.$  For a component $C$ of $\Gamma-E(\Phi),$ let $V(\Phi,C)$ denote the number of vertices of $\Phi$ in the component $C,$ and define $V(\Phi',C)$ analogously.  Since the total number of vertices and edges in any cell of $\UD{n}{\Gamma}$ must equal $n,$ and there are a total of $m$ edges in each cell, we have 
\[
\sum_{C}V(c',C)=\sum_{C}V(\Phi',C)=n-m,
\]
where both sums are taken over the collection of all connected components $C\subset \Gamma-E(\Phi).$  We will show $[\Phi]=[\Phi']$ by showing $V(\Phi,C)=V(\Phi',C)$ for each such $C.$

For each $i,$ let $\hat{c}_i$ denote the cell obtained from $\Phi$ by replacing each edge $e_j,\ j\ne i,$ with the endpoint $\iota(e_j),$ and let $\hat{c}'_i$ denote the cell obtained from $\Phi'$ by the analogous procedure.  Since $[\Phi]$ and $[\Phi']$ are upper bounds for $\mathcal{K},$ we have $\hat{c}_i,\hat{c}'_i\in[c_i].$

Fix an essential vertex $v_i$ and let $\Gamma_i$ be the simple component of $\Gamma-\{v_i\}$ in direction 1 from $v_i.$  For a cell $d$ in $\UD{n}{\Gamma},$ let $d_{\Gamma_i}$ denote the cell consisting of the vertices and edges of $d$ which are contained in $\Gamma_i.$  Each such $d_{\Gamma_i}$ is a cell in $\UD{n'}{\Gamma_i}$ for some $n'\le n.$  Since $[\Phi]$ and $[\Phi']$ are both upper bounds for $[c_i],$ it must be the case that $\Phi_{\Gamma_i}$ and $\Phi'_{\Gamma_i}$ are cells in $\UD{n'}{\Gamma_i}$ for the same $n'.$  Furthermore, for any essential vertex $v_j$ in $\Gamma_i,$ any component of $\Gamma_i-e_j,$ except the component in direction 0 from $v_j,$ is a component of $\Gamma-e_j.$

To simplify notation, we will write $[d_{\Gamma_i}]_{\Gamma_i}$ as $[d]_{\Gamma_i}.$  It is clear that $[\Phi]_{\Gamma_i}$ and $[\Phi']_{\Gamma_i}$ are both upper bounds for the collections
\[
\{[\hat{c}_{j_1}]_{\Gamma_i},\dots,[\hat{c}_{j_{m'}}]_{\Gamma_i}\}\quad\text{and}\quad\{[\hat{c}'_{j_1}]_{\Gamma_i},\dots,[\hat{c}'_{j_{m'}}]_{\Gamma_i}\},
\]
respectively, where $v_{j_1},\dots,v_{j_{m'}}$ are the essential vertices in $\Gamma_i.$

Fix a $j\in\{j_1,\dots,j_{m'}\}.$  We claim that $[\hat{c}_j]_{\Gamma_i}=[\hat{c}'_j]_{\Gamma_i}.$ For the sake of contradiction, assume this is not the case.  Then, there must be a component $C$ of $\Gamma_i-e_j$ such that the number of vertices of $(\hat{c}_j)_{\Gamma_i}$ in $C$ differs from the number of vertices of $(\hat{c}'_j)_{\Gamma_i}$ in $C.$  Furthermore, since the total number of vertices of $(\hat{c}_j)_{\Gamma_i}$ and $(\hat{c}'_j)_{\Gamma_i}$ must equal $n'-1,$ there must be at least two such components, so we may assume $C$ does not fall in direction 0 from $v_j.$  Then, $C$ is a component of $\Gamma-e_i,$ so the number of vertices of $\hat{c}_j$ in $C$ must equal the number of vertices of $\hat{c}'_j$ in $C,$ since $[\hat{c}_j]_\Gamma=[\hat{c}'_j]_\Gamma.$  But, the number of vertices of $\hat{c}_j$ in $C$ equals the number of vertices of $(\hat{c}_j)_{\Gamma_i}$ in $C$ and the number of vertices of $\hat{c}'_j$ in $C$ equals the number of vertices of $(\hat{c}'_j)_{\Gamma_i}$ in $C,$ arriving at a contradiction.  Therefore
\[
\{[\hat{c}_{j_1}]_{\Gamma_i},\dots,[\hat{c}_{j_{m'}}]_{\Gamma_i}\}=\{[\hat{c}'_{j_1}]_{\Gamma_i},\dots,[\hat{c}'_{j_{m'}}]_{\Gamma_i}\},
\]
and $[\Phi]_{\Gamma_i}$ and $[\Phi']_{\Gamma_i}$ are both upper bounds for this collection.

Let $T_i=T\cap\Gamma_i.$  Since $\Gamma_i$ is a simple component, $T_i$ is a spanning tree of $\Gamma_i.$  Lemma \ref{lemma:simplecycles} shows that 
$[\Phi]_{T_i}$ and $[\Phi']_{T_i}$ are both upper bounds for the collection $
\{[\hat{c}_{j_1}]_{T_i},\dots,[\hat{c}_{j_{m'}}]_{T_i}\},$ and therefore by Lemma \ref{lemma:PropertiesOfEquivClassesandPOTrees}, $[\Phi]_{T_i}=[\Phi']_{T_i},$ so $[\Phi]_{\Gamma_i}=[\Phi']_{\Gamma_i}.$  In particular, if $C_i$ is the component of $\Gamma_i-E(c_{\Gamma_i})$ in direction 1 from $v_i,$ then 
\[
V(\Phi,C_i)=V(\Phi_{\Gamma_i},C_i)=V(\Phi'_{\Gamma_i},C_i)=V(\Phi',C_i).
\]
Now, each vertex $v$ of $\Phi$ falls in exactly one such $C_i,$ so
\[
n-m=\sum_{i=1}^mV(\Phi,C_i)=\sum_{i=1}^mV(\Phi',C_i),
\]
so $V(\Phi',C)=0$ for any component $C$ which is not of the form $C=C_i$ for some $i.$  Thus, $V(\Phi,C)=V(\Phi',C)$ for every component of $\Gamma-E(\Phi),$ so $[\Phi]=[\Phi'],$ completing the proof.
\end{proof}

\begin{proof}[Proof of Theorem \ref{thm:S_Graph}]
The upper bound $\TC(\UD{n}{\Gamma})\le2m(\Gamma)+1$ is given in Theorem \ref{thm:TCUDnUB}.  To establish the lower bound, we will use the cohomological lower bounds in Theorem \ref{thm:TCTools}.  Let $S_2=(2,2,\dots,2)\in\{2,3\}^m,$ and let $S_3=(3,3,\dots,3)\in\{2,3\}^m;$ let $\Phi_2=\Phi_{S_2}(\Gamma)$ and let $\Phi_3=\Phi_{S_3}(\Gamma).$  Let $\{[c_1],\dots,[c_m]\}$ and $\{[d_1],\dots,[d_m]\}$ be the unique collections of equivalence classes of 1-cells which have $[\Phi_2]$ and $[\Phi_3]$ as their upper  bounds.  Lemma \ref{lemma:PhiPpty2} shows $[\Phi_2]$ and $[\Phi_3]$ are the least upper bounds of these collections.  Without loss of generality, assume the unique edges in $[c_i]$ and $[d_i]$ have an endpoint at $v_i.$  Consider the product of zero divisors 
\begin{equation}
\biggl(\prod_{i=1}^m\overline{\phi}_{[c_i]}\biggr)\cdot\biggl(\prod_{j=1}^m\overline{\phi}_{[d_j]}\biggr)
=\phi_{[\Phi_2]}\otimes\phi_{[\Phi_3]}+\phi_{[\Phi_3]}\otimes\phi_{[\Phi_2]}+\text{Other Terms.}\label{eqn:ZD}
\end{equation}
Any nonzero term in ``Other Terms" is of the form $\alpha\otimes\beta,$ where $\alpha$ and $\beta$ are of the form 
\begin{equation}
\phi_{[c_{i_1}]}\cdots\phi_{[c_{i_s}]}\phi_{[c_{j_1}]}\cdots\phi_{[d_{j_{m-s}}]},\quad0<s<m.
\label{eqn:OT}
\end{equation}
Let $I=\{i_1,\dots,i_s\}$ and let $J=\{j_1,\dots,j_{m-s}\}.$  If $I$ and $J$ have a common element $k,$ then both $[c_k]$ and $[d_k]$ contain an edge $e$ satisfying $\tau(e)=v_k,$ so by Lemma \ref{lemma:EquivPpties} the collection $\{[c_{i_1}],\dots,[c_{i_s}],[d_{j_1}],\dots,[d_{j_{m-s}}]\}$ does not have an upper bound, and then by Lemma \ref{lemma:Products}, the product in (\ref{eqn:OT}) is zero.  Otherwise, $I\cup J=\{1,\dots,m\},$ and if $S=(\epsilon_1,\dots,\epsilon_m)\in\{2,3\}^m$ where $\epsilon_i=2$ if $i\in I$ and $\epsilon_i=3$ if $i\in J,$ then $[\Phi_S(\Gamma)]$ is an upper bound for the collection $\{[c_{i_1}],\dots,[c_{i_s}],[d_{j_1}],\dots,[d_{j_{m-s}}]\}.$  Again, Lemma \ref{lemma:PhiPpty2} shows $[\Phi_S(\Gamma)]$ is the least upper bound for this collection, so in this case, the product in (\ref{eqn:OT}) equals $\phi_{[\Phi_S(\Gamma)]}.$

So, the nonzero terms in ``Other Terms" are of the form $\phi_{[\Phi_{S'}(\Gamma)]}\otimes\phi_{[\Phi_{S''}(\Gamma)]},$ where $S'$ and $S''$ are sequences in $\{2,3\}^m$ which are neither $(2,2,\dots,2)$ nor $(3,3,\dots,3).$  Since all cells of the form $\Phi_S(\Gamma)$ represent distinct homology classes, the product in (\ref{eqn:ZD}) is nonzero, and the lower bounds in Theorem \ref{thm:TCTools} show $\TC(\UD{n}{\Gamma})\ge2m+1.$ 
\end{proof}

We now move to proving Theorem \ref{thm:VertexDisjointUnordered}.  The proof will involve the Morse boundary of cells containing deleted edges.  The use of a spanning tree $T\subset\Gamma$ in which each deleted edge has endpoints of degree 2 in $\Gamma$ will help simplify this computation.  We refer to such spanning trees as \emph{essential spanning trees}. It is clear that any graph can be subdivided so that it has an essential spanning tree.  The main benefit of using essential spanning trees is the following observation.  
\begin{lemma}\label{lemma:DeletedEdgeBlocks}
If $T$ is an essential spanning tree of $\Gamma$ and $\w{e}$ is a deleted edge with $\tau(\w{e})\ne\ast,$ then in any cell $c$ containing $\w{e},$ no vertex in $c$ can be blocked by $\w{e}.$ If $\tau(\w{e})=\ast,$ then the only vertex which can be blocked by $\w{e}$ is the vertex with label 1.
\end{lemma}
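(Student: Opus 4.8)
The plan is to unwind what it means for a vertex $v$ of $c$ to be \emph{blocked by} $\w{e}$: by definition this happens precisely when $v$ appears in $c$, $\w{e}$ appears in $c$, and the direction-$0$ tree edge $e^0(v)$ meets $\w{e}$. Since $e^0(v)$ and $\w{e}$ are edges of the graph, they meet if and only if they share an endpoint. The endpoints of $e^0(v)$ are $v$ and $v^0(v)=\tau(e^0(v))$, while the endpoints of $\w{e}$ are $\tau(\w{e})$ and $\iota(\w{e})$; by the essential-spanning-tree hypothesis both of the latter have degree $2$ in $\Gamma$. So I would reduce the statement to analyzing which of these endpoints can coincide.

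First I would dispose of the possibility that $v$ itself is an endpoint of $\w{e}$: this is impossible because $v$ and $\w{e}$ both appear in $c$, and the disjointness condition $\overline{c}_i\cap\overline{c}_j=\emptyset$ forbids a vertex of a cell from lying on an edge of the same cell. Thus any blocking must come from $v^0(v)$ being an endpoint of $\w{e}$. The heart of the argument is then a degree count at the vertex $w=v^0(v)$. Since $w=\tau(e^0(v))$, the vertex $v$ lies below $w$ in $T$, so $e^0(v)$ is a \emph{non}-direction-$0$ (child) tree edge at $w$. If $w\ne\ast$, then $w$ also carries its own direction-$0$ edge toward $\ast$, a second tree edge distinct from $e^0(v)$; together with the deleted edge $\w{e}$ this forces $\deg_\Gamma(w)\ge 3$, contradicting that $w$ has degree $2$. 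Hence $w=v^0(v)=\ast$.

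This dichotomy finishes both cases. If $\tau(\w{e})\ne\ast$ then, because $\tau(\w{e})<\iota(\w{e})$ and $\ast$ carries the smallest label, neither endpoint of $\w{e}$ equals $\ast$, so the forced conclusion $v^0(v)=\ast$ is impossible and no vertex can be blocked by $\w{e}$. If $\tau(\w{e})=\ast$, then $v^0(v)=\ast$ means $v$ is a child of $\ast$ in $T$; since $\ast$ was chosen to have degree $1$ in $T$, it has a unique child, namely the vertex labeled $1$, so the only vertex that can be blocked by $\w{e}$ is the vertex with label $1$. I expect the only delicate point to be the degree count of the second paragraph --- making sure $e^0(v)$ really is a child edge at $v^0(v)$ and therefore distinct from the parent (direction-$0$) edge that every non-root vertex must possess --- as this is exactly what converts the degree-$2$ hypothesis into the conclusion $v^0(v)=\ast$.
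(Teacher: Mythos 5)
Your proof is correct and takes essentially the same route as the paper's: both identify the common endpoint $v^0(v)$ of $e^0(v)$ and $\w{e}$ and derive a contradiction from a degree count at that vertex (three distinct edges meeting an endpoint of a deleted edge, which must have degree $2$), with the exceptional case $v^0(v)=\ast$ yielding the vertex labeled $1$. You are in fact slightly more explicit than the paper in ruling out that $v$ itself is an endpoint of $\w{e}$ via the closure-disjointness condition on cells.
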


\begin{proof}
Suppose $c$ is a cell containing some deleted edge $\w{e}$ and a vertex $v$ blocked by $\w{e}.$ Then, $v$ is blocked by $v'$ in $c_{v'},$ where $v'$ either $\iota(\w{e})$ or $\tau(\w{e}).$ 

If either $\tau(\w{e})\ne\ast,$ or $\tau(\w{e})=\ast$ and $v'=\iota(\w{e}),$ then the edges $\w{e},\ e^0(v),$ and $e^0(v')$ are three edges which have an endpoint at $v',$ so $v'$ has degree at least three, contradicting the assumption that $T$ is an essential spanning tree. 

If $\tau(\w{e})=\ast$ and $v'=\tau(\w{e}),$ and $v$ is not the vertex labeled 1, then the edges $\w{e},\ e^0(v),$ and $e^0(1)$ are three edges which have an endpoint at $\ast,$ again contradicting the assumption that $T$ is essential.
\end{proof}

To prove that the boundary operators in the tree case are all zero (Theorem \ref{thm:TreeBoundaries}), Farley introduces the following ``reduction."
\begin{defn}{\rm\cite{FarleyHomology}}
If a cell $c$ of $\UD{n}{\Gamma}$ is redundant, let $v$ be the minimal unblocked vertex of $c$, and let $r(c)$ be the cell obtained from $c$ by replacing $v$ with $v'=v^0(v).$
\end{defn}
In other words, for a redundant $k$-cell $c,$ the $k$-cell $r(c)$ is obtained by moving the minimal unblocked vertex of $c$ one step closer to $\ast.$

\begin{lemma}{\rm\cite{FarleyHomology}}\label{lemma:Rinfty}
If $T$ is a tree, and $c$ is a redundant cell in $\UD{n}{T},$ we have $\pi F^\infty(c)=\pi F^\infty r(c).$
\end{lemma}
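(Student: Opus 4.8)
The plan is to reduce the statement to a single, concrete cancellation. First I would compute $F(c)$ explicitly. Writing $e=e^0(v)$ and noting $W(c)$ is $c$ with the vertex $v$ replaced by the edge $e$, the cellular boundary $\partial W(c)$ contains, from collapsing $e$, exactly the two terms $c$ (replace $e$ by $\iota(e)=v$) and $r(c)$ (replace $e$ by $\tau(e)=v^0(v)$); every remaining term of $\partial W(c)$ is obtained by collapsing one of the original edges of $c$ and therefore still contains the edge $e$. Since $F=1+\partial W$ and we work over $\Zm 2$, this gives
\[
F(c)=r(c)+Y,
\]
where $Y$ is a sum of cells each of which contains the edge $e^0(v)$. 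Next I would use the defining property that $F^\infty=F^\infty F$ (both sides equal the stable value $F^N$ for large $N$). Applying $\pi F^\infty$ to the displayed identity yields $\pi F^\infty(c)=\pi F^\infty(r(c))+\pi F^\infty(Y)$, so the lemma is equivalent to the single assertion $\pi F^\infty(Y)=0$.

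The key simplification is that $\pi F^\infty$ annihilates every collapsible cell: if $d$ is collapsible then $W(d)=0$, so $F(d)=d$, hence $F^\infty(d)=d$, and $\pi(d)=0$ because $d$ is not critical. Thus it would suffice to show that each cell appearing in $Y$ is collapsible. Using Theorem \ref{thm:ClassificationOfCells} together with the minimality of $v$, I would argue that the edge $e^0(v)$ itself is the collapsibility witness for ``most'' terms: since $v$ is unblocked in $c$, the vertex $v^0(v)$ and any edge meeting it are absent from $c$, which forces every vertex of such a term that is smaller than $v$ to be blocked (in particular any sibling of $v$ created by a collapse is blocked by $e^0(v)$ itself, which meets the common parent $v^0(v)$). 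The one obstruction to $e^0(v)$ being order-respecting is the presence of a vertex $w<v$ with $v^0(w)=v^0(v)$, i.e.\ a sibling of $v$ smaller than $v$.

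The main obstacle is precisely these exceptional terms. They arise when a term of $Y$ is obtained by collapsing a child-edge $g_j=e^i(w)$ (with $i\ne 0$) of such a sibling $w<v$ to its endpoint $\tau(g_j)=w$: the resulting cell contains $w$ as a vertex, so $e^0(v)$ becomes order-disrespecting and the cell need not be collapsible. To handle these I would not argue termwise but exploit cancellation: I would pair each offending $\tau(g_j)$-term with its companion $\iota(g_j)$-term in $Y$ (and, where needed, flow one additional step by $F$), and show that their images under $\pi F^\infty$ coincide and hence cancel mod $2$. To make this rigorous I would set up an induction on a complexity measure that $r$ and the exceptional collapses strictly decrease — for instance the sum of the labels of the vertices of a cell, which $r$ lowers and which bounds how far the flow can still travel — and invoke the inductive hypothesis to evaluate $\pi F^\infty$ on the paired terms. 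Verifying that this pairing is well-defined and parity-consistent, using only the combinatorics of blocking and the classification of cells, is the heart of the argument and the step I expect to be most delicate.
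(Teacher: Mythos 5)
A preliminary remark: the paper does not actually prove this lemma --- it is imported from \cite{FarleyHomology} --- so the internal benchmark is the proof of the analogous Lemma \ref{lemma:rIinfty} together with Corollaries \ref{cor:DeletedEdgeEndpoints} and \ref{cor:DeletedEdgeBoundary}. Your opening matches that template and is correct: $F(c)=r(c)+Y$ with $Y=\sum_{g\in E(c)}\bigl((Wc)_{\iota(g)}+(Wc)_{\tau(g)}\bigr)$, the identity $\pi F^{\infty}F=\pi F^{\infty}$ reduces the lemma to $\pi F^{\infty}(Y)=0$, and $\pi F^{\infty}$ does annihilate collapsible cells. The gap is your central claim that every term in which $e^0(v)$ stays order-respecting is collapsible because ``every vertex of such a term that is smaller than $v$ is blocked.'' That is false: blockedness is not inherited from $c$, because deleting the edge $g$ can unblock a vertex of $c$ that was blocked only by $g$ (a child of $\iota(g)$ in the $\tau(g)$-term, a child of $\tau(g)$ in the $\iota(g)$-term). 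Concretely, take the sufficiently subdivided tree with $\ast=0$, path $0$--$1$--$2$, branches $2$--$3$--$4$, $4$--$5$--$6$, $4$--$7$--$8$, $2$--$9$--$10$ (so $2$ and $4$ are essential), and $c=\{[4,7],5,9\}$, which is redundant with minimal unblocked vertex $v=9$. In the term $(Wc)_{\tau([4,7])}=\{4,5,[2,9]\}$ the edge $[2,9]$ is order-respecting, yet the vertex $4$ is now unblocked; this cell is redundant, not collapsible, it fails the hypothesis of Lemma \ref{lemma:BlockedInInterval}, and its flow reaches the critical cell $\{3,4,[2,9]\}$, so $\pi F^{\infty}$ of it is nonzero. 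Only the sum $(Wc)_{\tau([4,7])}+(Wc)_{\iota([4,7])}$ vanishes, because both summands reduce under $r$ to that same cell. So the pairing-and-induction you postpone as a delicate side issue is in fact the entire argument for every order-disrespecting edge of $c$, and you have not carried it out.

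Moreover, the exceptional case you do isolate is vacuous. If $\tau(g)=w$ is a sibling of $v$ with $w<v$, then $\iota(g)<v$, so redundancy of $c$ (with witness the minimal unblocked vertex $v$) forces $g$ to be order-disrespecting by Theorem \ref{thm:ClassificationOfCells}; this requires a vertex $u\in c$ with $v^0(u)=w$ and hence makes $w$ essential, while $v^0(w)=v^0(v)$ makes the common parent essential as well. Two adjacent essential vertices are excluded by sufficient subdivision once $n\ge 3$, and the configuration already uses three tokens, so it cannot arise. What your argument is missing is the strictly weaker vanishing criterion recorded as Lemma \ref{lemma:BlockedInInterval} --- only the vertices in the open interval $(\tau(e),\iota(e))$ need be blocked, not all vertices below $\iota(e)$ --- which disposes of the terms whose collapsed edge does not interact with $(v^0(v),v)$, combined with the reduce-to-a-common-cell cancellation of Corollary \ref{cor:DeletedEdgeBoundary}, run as an induction, for those that do. As written, the proposal rests on a false intermediate claim and defers the essential step, so it does not yet constitute a proof.
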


In Lemma \ref{lemma:rIinfty}, we give an analogous version of Lemma \ref{lemma:Rinfty} for certain types of cells in the configuration spaces for general graphs, where the reduction $r(c)$ is replaced with the ``initial reduction" $r_I(c)$ defined below.  Here, for vertices $x$ and $y,$ the interval $[x,y]$ denotes all vertices $z$ satisfying $x\le z\le y$ in the ordering of the vertices given by the spanning tree.

\begin{defn}\label{defn:InitialReduction}
Given a redundant cell $c$ of $\UD{n}{\Gamma},$ let $v$ be the minimal unblocked vertex of $c.$  If $v'=v^0(v),$ and for all deleted edges $\w{e}\in c,$ either 
\begin{enumerate}
\setlength\itemsep{0mm}
\item $[v',v]\subset [\tau(\w{e}),\iota(\w{e})],$ or 
\item $[v',v]\cap [\tau(\w{e}),\iota(\w{e})]=\emptyset,$ 
\end{enumerate}
define the initial reduction $r_I(c)$ to be the cell obtained by replacing $v$ with $v'.$  Otherwise, let $r_I(c)=c.$ In the latter case, we say $c$ is defective, and in the former, we say $v$ is non-defective.
\end{defn}
 Note that if a redundant cell $c$ is non-defective, then $r_I(c)=r(c).$  The following is stated in \cite{FarleyHomology} for the tree case; the proof generalizes to the general graph case, making use of essential spanning trees.  
\begin{lemma}[See Lemma 3.5(1) in \cite{FarleyHomology}]\label{lemma:BlockedInInterval}
Let $\Gamma$ be any graph with an essential spanning tree $T.$  If a cell $c$ of $\UD{n}{\Gamma}$ contains an order-respecting edge $e$ such that any vertex $v$ of $c$ which falls in $(\tau(e),\iota(e))$ is blocked, then $\pi F^\infty(c)=0.$
\end{lemma}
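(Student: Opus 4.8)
The plan is to reduce the statement to the collapsible case and then propagate the vanishing back along reductions. Since $c$ contains the order-respecting edge $e$, Theorem \ref{thm:ClassificationOfCells}(3) shows that $c$ cannot be critical, so $c$ is either collapsible or redundant. If $c$ is collapsible then $c\in\image(W)$, so $W(c)=0$ and $F(c)=c+\partial W(c)=c$, whence $F^\infty(c)=c$; as a collapsible cell is not critical, $\pi(c)=0$ and therefore $\pi F^\infty(c)=0$. This is the base case, and I would induct on the total vertex-label sum $\Phi(c)=\sum_u u$, taken over the vertices $u$ of $c$.

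Now suppose $c$ is redundant and satisfies the hypothesis with witness edge $e$, and let $v$ be the minimal unblocked vertex of $c$. If every vertex below $\iota(e)$ were blocked, then Theorem \ref{thm:ClassificationOfCells}(2) would make $c$ collapsible; hence there is an unblocked vertex below $\iota(e)$, and since the hypothesis forces every vertex of $(\tau(e),\iota(e))$ to be blocked and $v\ne\tau(e)$, we must have $v<\tau(e)$. I would then pass to the reduction $r(c)$, which replaces $v$ by $v'=v^0(v)<v$ and leaves all edges of $c$ unchanged, and check that $r(c)$ still satisfies the hypothesis with the \emph{same} edge $e$. Indeed, $e$ remains order-respecting: any vertex making $e$ order-disrespecting would be a child of $\tau(e)$ lying below $\iota(e)$, but no pre-existing vertex of $c$ is such a vertex (or $e$ would already be order-disrespecting in $c$) and the only vertex introduced, $v'$, satisfies $v'<\tau(e)$. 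Moreover, no vertex of $(\tau(e),\iota(e))$ becomes unblocked, since the reduction alters occupancy only at $v$ and $v'$, so the only vertices whose blocking can change are the children of $v$ (possibly unblocked) and the children of $v'$ (possibly blocked, which is harmless), and the contiguous-block structure of the depth-first vertex ordering shows that when $v<\tau(e)$ no child of $v$ can lie strictly between $\tau(e)$ and $\iota(e)$. As $\Phi$ strictly decreases under this move while the hypothesis is preserved, the inductive hypothesis applies to $r(c)$; granting the reduction invariance discussed below, iterating terminates at a collapsible cell and yields $\pi F^\infty(c)=\pi F^\infty(r(c))=0$.

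What makes the two ends of this argument meet is the invariance $\pi F^\infty(c)=\pi F^\infty(r_I(c))$ under the initial reduction of Definition \ref{defn:InitialReduction}, and establishing this is the main obstacle. Expanding $F(c)=c+\partial W(c)$ for redundant $c$ produces the reduced cell together with a collection of extra boundary terms, and one must show these extra terms contribute nothing to $\pi F^\infty$; in the tree case this is exactly Lemma \ref{lemma:Rinfty}. The new difficulty in a general graph is caused entirely by deleted edges, which could in principle both block vertices and render the naive reduction ``defective'' in the sense of Definition \ref{defn:InitialReduction}, so that $r(c)\ne r_I(c)$. This is precisely where the essential spanning tree hypothesis enters: by Lemma \ref{lemma:DeletedEdgeBlocks} a deleted edge never blocks a vertex (apart from the harmless case of $\ast$ and the vertex labeled $1$), so deleted edges play no role in the blocking relations governing the reduction and the tree-case bookkeeping of the extra terms carries over essentially verbatim. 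I expect the careful treatment of the defective vertices, together with the verification that the extra $F$-terms either cancel in pairs or again satisfy the hypothesis, to be the most delicate part of the write-up.
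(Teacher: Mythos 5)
First, a caveat about the comparison: the paper does not actually prove this lemma. It is imported from Farley's tree-case result (Lemma 3.5(1) of \cite{FarleyHomology}) with the one-line remark that the proof generalizes to graphs with essential spanning trees, so there is no in-paper argument to measure you against. Your combinatorial preparations are correct and do match the shape of Farley's induction: a collapsible cell $c$ has $W(c)=0$ and $\pi(c)=0,$ hence $\pi F^\infty(c)=0$; a cell containing an order-respecting edge cannot be critical; for a redundant $c$ satisfying the hypothesis the minimal unblocked vertex $v$ satisfies $v<\tau(e)$; and the depth-first labelling does show that $r(c)$ satisfies the hypothesis with the same witness edge $e.$

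The genuine gap is the step you yourself flag as ``the main obstacle'': the invariance $\pi F^\infty(c)=\pi F^\infty(r(c)).$ This is not bookkeeping to be deferred --- it is essentially the entire content of the lemma. Writing $F(c)=r(c)+\mathcal{C}$ with $\mathcal{C}=\sum_{e''\in E(c)}\left((Wc)_{\iota(e'')}+(Wc)_{\tau(e'')}\right),$ one must prove $\pi F^\infty(\mathcal{C})=0,$ and the cells of $\mathcal{C}$ do not fit the induction you set up: their relevant order-respecting edge is $e^0(v),$ not $e,$ and your measure $\Phi$ need not decrease on them (for instance $\Phi\bigl((Wc)_{\iota(e'')}\bigr)=\Phi(c)-v+\iota(e''),$ which can exceed $\Phi(c)$). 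One must also re-verify that every vertex of such a term lying in $\bigl(\tau(e^0(v)),v\bigr)$ remains blocked after the edge $e''$ is replaced by one of its endpoints --- exactly the place where deleted edges and the essential-spanning-tree hypothesis genuinely intervene --- and Farley's tree argument is a careful induction over precisely these auxiliary terms, so it does not ``carry over essentially verbatim.'' Finally, you cannot borrow the invariance from Lemma \ref{lemma:rIinfty}: in the paper that lemma is deduced from Corollary \ref{cor:DeletedEdgeEndpoints}, which is itself deduced from the lemma you are proving, so the appeal would be circular; and in any case Lemma \ref{lemma:rIinfty} applies only to cells consisting exclusively of deleted edges and vertices, whereas here $e$ may well be a tree edge. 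Until the vanishing of $\pi F^\infty(\mathcal{C})$ is established by an induction whose measure demonstrably decreases on those terms, the proof is incomplete.
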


\begin{cor}\label{cor:DeletedEdgeEndpoints}
Let $\Gamma$ be any graph with essential spanning tree $T$.  If $c$ is a non-defective redundant cell, and $\w{e}$ is a deleted edge in $c,$ then 
\[\pi F^\infty\left((Wc)_{\tau(\w{e})}\right)=\pi F^\infty\left((Wc)_{\iota(\w{e})}\right)=0.
\]
\end{cor}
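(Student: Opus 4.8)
The plan is to apply Lemma \ref{lemma:BlockedInInterval} to each of the cells $(Wc)_{\tau(\w{e})}$ and $(Wc)_{\iota(\w{e})}$, using as the order-respecting edge the tree edge $e^0(v)$ that $W$ inserts in place of the minimal unblocked vertex. Write $v$ for the minimal unblocked vertex of $c$ and $v'=v^0(v)$, so that $e^0(v)$ runs from $\tau(e^0(v))=v'$ to $\iota(e^0(v))=v$, with $v'<v$. Since $\w{e}$ is a deleted edge while $e^0(v)$ is a tree edge, and $v$ unblocked guarantees $e^0(v)$ meets nothing in $c$, the edge $e^0(v)$ survives unchanged in both modified cells; it is therefore a legitimate candidate, and its associated open interval is $(v',v)$. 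It then suffices to verify the two hypotheses of Lemma \ref{lemma:BlockedInInterval}: that $e^0(v)$ is order-respecting, and that every vertex lying in $(v',v)$ is blocked.

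First I would check that $e^0(v)$ is order-respecting in each cell. If it were not, there would be a vertex $u$ in the cell with $v^0(u)=v'$ and $u<v$. Such a $u$ is a child of $v'$, so $u>v'$, forcing $u\in(v',v)$. If $u$ already belongs to $c$, then minimality of $v$ makes $u$ blocked in $c$; since the only vertices incident to $e^0(u)$ are $u$ and $v'$, and closure-disjointness forbids an edge of $c$ from meeting the vertex $u$, this can only happen if $v'$ is occupied in $c$, contradicting the fact that $v$ is unblocked. The sole remaining possibility is that $u$ is the reinstated endpoint of $\w{e}$, but then $u\in(v',v)$, which the non-defective hypothesis will exclude. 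Thus $e^0(v)$ is order-respecting.

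The heart of the argument is the blocking hypothesis, where the non-defective condition and Lemma \ref{lemma:DeletedEdgeBlocks} enter. Non-defectiveness says $[v',v]$ is either contained in or disjoint from $[\tau(\w{e}),\iota(\w{e})]$; in the containment case $\tau(\w{e})\le v'$ and $v\le\iota(\w{e})$, and in the disjoint case both endpoints of $\w{e}$ lie outside $[v',v]$. In every instance the reinstated endpoint, namely $\tau(\w{e})$ for the first cell and $\iota(\w{e})$ for the second, fails to lie in the open interval $(v',v)$, so the only vertices of the modified cell inside $(v',v)$ are vertices already present in $c$. Each such vertex $u$ satisfies $u<v$ and is therefore blocked in $c$; it is not blocked by $v$, since the blocking vertex of $u$ would be $v^0(u)<u<v$, and by Lemma \ref{lemma:DeletedEdgeBlocks} it is not blocked by $\w{e}$ either. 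The one exception in that lemma, vertex $1$ when $\tau(\w{e})=\ast$, I would rule out by observing that $\tau(\w{e})=\ast$ together with $v'=\ast$ would make $v$ itself blocked by $\w{e}$, contrary to $v$ being unblocked; hence $v'\ne\ast$ and vertex $1\notin(v',v)$. Consequently, removing $\w{e}$ leaves each such $u$ blocked, while inserting $e^0(v)$ and the reinstated endpoint can only add blocking.

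With both hypotheses confirmed, Lemma \ref{lemma:BlockedInInterval} yields $\pi F^\infty\big((Wc)_{\tau(\w{e})}\big)=\pi F^\infty\big((Wc)_{\iota(\w{e})}\big)=0$, proving the corollary. I expect the main obstacle to be the bookkeeping around the basepoint $\ast$: the blocking conventions treat $\ast$ specially and the lone exceptional case in Lemma \ref{lemma:DeletedEdgeBlocks} must be excised by hand using that $v$ is unblocked. The remainder is a careful accounting of which cells the two local moves, inserting $e^0(v)$ and collapsing $\w{e}$, add or delete, together with the repeated use of the non-defective hypothesis to keep the reinstated endpoint out of the interval $(v',v)$.
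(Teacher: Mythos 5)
Your proof is correct and follows essentially the same route as the paper's: both apply Lemma \ref{lemma:BlockedInInterval} to the order-respecting edge $e^0(v)$ in the two modified cells, using non-defectiveness to keep the reinstated endpoint of $\w{e}$ out of $(v',v)$ and Lemma \ref{lemma:DeletedEdgeBlocks} (with the basepoint exception excised via $v$ being unblocked) to see that the vertices in $(v',v)$ stay blocked. Your treatment of the order-respecting check and of the vertex-$1$ exception differs only in minor bookkeeping from the paper's.
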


\begin{proof}
Let $v$ be the minimal unblocked vertex of $c,$ and let $e'=e^0(v),$ so that $Wc$ is obtained from $c$ by replacing $v$ with $e'.$ It is straightforward to check that the edge $e'$ is order-respecting in $Wc$.  Since $c$ is non-defective, it has the property that if $v'=\tau(e'),$ then for every deleted edge $\w{e}$ in $c$, we have either $[v',v]\subset [\tau(\w{e}),\iota(\w{e})],$ or $[v',v]\cap [\tau(\w{e}),\iota(\w{e})]=\emptyset,$ so we have the following possibilities for each deleted edge $\w{e}:$
\begin{enumerate}
\item $v'<v<\tau(\w{e})<\iota(\w{e})$
\item $\tau(\w{e})<\iota(\w{e})<v'<v$
\item $\tau(\w{e})<v'<v<\iota(\w{e})$
\end{enumerate}

Since $v$ is the minimal unblocked vertex, there are no unblocked vertices of $c$ in $(v',v),$ and therefore there are no unblocked vertices of $Wc$ in $(v',v).$  Since $T$ is an essential spanning tree, the only blocked vertices in $Wc$ which may become unblocked in $(Wc)_{\tau(\w{e})}$ (resp. $(Wc)_{\iota(\w{e})}$) are $\tau(\w{e})$ (resp. $\iota(\w{e})$ or the vertex labeled 1).  But, in all three cases above, we see that neither $\tau(\w{e})$ nor $\iota(\w{e})$ is in $(v',v).$  Furthermore, it is impossible that the vertex labeled 1 falls in $(v',v),$ since if it did, we would necessarily have $v'=\ast,$ but the only vertex $v$ for which $v^0(v)=\ast$ is the vertex labeled 1, but if $v$ is the vertex labeled 1, then $(v',v)=\emptyset.$  Therefore, any vertex of $(Wc)_{\tau(\w{e})}$ (resp. $(Wc)_{\iota(\w{e})}$) which is in $(v',v)$ must be blocked.  So, the edge $e'$ in the cell $(Wc)_{\tau(\w{e})}$ (resp. $(Wc)_{\iota(\w{e})}$) satisfies the conditions of Lemma \ref{lemma:BlockedInInterval}, so 
\[
\pi F^\infty\left((Wc)_{\tau(\w{e})}\right)=0=\pi F^\infty\left( (Wc)_{\iota(\w{e})}\right).\qedhere
\]
\end{proof}

Now, we prove an analogue of Lemma \ref{lemma:Rinfty} for the initial reduction $r_I:$
\begin{lemma}\label{lemma:rIinfty}

If $\Gamma$ is any graph with essential spanning tree $T$, and $c$ is a redundant cell in $\UD{n}{\Gamma}$ which consists exclusively of deleted edges and vertices,  we have $\pi F^\infty(c)=\pi F^\infty r_I(c).$
\end{lemma}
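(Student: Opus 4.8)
The plan is to split into two cases according to whether $c$ is defective. If $c$ is defective, then $r_I(c)=c$ by Definition \ref{defn:InitialReduction}, so the equality $\pi F^\infty(c)=\pi F^\infty r_I(c)$ is immediate. The content of the lemma therefore lies entirely in the non-defective case, where $r_I(c)=r(c),$ and I would prove $\pi F^\infty(c)=\pi F^\infty r(c)$ by mimicking Farley's argument for Lemma \ref{lemma:Rinfty}, replacing his appeal to the tree structure with Corollary \ref{cor:DeletedEdgeEndpoints}.

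The starting point is the identity $\pi F^\infty=\pi F^\infty F,$ which holds because $F^\infty(Fx)=F^\infty(x)$ for every chain $x$ (applying one extra $F$ before the map stabilizes cannot change the stable value). So it suffices to compute $F(c)=c+\partial(Wc)$ and track it through $\pi F^\infty.$ Since $c$ is redundant, $Wc\ne0$: writing $v$ for the minimal unblocked vertex of $c$ and $e'=e^0(v),$ the cell $Wc$ is obtained from $c$ by replacing $v$ with the tree edge $e'.$ In the cellular boundary $\partial(Wc)=\sum_e\bigl((Wc)_{\iota(e)}+(Wc)_{\tau(e)}\bigr),$ the two terms coming from $e'$ are $(Wc)_{\iota(e')}=c$ (since $\iota(e')=v$) and $(Wc)_{\tau(e')}=r(c)$ (since $\tau(e')=v^0(v)$). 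Thus over $\Zm{2}$ the term $c$ cancels and
\[
F(c)=r(c)+\sum_{\w{e}}\bigl((Wc)_{\iota(\w{e})}+(Wc)_{\tau(\w{e})}\bigr),
\]
where the sum is over the remaining edges of $Wc.$

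The crucial observation is that, because $c$ consists exclusively of deleted edges and vertices, every edge of $Wc$ other than the newly inserted tree edge $e'$ is a deleted edge $\w{e}$ of $c.$ Hence Corollary \ref{cor:DeletedEdgeEndpoints} applies to each summand: for a non-defective redundant cell, $\pi F^\infty\bigl((Wc)_{\iota(\w{e})}\bigr)=\pi F^\infty\bigl((Wc)_{\tau(\w{e})}\bigr)=0.$ Applying $\pi F^\infty$ to the displayed expression for $F(c)$ therefore annihilates the entire sum, leaving $\pi F^\infty(c)=\pi F^\infty F(c)=\pi F^\infty r(c)=\pi F^\infty r_I(c),$ as desired.

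I expect the only delicate points to be bookkeeping rather than substance: one must confirm that $e'=e^0(v)$ is genuinely a new tree edge (so that the $c$-terms cancel exactly and the surviving boundary terms are all deleted-edge faces), and that the hypothesis \emph{non-defective} is precisely the condition needed to invoke Corollary \ref{cor:DeletedEdgeEndpoints}. The main conceptual step, namely the vanishing of the deleted-edge boundary terms under $\pi F^\infty,$ has already been isolated into that corollary, so once the boundary computation is organized correctly the result follows.
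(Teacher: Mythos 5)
Your proposal is correct and follows essentially the same route as the paper's proof: reduce to the non-defective case where $r_I(c)=r(c),$ compute $F(c)=(1+\partial W)(c)$ so that the two faces of the inserted tree edge $e'$ contribute $c$ (cancelling mod 2) and $r(c),$ and then kill the remaining deleted-edge face terms with Corollary \ref{cor:DeletedEdgeEndpoints} before invoking $F^\infty F=F^\infty.$ The bookkeeping points you flag are exactly the ones the paper handles implicitly, and they go through as you expect.
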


\begin{proof} 
If $c$ is defective, there is nothing to prove, so assume $c$ is non-defective, so $r_I(c)=r(c).$  Let $v$ be the minimal unblocked vertex in $c,$ so $Wc$ is obtained from $c$ by replacing $v$ with $e'=e^0(v).$ Consider the boundary
\[
\partial Wc=\sum_{e\in E(Wc)}\left((Wc)_{\iota(e)}+(Wc)_{\tau(e)}\right).
\]

If $e=e',$ then $(Wc)_{\iota(e)}=c$ and $(Wc)_{\tau(e)}=r(c).$ So, 
\begin{align*}
F(c)=(1+\partial W)(c)&=c+r(c)+c+\!\!\sum_{e\in E(c)}\!\!\left((Wc)_{\iota(e)}+(Wc)_{\tau(e)}\right)
=r(c)+\mathcal{C},
\end{align*}
where $\mathcal{C}=\sum_{e\in E(c)}\left((Wc)_{\iota(e)}+(Wc)_{\tau(e)}\right).$ By Corollary \ref{cor:DeletedEdgeEndpoints}, $\pi F^\infty(\mathcal{C})=0,$ so
\[
\pi F^\infty F(c)=\pi F^\infty(r(c))+\pi F^\infty(\mathcal{C})=\pi F^\infty (r(c)).
\]
Since $F^\infty F(c)=F^\infty(c),$ the claim follows.
\end{proof}

\begin{defn}
For any edge $e\subset \Gamma,$ let $\wedge e$ denote the maximal vertex on the intersection of the $T$-geodesics from $\tau(e)$ to $\ast$ and from $\iota(e)$ to $\ast.$
\end{defn}
Note that if $e$ is an edge in $T$, then $\wedge e=\tau(e).$  

\begin{cor}\label{cor:DeletedEdgeBoundary}
Let $\Gamma$ be any graph with an essential spanning tree $T$ and let $c$ be a critical cell in $\Gamma$ consisting exclusively of deleted edges and no vertices. If for each pair of edges $\w{e},\w{e}'$ in $c$ we have $[\wedge \w{e},\iota(\w{e})]\cap [\tau(\w{e}'),\iota(\w{e}')]=\emptyset,$
then $\w\partial c=0.$
\end{cor}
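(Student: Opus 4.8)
The plan is to compute $\w\partial c=\pi F^\infty\partial c$ using Lemma \ref{lemma:SimplerBoundary} and to show that the terms of $\partial c$ pair up and cancel mod $2$ after applying $\pi F^\infty.$ Writing the deleted edges of $c$ as $\w{e}_1,\dots,\w{e}_k,$ the cellular boundary is $\partial c=\sum_{i=1}^k\bigl(c_{\iota(\w{e}_i)}+c_{\tau(\w{e}_i)}\bigr),$ where $c_{\iota(\w{e}_i)}$ (resp.\ $c_{\tau(\w{e}_i)}$) is the cell with a single vertex at $\iota(\w{e}_i)$ (resp.\ $\tau(\w{e}_i)$) together with the remaining deleted edges $\{\w{e}_j:j\ne i\}.$ I would prove, for each fixed $i,$ that $\pi F^\infty(c_{\iota(\w{e}_i)})=\pi F^\infty(c_{\tau(\w{e}_i)}),$ so that over $\Zm{2}$ the two contributions cancel and hence $\w\partial c=0.$

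The mechanism for this equality is the initial reduction $r_I$ of Definition \ref{defn:InitialReduction} together with Lemma \ref{lemma:rIinfty}, which is where the essential spanning tree hypothesis enters. I would argue that \emph{both} boundary cells reduce, one step at a time, to the common cell $d_i$ consisting of the single vertex $\wedge\w{e}_i$ and the edges $\{\w{e}_j:j\ne i\}.$ Set $A_i=[\wedge\w{e}_i,\iota(\w{e}_i)]$ and $B_j=[\tau(\w{e}_j),\iota(\w{e}_j)],$ so the hypothesis reads $A_i\cap B_j=\emptyset$ for $j\ne i.$ Since $\wedge\w{e}_i\le\tau(\w{e}_i)<\iota(\w{e}_i),$ both endpoints of $\w{e}_i$ lie in $A_i,$ and the $T$-geodesic from either of them down to their common ancestor $\wedge\w{e}_i$ stays inside $A_i.$ At a typical stage the cell has one vertex $w\in A_i$ with $w>\wedge\w{e}_i,$ and I would verify that $w$ is the minimal unblocked vertex and that the cell is non-defective. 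Both checks reduce to one observation: the tree edge $e^0(w)$ has its two endpoints $w$ and $v^0(w)$ in $A_i,$ and the interval $[v^0(w),w]$ governing non-defectiveness also lies in $A_i;$ a blocking shared endpoint with some $\w{e}_j,$ or a non-empty overlap $[v^0(w),w]\cap B_j,$ would place a point of $B_j$ inside $A_i,$ contradicting disjointness. Thus $r_I$ moves $w$ to $v^0(w),$ and by Lemma \ref{lemma:rIinfty} the value of $\pi F^\infty$ is unchanged.

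Iterating until the moving vertex reaches $\wedge\w{e}_i$ yields $\pi F^\infty(c_{\iota(\w{e}_i)})=\pi F^\infty(d_i)=\pi F^\infty(c_{\tau(\w{e}_i)}),$ with the degenerate case $\tau(\w{e}_i)=\wedge\w{e}_i$ being immediate since then $c_{\tau(\w{e}_i)}=d_i.$ Summing over $i$ mod $2$ then gives $\w\partial c=0.$ I expect the main obstacle to be the bookkeeping of the reduction process: checking at \emph{every} intermediate position that the single vertex is genuinely the minimal unblocked vertex, that the cell stays redundant and non-defective so that Lemma \ref{lemma:rIinfty} applies, and above all that the two geodesics terminate at the \emph{same} cell $d_i.$ The disjointness hypothesis $A_i\cap B_j=\emptyset$ is exactly the input that makes each of these verifications go through uniformly along the geodesic.
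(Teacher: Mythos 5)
Your proposal is correct and follows essentially the same route as the paper: decompose $\partial c$ into the pairs $c_{\tau(\w{e})}+c_{\iota(\w{e})},$ reduce both cells one vertex-step at a time along the $T$-geodesics to the common cell with its vertex at $\wedge\w{e},$ and use the disjointness hypothesis to see that each intermediate cell is non-defective (and its lone vertex unblocked) so that Lemma \ref{lemma:rIinfty} preserves $\pi F^\infty$ at every step. The only cosmetic difference is that the paper isolates the case $\tau(\w{e})=\ast$ (where $c_{\tau(\w{e})}$ is critical rather than redundant), which your degenerate case $\tau(\w{e})=\wedge\w{e}$ subsumes.
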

\begin{proof}
Recall the boundary $\w{\partial}$ is given by $\w{\partial}=\pi F^\infty\partial.$  For a cell $c$ as in the statement, the boundary $\partial c$ is of the form
\[
\partial c=\sum_{\w{e}\in E(c)}(c_{\tau(\w{e})}+c_{\iota(\w{e})}).
\]
Let $c'$ be a cell of the form $c_{\tau(\w{e})}$ or $c_{\iota(\w{e})}.$ If $\tau(\w{e})=\ast,$ and $c'=c_{\tau(\w{e})},$ then $c'$ is critical.  Otherwise, $c'$ is a redundant cell with minimal unblocked vertex $\tau(\w{e})$ or $\iota(\w{e}).$ Writing $r^0(c)=c$ and $r^j(c)=r(r^{j-1}(c))$ for $j>0,$ we see that there are integers $K$ and $L$ such that $r^K(c_{\tau(\w{e})})$ (resp. $r^L (c_{\iota(\w{e})})$) is obtained by moving $\tau(\w{e})$ (resp $\iota(\w{e})$) to $\wedge \w{e},$ so $r^K(c_{\tau(\w{e})})=r^L(c_{\iota(\w{e})}).$  

If $\tau(\w{e})\ne\ast,$ then the cells $r^k(c_{\tau(\w{e})})$ and $r^l(c_{\iota(\w{e})})$ for $k=0,\dots,K-1$ and $l=0,\dots,L-1$ are non-defective  by assumption, so $r_I^{k+1}(c_{\tau(\w{e})})=r^{k+1}(c_{\tau(\w{e})})$ and $r_I^{l+1}(c_{\iota(\w{e})})=r^{l+1}(c_{\iota(\w{e})})$ for each such $k$ and $l.$   It then follows from Lemma \ref{lemma:rIinfty} that 
\[
\pi F^\infty(c_{\tau(\w{e})})=\pi F^\infty(r^Kc_{\tau(\w{e})})=\pi F^\infty(r^Lc_{\iota(\w{e})})=\pi F^\infty(c_{\iota(\w{e})}).
\]
If $\tau(\w{e})=\ast,$ then $\pi F^\infty(c_{\tau(\w{e})})=c_{\tau(\w{e})}$ and $ r_I^Lc_{\iota(\w{e})}=c_{\tau(\w{e})}$ for some $L$ (since in this case, $\wedge\w{e}=\ast),$ so again $\pi F^\infty(c_{\tau(\w{e})})=\pi F^\infty(c_{\iota(\w{e})}).$
Therefore we have
\[
\w{\partial}c=\sum_{\w{e}\in E(c)}(\pi F^\infty c_{\tau(\w{e})}+\pi F^\infty c_{\iota(\w{e})})=0.\qedhere
\]
\end{proof}

The proof of Theorem \ref{thm:VertexDisjointUnordered} relies on a choice of spanning tree with certain properties.

\begin{lemma}\label{lemma:VertexDisjointSpanningTree}
Let $\nu=\nu(\Gamma),$ and let $\cycle_1,\dots,\cycle_\nu$ be vertex-disjoint cycles in $\Gamma.$  There exists an essential spanning tree $T\subset\Gamma$ and an embedding of $T$ in the plane with the following properties:
\begin{enumerate}
    \item Each cycle $\cycle_i$ contains exactly one deleted edge $\w{e}_i.$
    \item The edges $\w{e}_1,\dots,\w{e}_\nu$ satisfy $[\wedge\w{e}_i,\iota(\w{e}_i)]\cap
    [\wedge\w{e}_j,\iota(\w{e}_j)]=\emptyset$ for $i\ne j.$
\end{enumerate}
\end{lemma}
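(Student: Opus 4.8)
The plan is to construct the essential spanning tree $T$ explicitly by first committing to which edge of each $\cycle_i$ becomes the deleted edge $\w{e}_i,$ and then building the rest of $T$ around these choices. I would begin by choosing, within each cycle $\cycle_i,$ a single edge to delete, and I must do this so that the remaining graph is still connected; concretely, contract each cycle $\cycle_i$ to a point and pick a spanning tree of the resulting graph, then pull back to obtain a connected subgraph of $\Gamma$ that contains a spanning arc of each cycle but omits exactly one edge $\w{e}_i$ per cycle. Since the cycles are vertex-disjoint, these deletions are independent and do not interfere with one another. I would then extend this to a full spanning tree $T$ of $\Gamma$ by deleting further edges as needed from the part of $\Gamma$ outside the cycles; property (1) is then immediate by construction, namely that each $\cycle_i$ contains exactly the one deleted edge $\w{e}_i.$

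Next I would arrange for $T$ to be \emph{essential}, meaning each deleted edge has both endpoints of degree $2$ in $\Gamma.$ This is a subdivision issue: after fixing $T$ and the deleted edges, I would subdivide each deleted edge $\w{e}_i$ by inserting degree-$2$ vertices near each endpoint, so that the actual deleted edge sits between two degree-$2$ vertices. The excerpt already notes that any graph can be subdivided to admit an essential spanning tree, so I would invoke that observation and carry the subdivision through while preserving the cycle structure and the vertex-disjointness. The key point is that subdividing does not change $\nu(\Gamma)$ nor the homotopy type of the relevant configuration spaces, and it lets me satisfy the essential-spanning-tree hypothesis needed for Corollary \ref{cor:DeletedEdgeBoundary}.

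The main obstacle is property (2): ensuring that the intervals $[\wedge\w{e}_i,\iota(\w{e}_i)]$ are pairwise disjoint in the vertex ordering induced by $T$ and the chosen planar embedding. Here I have two degrees of freedom to exploit, namely the choice of planar embedding of $T$ (which determines the left-to-right traversal order, hence the labeling of vertices) and the freedom to further subdivide. My approach would be to observe that $\wedge\w{e}_i$ is the deepest common ancestor (in $T,$ rooted at $\ast$) of the two endpoints of $\w{e}_i,$ and that since the cycles are vertex-disjoint, the ``$T$-spans'' of distinct cycles occupy disjoint sets of vertices of $\Gamma.$ I would then choose the planar embedding so that the subtree hanging below each $\cycle_i$ is traversed as a contiguous block, which forces the interval $[\wedge\w{e}_i,\iota(\w{e}_i)]$ to lie within that block; disjointness of the blocks then yields disjointness of the intervals. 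If two cycles lie along a common path to $\ast$ so that their blocks would nest rather than sit side by side, I would further subdivide the connecting edges to separate the intervals, pushing the smaller labels of the deeper cycle out of the interval of the shallower one.

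The trickiest case to handle carefully is when $\wedge\w{e}_i$ is itself a vertex shared with the $T$-geodesic associated to another cycle $\cycle_j,$ i.e. when the meet points collide or when one cycle's span is an ancestor of another's. I expect this to be the crux of the argument: I would resolve it by exploiting the vertex-disjointness of the $\cycle_i$ together with an inductive ordering of the cycles by depth in $T,$ arguing that at the branching where two cycles diverge there is an essential vertex whose distinct outgoing directions separate the two intervals, so that after a suitable embedding and subdivision the intervals $[\wedge\w{e}_i,\iota(\w{e}_i)]$ become pairwise disjoint as required. Once both properties hold, the lemma is established and $T$ is ready to be fed into Corollary \ref{cor:DeletedEdgeBoundary} in the proof of Theorem \ref{thm:VertexDisjointUnordered}.
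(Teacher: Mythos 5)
Your construction of the spanning tree (one deleted edge per cycle, then extend) and the subdivision making it essential match the paper's opening steps. The genuine gap is in property (2), precisely at the point you flag as the crux. The fatal case is nesting: if $\cycle_j$ lies in a subtree of $T$ hanging off a vertex on one of the two arcs of $\cycle_i$ joining $\wedge\w{e}_i$ to the endpoints of $\w{e}_i,$ then the depth-first labelling visits every vertex of $\cycle_j$ strictly between $\wedge\w{e}_i$ and $\iota(\w{e}_i),$ so $[\wedge\w{e}_j,\iota(\w{e}_j)]\subset[\wedge\w{e}_i,\iota(\w{e}_i)].$ Your proposed remedy --- further subdividing connecting edges --- cannot repair this: subdivision inserts new vertices but never changes the relative order of the existing ones, so it cannot ``push'' the labels of $\cycle_j$ out of the interval of $\cycle_i.$ Your fallback (separating intervals at an essential vertex where two cycles ``diverge'') only treats cycles lying in sibling subtrees, not this ancestor/descendant configuration, which certainly occurs (e.g.\ in a chain of circles joined by arcs).

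The device you are missing, and which the paper uses, is that the deleted edge on each cycle must be re-chosen \emph{after} the embedding is fixed, so that the interval $[\wedge\w{e}_i,\iota(\w{e}_i)]$ consists of exactly the vertices of $\cycle_i$ and nothing else; disjointness of the intervals is then immediate from vertex-disjointness of the cycles, with no case analysis about nesting or blocks. Concretely, the paper first embeds $T'\cup\bigcup e'_i$ in the plane (planar by Kuratowski, since vertex-disjoint cycles rule out subdivided $K_5$ and $K_{3,3}$) with each cycle bounding an empty face, arranges the two arcs of $\cycle_i$ at $\wedge e'_i$ to lie in directions $1$ and $2,$ and then replaces the deleted edge by the edge $\w{e}_i$ of $\cycle_i$ with $\tau(\w{e}_i)=v^1(\wedge e'_i).$ Since $T$ is essential, $\tau(\w{e}_i)$ has degree $1$ in $T,$ so direction $1$ from $\wedge\w{e}_i$ dead-ends at once, and the traversal then runs along the other arc so that the vertices of $\cycle_i$ are labelled consecutively, ending at the leaf $\iota(\w{e}_i).$ Fixing the deleted edges once and for all at the outset, as you do, leaves intervals that can swallow arbitrary subtrees (including other cycles), and the argument does not close.
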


\begin{proof}
For each $i=1,\dots,\nu,$ choose an arbitrary edge $e'_i$ on $\cycle_i.$  We construct a spanning tree $T'$ by adding edges of $\Gamma$ as follows (starting with no edges).  First, we add each edge on each $\cycle_i$ except $e'_i.$  Then, chose an arbitrary ordering of the edges of $\Gamma$ which do not fall on any $\cycle_i.$  Inductively, add each edge if and only if it does not form a cycle.  This describes a spanning tree $T'$ with the property that each $\cycle_i$ contains exactly one deleted edge (the edge $e'_i$).  By subdividing if necessary, we can assume the endpoints of each $e'_i$ have degree 2 in $\Gamma,$ so that $T'$ is an essential spanning tree.

Since the cycles $\cycle_1,\dots,\cycle_\nu$ are vertex-disjoint, the graph $\Gamma'=T'\cup\bigcup e'_i$ does not contain a subgraph homeomorphic to either the complete graph $K_5$ or the complete bipartite graph $K_{3,3},$ so by Kuratowski's Theorem, the graph $\Gamma'$ is planar (see \cite{ChartrandLesniakZhang}, for example). It is clear that we may chose an embedding of $\Gamma'$ into the plane with the property that the interior of the region of the plane bounded by $\cycle_i$ does not contain any vertices of $\Gamma'.$  Choose such an embedding, and let $\ast$ be a vertex of degree 1 in $T'.$  If $\ast$ falls on some deleted edge $e'_i,$ we require it is chosen so that the path in $T'$ from $\ast=\tau(e'_i)$ to $\iota(e'_i)$ is travelled counterclockwise along $\cycle_i.$  With this choice, the vertices along this path form an interval $[\ast,\iota(e'_i)].$  See Figure \ref{fig:VertexDisjoint}.  Three vertex-disjoint cycles have been highlighted with bold lines.
\begin{figure}[htb]
    \centering
    \includegraphics[height=7cm]{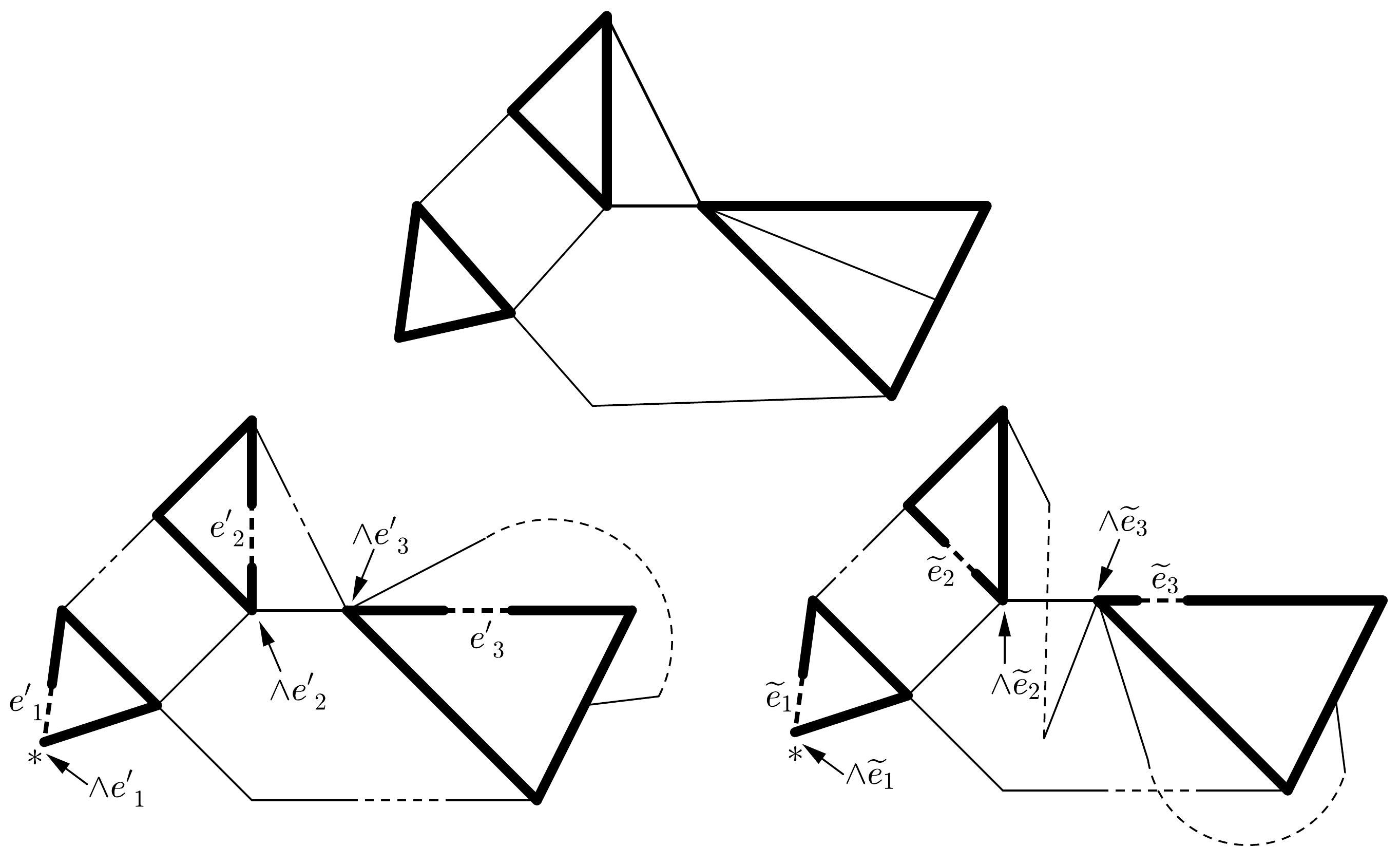}
    \caption{A graph $\Gamma$ (top); a choice of spanning tree $T'$ (bottom left); a choice of the spanning tree $T$ (bottom right)}
    \label{fig:VertexDisjoint}
\end{figure}

For each edge $e'_i$ which doesn't have $\ast$ as an endpoint, the vertex $\wedge e'_i$ is neither $\tau(e'_i)$ nor $\iota(e'_i),$ and it is clear that $\wedge e'_i$ must fall on $\cycle_i.$  We modify the embedding so that the two edges on $\cycle_i$ incident to $\wedge e'_i$ fall in directions 1 and 2 from $\wedge e'_i.$  Finally, we modify the spanning tree $T'$ by adding the edge $e'_i$ and removing the edge $\w{e}_i$ on $\cycle_i$ which has the property that $\tau(\w{e}_i)=v^1(\wedge e'_i).$  Again by subdividing if necessary, we can assume the endpoints of each $\w{e}_i$ have degree 2 in $\Gamma.$  This describes the spanning tree $T.$  By the choice of $T',$ each $\cycle_i$ contains exactly one deleted edge (the edge $\w{e}_i$).  By the choice of the embedding of $T$ and the edges $\w{e}_i,$ the vertices on the cycle $\cycle_i$ form an interval $[\wedge\w{e}_i,\iota(\w{e}_i)].$  Since the cycles are disjoint, these intervals are also disjoint.  Finally, we subdivide $T$ so that it is sufficiently subdivided; this has no effect on the property that the intervals $[\wedge\w{e}_i,\iota(\w{e}_i)]$ are disjoint.
\end{proof}

Now we describe the cells that satisfy the properties in Lemma \ref{lemma:DistinctHomologyClasses}.
\begin{defn}
Let $\Gamma$ be a graph with $\nu=\nu(\Gamma)\ge1.$  Choose a spanning tree $T$ and deleted edges $\w{e}_1,\dots,\w{e}_\nu$ as in Lemma \ref{lemma:VertexDisjointSpanningTree}. Let $n$ be an integer satisfying $1\le n\le\nu$ and consider a subset $R$ of $\{1,\dots,\nu\}$ with $|R|=n.$  Let $\Psi_{R}(\Gamma)$ denote the critical $n$-cell in $\UD{n}{\Gamma}$ consisting exclusively of the deleted edges $\w{e}_i$ for each $i\in R.$
\end{defn}

\begin{lemma}
The cells $\Psi_R(\Gamma)$ represent distinct classes in $H_*^{\mathcal{M}}(\UD{n}{\Gamma}).$
\end{lemma}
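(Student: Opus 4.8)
The plan is to show that the cells $\Psi_R(\Gamma)$ are all cycles in the Morse complex $M_*(\UD{n}{\Gamma})$ and then argue that they represent distinct homology classes by a dimension/support argument analogous to the one used in Lemma \ref{lemma:PhiPpty1}. First I would verify that each $\Psi_R(\Gamma)$ is a cycle, i.e.\ that $\w{\partial}\Psi_R(\Gamma)=0.$ The cell $\Psi_R(\Gamma)$ consists exclusively of the deleted edges $\w{e}_i$ for $i\in R$ (and no vertices), and by the choice of spanning tree in Lemma \ref{lemma:VertexDisjointSpanningTree}, these deleted edges satisfy $[\wedge\w{e}_i,\iota(\w{e}_i)]\cap[\wedge\w{e}_j,\iota(\w{e}_j)]=\emptyset$ for $i\ne j.$ Since $[\tau(\w{e}_j),\iota(\w{e}_j)]\subseteq[\wedge\w{e}_j,\iota(\w{e}_j)],$ this is exactly the disjointness hypothesis of Corollary \ref{cor:DeletedEdgeBoundary}, which immediately yields $\w{\partial}\Psi_R(\Gamma)=0.$ Thus every $\Psi_R(\Gamma)$ is an $n$-cycle in the Morse complex.

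Next I would argue that distinct cycles $\Psi_R(\Gamma)$ and $\Psi_{R'}(\Gamma)$ cannot be homologous. The cleanest route is to observe that no $\Psi_R(\Gamma)$ can be a Morse boundary. Each $\Psi_R(\Gamma)$ is a critical $n$-cell, so any homology it could kill would have to come from $\w{\partial}$ applied to a critical $(n+1)$-cell. A critical cell must have every edge order-disrespecting; with an essential spanning tree, each order-disrespecting edge is either a deleted edge or a tree edge whose initial endpoint $\tau$ is essential. I would show that, under the hypotheses $n\le\frac12\nu$ (or even $n\le\nu$, together with the constraint coming from $\beta_1$), there are in fact no critical cells of dimension $n+1,$ so that $M_{n+1}(\UD{n}{\Gamma})$ contributes nothing and each $n$-cycle is automatically a nonzero homology class. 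This mirrors the argument in Lemma \ref{lemma:PhiPpty1}, where the vanishing of $M_{m(\Gamma)+1}$ forced distinctness. The key input is a dimension bound: a critical cell can contain at most $\nu$ deleted edges (one per vertex-disjoint cycle, by the disjointness of the intervals), and I would need to check that no additional order-disrespecting tree edges can be adjoined without violating the cell condition $\overline{c}_i\cap\overline{c}_j=\emptyset$ for the given small number of robots.

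If instead the dimension bound does not outright kill $M_{n+1},$ the fallback is to compute directly: since the $\Psi_R(\Gamma)$ are distinct basis elements of $M_n(\UD{n}{\Gamma})$ supported on disjoint sets of deleted edges, I would show that the image of $\w{\partial}\from M_{n+1}\to M_n$ never contains a single $\Psi_R(\Gamma)$ nor any sum $\Psi_R(\Gamma)+\Psi_{R'}(\Gamma),$ by tracking which deleted-edge monomials can appear in $\w{\partial}=\pi F^\infty\partial$ of a critical $(n+1)$-cell. Because $\pi$ projects onto critical cells and the only critical cells with exactly $n$ deleted edges and no vertices are the $\Psi_R(\Gamma),$ this reduces to a combinatorial check on the reduction maps $r_I.$

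\emph{Main obstacle.} The hard part will be the dimension bound ruling out (or controlling) critical $(n+1)$-cells. Verifying that at most $\nu$ deleted edges can coexist in a single cell is the crux: I must use the vertex-disjointness of the cycles together with the essential-spanning-tree structure to argue that any additional order-disrespecting edge would either coincide with, or conflict with, the intervals $[\wedge\w{e}_i,\iota(\w{e}_i)],$ contradicting the separation condition on cells. Getting this bound tight enough to cover the regime $n\le\frac12\nu$—rather than a weaker range—is where the factor of $\tfrac12$ in Theorem \ref{thm:VertexDisjointUnordered} should enter, and reconciling it with the crude upper bound $K=\min\{n,\lfloor\frac{n+\beta}{2}\rfloor,m(\Gamma)\}$ from Theorem \ref{thm:TCUDnUB} will require care.
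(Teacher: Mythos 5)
Your first step is exactly the paper's argument: the disjointness of the intervals $[\wedge\w{e}_i,\iota(\w{e}_i)]$ from Lemma \ref{lemma:VertexDisjointSpanningTree} (together with $[\tau(\w{e}_j),\iota(\w{e}_j)]\subseteq[\wedge\w{e}_j,\iota(\w{e}_j)]$) puts $\Psi_R(\Gamma)$ in the scope of Corollary \ref{cor:DeletedEdgeBoundary}, so $\w{\partial}\Psi_R(\Gamma)=0$ and each $\Psi_R(\Gamma)$ is a cycle. Where your proposal goes astray is in the second step: the ``main obstacle'' you identify does not exist. You do not need any bound on how many deleted edges a critical cell can contain, nor any analysis of order-disrespecting tree edges, nor the fallback computation of $\w{\partial}$ on critical $(n+1)$-cells. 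A cell of $\UD{n}{\Gamma}$ is by definition a collection of $n$ vertices and edges, and its dimension is the number of edges in the collection, so $\UD{n}{\Gamma}$ has no cells of dimension $n+1$ at all --- critical or otherwise. Hence $M_{n+1}(\UD{n}{\Gamma})=0$ for free, the boundary map into $M_n$ is zero, and distinct cycles in $M_n$ (in particular the distinct basis elements $\Psi_R(\Gamma)$) represent distinct homology classes. This is the one-line observation the paper uses, in parallel with Lemma \ref{lemma:PhiPpty1}. Relatedly, the factor $\tfrac12$ in Theorem \ref{thm:VertexDisjointUnordered} has nothing to do with this lemma (which only needs $n\le\nu$); it enters only later, when the proof of the theorem needs two disjoint index sets $R_1,R_2\subseteq\{1,\dots,\nu\}$ of size $n$ each.
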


\begin{proof}
By Corollary \ref{cor:DeletedEdgeBoundary}, we have $\widetilde{\partial}(\Psi_R)=0.$  The lack of $(n+1)$-cells shows that each $\Psi_R$ represents a distinct homology class.
\end{proof}

\begin{lemma}
The equivalence class $[\Psi_R(\Gamma)]$ contains only the cell $\Psi_R(\Gamma),$ and is the least upper bound for the unique collection of equivalence classes of $n$ 1-cells which have $[\Psi_R(\Gamma)]$ as its least upper bound.
\end{lemma}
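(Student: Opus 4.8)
The plan is to exploit the fact that $\Psi_R(\Gamma)$ consists \emph{exclusively} of the $n$ deleted edges $\w{e}_i$, $i\in R$, and contains no vertices. This rigidity drives both assertions and makes the argument far shorter than the $S$-graph analogue in Lemma \ref{lemma:PhiPpty2}, where the presence of the vertices $v^1(v_i)$ and $w_j$ forced a delicate component-by-component analysis. I expect no serious obstacle; the only point requiring care is the elementary bookkeeping that an $n$-robot cell carrying $n$ distinct edges can carry no vertices at all, and is therefore determined by its edge set alone.

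First I would dispatch the claim that $[\Psi_R(\Gamma)]$ contains only $\Psi_R(\Gamma)$ directly from Definition \ref{defn:EquivClassesTree}. Suppose $c'\sim\Psi_R(\Gamma)$. Then $c'$ shares the edges of $\Psi_R(\Gamma)$, namely the $n$ deleted edges $\w{e}_i$, $i\in R$. Since $c'$ is a cell of $\UD{n}{\Gamma}$ it involves exactly $n$ robots, and all of these already lie on edges, so $c'$ contains no vertices whatsoever. Having the same edge set as $\Psi_R(\Gamma)$ and no vertices, $c'$ must equal $\Psi_R(\Gamma)$, giving $[\Psi_R(\Gamma)]=\{\Psi_R(\Gamma)\}$. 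Note this is strictly stronger than the conclusion in the tree and $S$-graph cases, where the class was only shown to contain $\Psi_R(\Gamma)$ together with redundant cells.

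For the second statement I would invoke property (3) of Corollary \ref{lemma:EquivPpties} to obtain the unique collection $\{[c_1],\dots,[c_n]\}$ of equivalence classes of $1$-cells having $[\Psi_R(\Gamma)]$ as an upper bound, where each $c_i$ may be taken to be the $1$-cell whose sole edge is $\w{e}_i$. To see that $[\Psi_R(\Gamma)]$ is the least such upper bound, let $[\Psi']$ be any upper bound of this collection. For each $i$ we have $[c_i]\le[\Psi']$, so by Definition \ref{defn:PartialOrderTree} the (invariant) edge set satisfies $E([c_i])\subseteq E([\Psi'])$; since $\w{e}_i$ is the unique edge of $[c_i]$, we get $\w{e}_i\in E(\Psi')$. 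As the $\w{e}_i$ are distinct, $E(\Psi')$ contains at least $n$ edges, whence $\dim(\Psi')\ge n$. But $\Psi'$ is a cell of $\UD{n}{\Gamma}$, so $\dim(\Psi')\le n$, forcing $E(\Psi')=\{\w{e}_i:i\in R\}$ and $\Psi'$ to contain no vertices. Exactly as in the first part this yields $\Psi'=\Psi_R(\Gamma)$, so $[\Psi']=[\Psi_R(\Gamma)]$. Thus $[\Psi_R(\Gamma)]$ is in fact the unique upper bound of the collection, and in particular its least upper bound. I would emphasize that, unlike the earlier Morse-boundary results, this argument uses neither the vertex-disjointness of the cycles nor the computations of Corollary \ref{cor:DeletedEdgeBoundary}; those enter only in the companion lemmas establishing that the $\Psi_R(\Gamma)$ are critical cycles representing distinct homology classes.
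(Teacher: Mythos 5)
Your proposal is correct and follows essentially the same route as the paper's (much terser) proof: both arguments rest on the observation that a cell of $\UD{n}{\Gamma}$ containing $n$ edges has no room for vertices, so it is determined by its edge set, which handles the first claim and forces any upper bound of the collection $\{[c_1],\dots,[c_n]\}$ to be $[\Psi_R(\Gamma)]$ itself. Your write-up merely makes explicit the bookkeeping the paper leaves implicit.
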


\begin{proof}
The first statement follows immediately from the fact that $\Psi_R(\Gamma)$ does not contain any vertices.  Furthermore, any equivalence class of $n$-cells which is an upper bound for the unique collection of equivalence classes of $n$ 1-cells which has $[\Psi_R(\Gamma)]$ as an upper bound must contain the $n$ edges $\w{e}_i,$ $i\in R,$ but $\Psi_R(\Gamma)$ is the only such cell in $\UD{n}{\Gamma}.$
\end{proof}

\begin{proof}[Proof of Theorem \ref{thm:VertexDisjointUnordered}]
The upper bound $\TC(\UD{n}{\Gamma})\le 2n+1$ is given in Theorem \ref{thm:TCUDnUB}, and to establish the lower bound, we will use Theorem \ref{thm:TCTools}. Let $R_1=\{1,2,\dots,n\}$ and $R_2=\{n+1,n+2,\dots,2n\},$ and let $\Psi_1=\Psi_{R_1}(\Gamma)$ and $\Psi_2=\Psi_{R_2}(\Gamma).$  Let $\{[c_1],\dots,[c_n]\}$ and $\{[c_{n+1}],\dots,[c_{2n}]\}$ be the unique collections of equivalence classes of 1-cells which have $[\Psi_1]$ and $[\Psi_2]$ as their least upper bounds.  Consider the product of zero-divisors
\begin{align}
\prod_{i=1}^{2n}\overline{\phi}_{[c_i]}=\prod_{i=1}^{2n}(\phi_{[c_i]}\otimes 1+1\otimes \phi_{[c_i]}).\label{eqn:ZDPsi}
\end{align}
Since the top dimension in cohomology is $n,$ this product is a sum of elements of the form 
\begin{equation}
\phi_{[c_{i_1}]}\cdots\phi_{[c_{i_n}]}\otimes\phi_{[c_{i_{n+1}}]}\cdots\phi_{[c_{i_{2n}}]},
\label{eqn:ZDPsib}
\end{equation}
where $\{i_1,\dots,i_{2n}\}=\{1,\dots,2n\}.$  If 
$
I=\{i_1,\dots,i_{n}\}$ and $J=\{i_{n+1},\dots,i_{2n}\},
$
then $[\Psi_I(\Gamma)]$ and $[\Psi_J(\Gamma)]$ are the least upper bounds for the collections  $\{[c_{i_1}],\dots,[c_{i_n}]\}$ and $\{[c_{i_{n+1}}],\dots,[c_{i_{2n}}]\}$ respectively, so the term in (\ref{eqn:ZDPsib}) equals $\phi_{[\Psi_I(\Gamma)]}\otimes\phi_{[\Psi_J(\Gamma)]}.$  Since the cells $\Psi_R(\Gamma)$ all represent distinct homology classes, we see the product (\ref{eqn:ZDPsi}) is non-zero, and the lower bounds then follow from Theorem \ref{thm:TCTools}.  
\end{proof}

We conclude with the following corollary regarding complete graphs.
\begin{cor}
Let $K_m$ denote the complete graph on $m\ge6$ vertices, and let $n$ be an integer satisfying $1\le n\le\left\lfloor\frac{m}{6}\right\rfloor.$  Then, $\TC(\UD{n}{\Gamma})=2n+1.$
\end{cor}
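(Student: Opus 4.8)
The plan is to derive this corollary as an immediate consequence of Theorem \ref{thm:VertexDisjointUnordered}, whose hypotheses are $\nu(\Gamma)\ge 2$ together with $1\le n\le\frac{1}{2}\nu(\Gamma)$. First I would recall the computation carried out in the discussion preceding Theorem \ref{thm:VertexDisjointOrdered}, where it is shown that $\nu(K_m)=\left\lfloor\frac{m}{3}\right\rfloor$. Since we assume $m\ge 6$, this immediately gives $\nu(K_m)=\left\lfloor\frac{m}{3}\right\rfloor\ge 2$, so the first hypothesis of Theorem \ref{thm:VertexDisjointUnordered} is satisfied (with $\Gamma=K_m$).

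Next I would verify that the bound $n\le\left\lfloor\frac{m}{6}\right\rfloor$ assumed in the corollary is no larger than the bound $\frac{1}{2}\nu(K_m)$ permitted by the theorem; equivalently, that $2\left\lfloor\frac{m}{6}\right\rfloor\le\left\lfloor\frac{m}{3}\right\rfloor$. This is a purely arithmetic check: writing $m=6q+r$ with $0\le r\le 5$, one has $\left\lfloor\frac{m}{6}\right\rfloor=q$ and $\left\lfloor\frac{m}{3}\right\rfloor=2q+\left\lfloor\frac{r}{3}\right\rfloor\ge 2q$, so that $2\left\lfloor\frac{m}{6}\right\rfloor=2q\le\left\lfloor\frac{m}{3}\right\rfloor=\nu(K_m)$, as required. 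Consequently any $n$ with $1\le n\le\left\lfloor\frac{m}{6}\right\rfloor$ also satisfies $1\le n\le\frac{1}{2}\nu(K_m)$.

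With both hypotheses confirmed, I would simply apply Theorem \ref{thm:VertexDisjointUnordered} to the graph $K_m$ to conclude $\TC(\UD{n}{K_m})=2n+1$, completing the proof.

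I expect essentially no substantive obstacle here: the statement is a direct specialization of Theorem \ref{thm:VertexDisjointUnordered}, and the only point requiring genuine attention is the elementary floor inequality of the second step, which confirms that the (slightly conservative) range $1\le n\le\left\lfloor\frac{m}{6}\right\rfloor$ in the corollary lies inside the admissible range $1\le n\le\frac{1}{2}\nu$ of the theorem.
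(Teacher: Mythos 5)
Your proposal is correct and matches the paper's proof, which likewise deduces the corollary directly from Theorem \ref{thm:VertexDisjointUnordered} together with the fact that $\nu(K_m)=\left\lfloor\frac{m}{3}\right\rfloor.$ The only difference is that you spell out the elementary floor inequality $2\left\lfloor\frac{m}{6}\right\rfloor\le\left\lfloor\frac{m}{3}\right\rfloor,$ which the paper leaves implicit.
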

\begin{proof}
This follows from Theorem \ref{thm:VertexDisjointUnordered} and the fact that $\nu(K_m)=\left\lfloor\frac{m}{3}\right\rfloor.$
\end{proof}

\bibliographystyle{plain}
\bibliography{TCUnorderedGraphConfigs}

\end{document}